\providecommand{\bysame}{\leavevmode\hbox to3em{\hrulefill}\thinspace}
\providecommand{\MR}{\relax\ifhmode\unskip\space\fi MR }
\providecommand{\href}[2]{#2}
\numberwithin{equation}{section}
\theoremstyle{plain}
\newtheorem{theorem}{Theorem}[section]
\newtheorem{proposition}[theorem]{Proposition}
\newtheorem{lemma}[theorem]{Lemma}
\newtheorem{corollary}[theorem]{Corollary}
\newtheorem*{thm*}{Theorem}
\newtheorem*{prop*}{Proposition}
\theoremstyle{definition}
\theoremstyle{remark}
\newtheorem{rem}[theorem]{Remark}
\newtheorem*{remark}{Remark}
\newtheorem*{ex}{Example}
\newcommand{\mbb}[1]{\mathbb{#1}}
\newcommand{\ol}[1]{\overline{#1}}
\newcommand{\lie}[1]{{\mathfrak{#1}}}
\DeclareMathOperator{\im}{Im}
\DeclareMathOperator{\re}{Re}
\DeclareMathOperator{\id}{id}
\DeclareMathOperator{\Ad}{Ad}
\DeclareMathOperator{\End}{End}
\DeclareMathOperator{\Aut}{Aut}
\DeclareMathOperator{\tr}{Tr}
\DeclareMathOperator{\rk}{rk}
\DeclareMathOperator{\codim}{codim}
\DeclareMathOperator{\Irr}{Irr}
\newcommand{\p}{\mathfrak{p}}
\newcommand{\g}{\mathfrak{g}}
\newcommand{\so}{\mathfrak{so}}
\newcommand{\su}{\mathfrak{su}}
\newcommand{\lisp}{\mathfrak{sp}}
\newcommand{\liu}{\mathfrak{u}}
\newcommand{\RR}{\mathbb{R}}
\newcommand{\CC}{\mathbb{C}}
\newcommand{\HH}{\mathbb{H}}
\newcommand{\SU}{\mathrm{SU}}
\newcommand{\Sp}{\mathrm{Sp}}
\newcommand{\U}{\mathrm{U}}
\newcommand{\OO}{\mathrm{O}}
\newcommand{\GL}{\mathrm{GL}}
\newcommand{\SO}{\mathrm{SO}}
\newcommand{\Spin}{\mathrm{Spin}}
\newcommand{\spin}{\mathfrak{spin}}
\newcommand{\f}{\mathfrak{f}}
\newcommand{\J}{\mathcal{I}_\mu}
\newcommand{\I}{\mathcal{I}}
\newcommand{\h}{\mathcal{I}_\lambda}
\author{Lisa Knauss}
\email{lisa.knauss@ruhr-uni-bochum.de}
\author{Christian Miebach}
\address{Univ.~Littoral C\^ote d'Opale, EA 2797 - LMPA - Laboratoire de 
math\'ematiques pures et appliqu\'ees Joseph Liouville, F-62228 Calais, 
France}
\email{christian.miebach@univ-littoral.fr}
\title[Spherical algebraic subalgebras]{Classification of spherical algebraic
subalgebras of real simple Lie algebras of rank 1}
\begin{document}

\setlength\arraycolsep{2pt}

\begin{abstract}
We determine all spherical algebraic subalgebras in any simple Lie algebra of
real rank $1$.
\end{abstract}

\maketitle

\section{Introduction}

Let $U^\mbb{C}$ be a complex reductive group with maximal compact subgroup $U$.
It has been proved in~\cite{Br} (see also~\cite{HW}) that smooth compact complex
spherical $U^\mbb{C}$-varieties $Z$ may be characterized by the fact that a
moment map $\mu\colon Z\to\lie{u}^*$ separates the $U$-orbits in $Z$.

In~\cite{HS} it has been shown that the so-called gradient maps are the right
analogue for moment maps when one is interested in actions of a real reductive
group $G=K\exp(\lie{p})$. Spherical gradient manifolds have been introduced
in~\cite{MS} in order to carry over Brion's theorem to the real reductive case.
To be more precise, we call a $G$-gradient manifold $X\subset Z$ with gradient map
$\mu_\lie{p}\colon X\to\lie{p}$ spherical if a minimal parabolic subgroup of $G$
has an open orbit in $X$. If $G$ is connected complex reductive, then a minimal
parabolic subgroup is the same as a Borel subgroup of $G$, so that there is no
ambiguity in this definition. The main result of~\cite{MS} states that $X$ is
spherical if and only if $\mu_\lie{p}$ almost separates the $K$-orbits in $X$.

Recently, real spherical manifolds have attracted attention from the 
representation theoretical view point (see~\cite{KO} and~\cite{KS2}) as well as
from a geometric one (see~\cite{KS} and~\cite{KKS}). In~\cite{Bien} 
and~\cite{KS} the authors have shown that, given a homogeneous real spherical
manifold $X=G/H$, any minimal parabolic subgroup of $G$ has only finitely many
orbits in $X$. Moreover, the paper~\cite{KS} contains the list of all reductive
spherical subalgebras of $\lie{g}=\lie{so}(n,1)$. In~\cite{Ma} the author has
found \emph{all} decompositions of $\lie{so}(n,1)$ as the sum of two
subalgebras. In~\cite{KKPS} the authors classify the reductive spherical
subalgebras of arbitrary simple real Lie algebras.

As the main result of this paper we describe the \emph{non-reductive} spherical 
algebraic subalgebras of $\lie{g}$ where $\lie{g}$ is a simple Lie algebra of 
real rank $1$ by methods in the spirit of~\cite{MS}. We then apply this result 
to classify the reductive spherical subalgebras of $\lie{g}$, thus obtaining a 
second proof of the rank one case in~\cite{KKPS}. Although a subalgebra 
$\lie{h}$ of $\lie{g}$ is spherical whenever $\lie{h}^\mbb{C}$ is a complex 
spherical subalgebra of $\lie{g}^\mbb{C}$, we would like to stress the fact that 
the converse is not true (see the example in Section~3). In particular one 
cannot reduce the question to the complex classification. In contrast to the 
complex case, there are continuous families of spherical subalgebras of 
$\lie{g}$. For $\lie{g}=\lie{su}(n,1)$ the geometry of such a family was studied 
in detail by means of an explicit slice model in~\cite{K}.

Let us outline the main steps of the proof as well as the organization of this
paper. In Section~2 we show that the homogeneous manifold $X=G/H$ admits a
$G$-gradient map if and only if $H$ is an algebraic subgroup of $G$. This is
the reason why we classify spherical \emph{algebraic} subalgebras. In Section~3
we characterize reductive and non-reductive spherical algebraic subgroups $H$ of
$G$ by the fact that a maximal compact subgroup of $H$ acts transitively on the
spheres in a certain representation related to the inclusion $H\hookrightarrow
G$. More precisely, our starting point is the following, see 
Propositions~\ref{Prop:redspherical} and~\ref{Prop:nonredspherical}.

\begin{proposition}\label{intro}
Let $G=K\exp(\p)$ be a connected simple Lie group of real rank $1$ with Iwasawa 
decomposition $G=KAN$. Let $M:=\mathcal{N}_K(\lie{a})$.
\begin{enumerate}[(1)]
\item Let $H=K_H\exp(\p_H)$ be a reductive algebraic subgroup of $G$. Then $H$
is spherical if and only if $K_H$ acts transitively on the connected components
of the spheres in $\p_H^\perp$.
\item Let $H=M_HA_HN_H$ be a non-reductive algebraic subgroup of $G$. If 
$N_H=N$, then $H$ is spherical. If $\dim N/N_H\geq1$, then $H$ is spherical if 
and only if $A_H=A$ and $M_H$ acts transitively on the connected components
of the spheres in $\lie{n}_H^\perp$.  
\end{enumerate}
\end{proposition}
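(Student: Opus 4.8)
\emph{Set‑up.} Two facts from the introduction are used throughout: $X=G/H$ carries a $G$‑gradient map $\mup\colon X\to\p$ precisely because $H$ is algebraic (Section~2), and $X$ is spherical if and only if $\mup$ almost separates the $K$‑orbits in $X$ (the main result of~\cite{MS}). In real rank one the second condition can be unwound completely: every non‑zero $\Ad(K)$‑orbit in $\p$ is a sphere, so $\p/\Ad(K)\cong[0,\infty)$ via $\xi\mapsto\|\xi\|$, and therefore ``$\mup$ almost separates the $K$‑orbits'' is equivalent to saying that on a dense open $K$‑invariant subset of $X$ the level sets of the single function $x\mapsto\|\mup(x)\|$ are exactly the $K$‑orbits. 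In particular the generic $K$‑orbit then has codimension one in $X$ (or $X$ has an open $K$‑orbit) \emph{and} the generic $K$‑orbits are told apart by the value of $\|\mup\|$. Equivalently, $H$ is spherical if and only if $\q_X+\lie h=\g$ for some unit vector $X\in\p$, where $\q_X=\bigoplus_{\lambda\ge0}\g^{\ad X}_\lambda$ is the minimal parabolic subalgebra attached to $X$: indeed $G=KAN$ with $AN$ normalising the standard minimal parabolic, so an open orbit of a minimal parabolic in $G/H$ amounts to a $G$‑conjugate $\q_X$ of it with $\q_X+\lie h=\g$, and these conjugates are parametrised by the unit sphere of $\p$.

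\emph{Reductive case.} Put $X=G/H$ in its Mostow form $X\cong K\times_{K_H}\p_H^\perp$: the base point $x_0=eH$ has $\mup(x_0)=0$, the maximal compact orbit $K/K_H$ is the zero fibre of $\mup$, and $\p_H^\perp\ni w\mapsto\exp(w)\,x_0$ is a normal slice, so the $K$‑orbits in $X$ correspond bijectively to the $K_H$‑orbits in $\p_H^\perp$ and $X/K\cong\p_H^\perp/K_H$. Under this correspondence the first clause above — generic $K$‑orbit of codimension $\le1$ — says exactly that the generic $K_H$‑orbit in $\p_H^\perp$ is open in its sphere, i.e. (when $\dim\p_H^\perp\ge2$, where spheres are connected) that $K_H$ acts transitively on the spheres of $\p_H^\perp$; for $\dim\p_H^\perp\le1$ the handful of cases is checked by hand, which is where the phrase ``connected components'' comes from. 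For the second clause one analyses $\mup$ on the slice: along a ray $c(t)=\exp(tW)x_0$ with $\|W\|=1$, $W\in\p_H^\perp$, the curve $c$ is the integral curve through $x_0$ of $\grad\mup^W$ (with $\mup^W=\langle\mup,W\rangle$), whence $\frac{d}{dt}\mup^W(c(t))=\|(\grad\mup^W)(c(t))\|^2$, which is strictly positive because $\grad\mup^W$ at $c(t)$ is the fundamental vector field of $W$ and this would vanish only if $W\in\lie h$ (using $\Ad(\exp(tW))W=W$), impossible since $\p_H\cap\p_H^\perp=0$. Combining this monotonicity with $K_H$‑equivariance of $\mup$ and the rank‑one structure, one checks that $\|\mup\|$ is a strictly increasing function of $\|w\|$ on the slice, so that $K_H$‑orbits lying over distinct radii are told apart by $\|\mup\|$. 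The two clauses together give~(1).

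\emph{Non‑reductive case.} A non‑reductive algebraic subgroup is, up to conjugacy, contained in a minimal parabolic $Q=MAN$; then $H=M_HA_HN_H$ with $M_H\le M$ (hence compact), $A_H\le A$, and $N_H$ normal in $N$, and since $\dim A=1$ one has $A_H=1$ or $A_H=A$. The normal space to the maximal compact orbit $K/M_H$ at $eH$ is $\g/(\lie k+\lie h)\cong(\lia/\lia_H)\oplus(\lie n/\lie n_H)$, on which $M_H$ acts trivially on the first summand because it centralises $\lia$. If $A_H=1$ this slice is $\lia\oplus\lie n_H^\perp$, of dimension $\ge3$ by hypothesis, and $M_H$ fixes the line $\lia$ in it, hence is not transitive on its spheres; arguing exactly as in the reductive case, $X=G/H$ is then not spherical, so $A_H=A$ is forced. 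Once $A_H=A$ the slice is $\lie n/\lie n_H\cong\lie n_H^\perp$ with $M_H$ acting linearly, $X/K\cong\lie n_H^\perp/M_H$, the spheres of $\lie n_H^\perp$ are connected ($\dim\lie n_H^\perp\ge2$), and the level‑set analysis of the reductive case identifies sphericity with transitivity of $M_H$ on the spheres of $\lie n_H^\perp$. This gives~(2).

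\emph{The hard part.} The routine ingredients are the Mostow/Iwasawa slice descriptions, the correspondence of orbit spaces, and the monotonicity displayed above, which is immediate from the defining property of $\mup$. The real work is the remaining point in both parts — that $\mup$ restricted to the slice is essentially radial, so that $\|\mup\|$ genuinely separates the $K$‑orbits lying over distinct $K_H$‑ (resp.\ $M_H$‑) orbits rather than merely being non‑constant; this forces one to use the rank‑one structure (the $\lie{sl}_2$‑triple attached to a unit vector of $\p$, which makes the $\ad$‑eigenspace picture maximally rigid) and, for $\su(n,1)$, $\lisp(n,1)$ and $\f_4=\f_{4(-20)}$, to control the contribution of the long root space $\g_{2\alpha}$. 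In part~(2) the one extra subtlety is the separate argument forcing $A_H=A$, which is why the hypothesis $\dim\lie n_H^\perp\ge2$ is imposed.
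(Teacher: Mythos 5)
Your reduction of part~(1) to the Mostow model $X\cong K\times_{K_H}\p_H^\perp$ and the identification of $K$-orbits in $X$ with $K_H$-orbits in $\p_H^\perp$ is the right start, but the step you yourself flag as ``the hard part'' --- that $\|\mup\|$ restricted to the slice is a strictly increasing function of $\|w\|$ alone, so that $K$-orbits lying over distinct spheres of $\p_H^\perp$ are told apart --- is asserted, not proved. The monotonicity computation $\tfrac{d}{dt}\mup^W(c(t))=\|\grad\mup^W(c(t))\|^2>0$ only controls the component of $\mup$ in the single direction $W$ along the single ray $c$; it does not show that $\|\mup(\exp(tW)x_0)\|$ is independent of the unit vector $W\in\p_H^\perp$, and $K_H$-equivariance cannot supply that independence, since transitivity of $K_H$ on the spheres is precisely what is being characterized. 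The gap is avoidable: the Mostow model carries the explicit gradient map $\mup[k,\xi]=\Ad(k)\xi$, and since sphericity may be tested on any one gradient map, the whole question becomes whether the map $\p_H^\perp/K_H\to\p/K$ induced by the inclusion has discrete fibres; the fibre over $K\cdot\xi$ is literally $\bigl((K\cdot\xi)\cap\p_H^\perp\bigr)/K_H$, a sphere in $\p_H^\perp$ modulo the compact group $K_H$, and its discreteness is equivalent to transitivity on connected components. This is the paper's proof; no radial analysis, no $\lie{sl}_2$-triple and no control of $\g_{2\alpha}$ enters the reductive case at all.

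The more serious gap is in part~(2). You transplant ``the level-set analysis of the reductive case'' to the slice $\lie{n}_H^\perp$, but that analysis lives entirely inside $\p$: the directions $W$ were elements of $\p$, $\mup^W$ was the function attached to $W\in\p$, and $\exp(tW)x_0$ was its gradient trajectory. For non-reductive $H$ the slice $\lie{n}_H^\perp$ sits in $\lie{n}$, not in $\p$; there is no Mostow decomposition, $\mup^W$ is undefined for $W\in\lie{n}$, and the value of $\mup$ at $\exp(\nu)H$ for nilpotent $\nu$ bears no simple relation to $\|\nu\|$. So neither the equivalence ``spherical $\iff$ $M_H$ transitive on the spheres of $\lie{n}_H^\perp$'' nor the preliminary step forcing $A_H=A$ is actually established by your argument. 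The paper proves (2) without any gradient map: $G/H\to G/Q_0$ is a $G$-equivariant fibre bundle, the opposite parabolic $Q_0^-$ has the Bruhat cell as its unique open orbit in $G/Q_0$, and unwinding the twisted product $G\times_{Q_0}(Q_0/H)$ shows that $X$ is spherical if and only if $MA=Q_0\cap Q_0^-$ has an open orbit in $Q_0/H\cong(MA)\times_{L_H}(N/N_H)$, i.e.\ if and only if $L_H=M_HA_H$ has an open orbit in $\lie{n}_H^\perp$. Compactness of $M_H$ then forces $A_H=A$, and since every $A$-orbit meets every sphere exactly once, an open $L_H$-orbit amounts to an open --- hence, by compactness and connectedness when $\dim\lie{n}_H^\perp\geq2$, full --- $M_H$-orbit in each sphere. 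If you wish to salvage your route you would have to compute a gradient map explicitly along $\exp(\lie{n}_H^\perp)\cdot eH$ and show it almost separates the $K$-orbits, which is considerably harder than the Bruhat-cell argument.
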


Suppose that $H=M_HA_HN_H$ is a non-reductive spherical algebraic subgroup of 
$G$ with $\dim N/N_H\geq1$. Let us write 
$\lie{n}=\lie{g}_\alpha\oplus\lie{g}_{2\alpha}$ where 
$\lie{g}_\alpha$ and $\lie{g}_{2\alpha}$ are the restricted root spaces with 
respect to the maximal Abelian subspace $\lie{a}$ of $\lie{p}$. Let us fix an 
$M$-invariant scalar product on $\lie{g}_\alpha$. Then the Lie algebra of 
$H=M_HA_HN_H$ is of the form $\lie{h}=\lie{m}_H\oplus\lie{a}\oplus 
W^\perp\oplus\lie{g}_{2\alpha}$ where $W\subset\lie{g}_\alpha$ is an 
$M_H$-stable subspace such that $M_H$ acts transitively on the connected 
components of the spheres in $W$, see 
Corollary~\ref{Cor:nonredfinestructure}. Thus, in a second step we will 
determine the subspaces $W$ of $\lie{g}_\alpha$ such that $\mathcal{N}_M(W)$,
which contains $M_H$, acts irreducibly on $W$. It turns out that in many cases 
this already implies that $\mathcal{N}_M(W)$ acts transitively on the connected 
components of the spheres in $W$. In the final step, we make use of Onishchik's 
classification of transitive actions on spheres in order to find all subgroups 
of $\mathcal{N}_M(W)$ that still act transitively on the connected components 
of the spheres in $W$. More details of this general scheme are given in 
Section~4. In the remaining Sections~5 to~8 we carry out our program case by 
case for $\lie{so}(n,1)$, $\lie{su}(n,1)$, $\lie{sp}(n,1)$ and $\lie{f}_4$. The 
tables containing all spherical algebraic subalgebras are given 
in Theorems~\ref{NonredOrthogonal} and~\ref{ReductiveOrthogonal} for 
$\lie{g}=\lie{so}(n,1)$, in Theorems~\ref{nonredUnitary} 
and~\ref{ReductiveUnitary} for $\lie{g}=\lie{su}(n,1)$, in 
Theorems~\ref{NonredSympl} and~\ref{ReductiveSymplectic} for 
$\lie{g}=\lie{sp}(n,1)$, and in Theorems~\ref{ReductiveExceptional} 
and~\ref{NonredExc} for the exceptional Lie algebra 
$\lie{g}=\lie{f}_4=\lie{f}_{4(-20)}$.

After this paper was finished, we learned that Kimelfeld has considered the
classification problem of algebraic spherical subgroups in real simple Lie groups
$G=K\exp(\lie{p})$ of rank~$1$, too. In~\cite{Ki} he obtains Proposition~\ref{intro}
by differential geometric arguments based on the Karpelevich compactification of the
hyperbolic space $G/K$ and then gives a general description of all spherical
algebraic subgroups of $G$. However, he does not provide an explicit list of all
spherical algebraic subalgebras of $\lie{g}$. Bien obtains an explicit but not 
complete list in \cite{Bien}.

\emph{Acknowledgements.} We would like to thank Peter Heinzner and Valdemar
Tsanov for helpful discussions on the subject presented here. We are much obliged to
Friedrich Knop for informing us about Kimelfeld's paper~\cite{Ki} as well as about an
inaccuracy in the statement of Theorem~\ref{NonredOrthogonal} in an earlier
version of this manuscript. The first author 
gratefully acknowledges the financial support by SPP 1388 ``Darstellungstheorie'' 
and SFB/TR 12 ``Symmetries and Universality in Mesoscopic Systems'' of the DFG.

\section{(Homogeneous) gradient manifolds}

Let $G$ be a connected semisimple Lie group that embeds as a closed subgroup 
into its universal complexification $G^\mbb{C}$. Let $U$ be a compact real form 
of $G^\mbb{C}$ such that $G$ is stable under the corresponding Cartan involution 
of $G^\mbb{C}=U^\mbb{C}$. Then we obtain the Cartan decomposition 
$K\times\lie{p}\to G$, $(k,\xi)\mapsto k\exp(\xi)$, where $K:=G\cap
U$ is a maximal compact subgroup of $G$ and where $\lie{p}:=\lie{g}\cap
i\lie{u}$.

By a \emph{$G$-gradient manifold} we mean the following. Let $Z$ be a K\"ahler
manifold endowed with a holomorphic $G^\mbb{C}$-action and $U$-invariant 
K\"ahler form $\omega$ such that there exists a $U$-equivariant moment map
$\mu\colon Z\to\lie{u}^*$ for the $U$-action on $(Z,\omega)$. We call such a $Z$
a \emph{Hamiltonian $G^\mbb{C}$-manifold}. Basic examples are given by
$G^\mbb{C}$-stable complex submanifolds of some projective space $\mbb{P}(V)$
where $V$ is a finite-dimensional complex $G^\mbb{C}$-representation space. In
particular, homogeneous algebraic $G^\mbb{C}$-varieties are Hamiltonian.
Identifying $\lie{u}^*$ with $i\lie{u}$ and composing $\mu$ with the orthogonal
projection to $\lie{p}\subset i\lie{u}$ with respect to a $U$-invariant inner
product, we obtain the $K$-equivariant \emph{gradient map} $\mu_\lie{p}\colon
Z\to\lie{p}$. Any $G$-stable closed real submanifold $X$ of $Z$ is called a
$G$-gradient manifold with gradient map $\mu_\lie{p}|_X$. For more details and
the basic properties of gradient maps we refer the reader to~\cite{HS}
and~\cite{HSS}.

\begin{proposition}
Let $G$ be a connected semisimple Lie group and let $H$ be a closed subgroup of
$G$. Then $X=G/H$ is a $G$-gradient manifold if and only if $H$ is algebraic,
i.e., if $\ol{H}\cap G=H$ holds for the Zariski closure $\ol{H}$ of $H$ in
$G^\mbb{C}$. 
\end{proposition}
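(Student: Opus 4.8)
The plan is to prove both implications by comparing the $G$-action on $G/H$ with the $G^{\mbb C}$-action on a natural algebraic completion. For the ``if'' direction, suppose $H$ is algebraic, so that the Zariski closure $H^{\mbb C}:=\ol{H}$ in $G^{\mbb C}$ satisfies $H^{\mbb C}\cap G=H$. Then $G^{\mbb C}/H^{\mbb C}$ is a quasi-affine (in fact affine, after a suitable chevalley embedding) algebraic $G^{\mbb C}$-variety, hence a Hamiltonian $G^{\mbb C}$-manifold: it embeds $G^{\mbb C}$-equivariantly into $\mbb P(V)$ for a finite-dimensional $G^{\mbb C}$-representation $V$, and moment maps exist on such spaces by the discussion preceding the statement. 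The condition $H^{\mbb C}\cap G=H$ guarantees that the natural map $G/H\to G^{\mbb C}/H^{\mbb C}$ is injective, and since $G$ is closed in $G^{\mbb C}$ and $H$ is closed in $G$, the image is a closed $G$-stable real submanifold $X$ of $Z:=G^{\mbb C}/H^{\mbb C}$ (one must check that the orbit $G\cdot eH^{\mbb C}$ is closed in $Z$; this follows because $G$ is a closed subgroup of $G^{\mbb C}$ and the orbit map is proper onto its image when the stabilizer intersection is as stated). Restricting $\mu_{\lie p}$ to $X$ exhibits $G/H$ as a $G$-gradient manifold.

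For the ``only if'' direction, suppose $X=G/H$ is realized as a $G$-stable closed real submanifold of a Hamiltonian $G^{\mbb C}$-manifold $Z$, with base point $z_0\in Z$ whose $G$-stabilizer is $H$. Let $H^{\mbb C}$ denote the $G^{\mbb C}$-stabilizer of $z_0$ in $G^{\mbb C}$; since $Z$ carries a holomorphic $G^{\mbb C}$-action, $H^{\mbb C}$ is a closed complex subgroup of $G^{\mbb C}$. The point is that $H^{\mbb C}$ is algebraic: any isotropy group of a holomorphic action of a linear algebraic group on a (quasi-)projective variety is Zariski closed, and $Z$ is locally of this form near $z_0$ because the $U$-moment map lets one realize a $G^{\mbb C}$-invariant neighbourhood $G^{\mbb C}\cdot z_0$ as a quasi-affine homogeneous space (alternatively invoke that $G^{\mbb C}\cdot z_0\cong G^{\mbb C}/H^{\mbb C}$ is a Hamiltonian $G^{\mbb C}$-manifold, hence quasi-projective algebraic). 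Then $H=G\cap H^{\mbb C}$ by construction, so $H$ equals the intersection of $G$ with a Zariski-closed subgroup; it remains to see that $H^{\mbb C}$ is in fact the Zariski closure of $H$, or at least that $\ol H\cap G=H$. Since $\ol H\subseteq H^{\mbb C}$ and $H\subseteq\ol H$, we get $H\subseteq\ol H\cap G\subseteq H^{\mbb C}\cap G=H$, giving equality. Hence $H$ is algebraic.

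The main obstacle is the careful bookkeeping in the ``only if'' direction: one must ensure that the $G^{\mbb C}$-orbit through $z_0$ is genuinely a Hamiltonian $G^{\mbb C}$-manifold in the sense defined above (so that its isotropy is algebraic), rather than merely a complex homogeneous space with no algebraicity a priori. This is where the hypothesis that $Z$ is Kähler with a $U$-moment map is essential: combined with properness/norm-square arguments for the gradient map, it forces the complex orbit to be quasi-affine, and quasi-affine homogeneous spaces of $G^{\mbb C}$ have algebraic stabilizers (for instance by Chevalley's theorem realizing $G^{\mbb C}/H^{\mbb C}$ inside a $G^{\mbb C}$-module). The ``if'' direction is comparatively routine once one recalls that affine homogeneous $G^{\mbb C}$-varieties are Hamiltonian and that the real form $G/H$ sits inside as a closed Lagrangian-type submanifold; the only subtlety there is closedness of the $G$-orbit, which one handles via the Cartan decomposition $G=K\exp(\lie p)$ and the fact that $K$ is compact.
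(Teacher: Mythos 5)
Your proposal follows the same two-step route as the paper: for the ``if'' direction you embed $G/H$ into $G^\mbb{C}/\ol{H}$, which is K\"ahler because it is quasi-projective by Chevalley's theorem, and restrict the canonical moment map; for the ``only if'' direction you show that the full complex isotropy group $H^\mbb{C}=(G^\mbb{C})_{z_0}$ is algebraic and then run the chain $H\subset\ol{H}\cap G\subset H^\mbb{C}\cap G=H$, which is exactly the paper's closing argument. So the architecture is right; the problems lie in two of your supporting claims.

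First, $G^\mbb{C}/\ol{H}$ is quasi-projective, but in general it is neither affine nor quasi-affine: if $H$ is a minimal parabolic subgroup of a split $G$ (which does satisfy $\ol{H}\cap G=H$), then $\ol{H}$ is a Borel subgroup of $G^\mbb{C}$ and $G^\mbb{C}/\ol{H}$ is a projective flag manifold. The same example shows that in the ``only if'' direction the complex orbit $G^\mbb{C}\cdot z_0$ need not be quasi-affine, so your main proposed mechanism for algebraicity of $H^\mbb{C}$ --- that the moment map forces the complex orbit to be quasi-affine --- is false as stated, and no norm-square argument will rescue it. The statement that actually does the work, and the one the paper invokes, is that a homogeneous K\"ahler manifold of a semisimple complex Lie group has algebraic isotropy (\cite[Corollary~4.12]{GMO}); your parenthetical alternative (``Hamiltonian, hence quasi-projective algebraic'') is essentially a restatement of that nontrivial theorem rather than a proof of it, so it must be cited, not asserted. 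Second, your worry about closedness of the $G$-orbit in $G^\mbb{C}/\ol{H}$ is legitimate (the paper is silent on it), but ``the orbit map is proper onto its image when the stabilizer intersection is as stated'' is not an argument; the clean way to get closedness is to observe that $\ol{H}$ is stable under the complex conjugation of $G^\mbb{C}$ with respect to $G$, so the real points of $G^\mbb{C}/\ol{H}$ form a closed subset consisting of finitely many $G$-orbits, each of which is open and therefore also closed in that set.
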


\begin{proof}
Suppose first that $\ol{H}\cap G=H$ holds. It follows that $X=G/H$ is a real
submanifold of the homogeneous space $G^\mbb{C}/\ol{H}$. Since the latter is
quasi-projective, it is in particular K\"ahler, and since $G^\mbb{C}$ is
semisimple, there exists a unique $U$-equivariant moment map on
$G^\mbb{C}/\ol{H}$. Then the construction described above yields a $G$-gradient
map on $X=G/H$.

If $X=G/H$ is a gradient manifold, then by definition there exists a
$G$-equivariant diffeomorphism $G/H\cong G\cdot z\subset Z$ where $Z$ is a
Hamiltonian $G^\mbb{C}$-manifold. From this we obtain $G/H\hookrightarrow
G^\mbb{C}/(G^\mbb{C})_z$, and $G^\mbb{C}/(G^\mbb{C})_z$ is again K\"ahler.
Therefore $(G^\mbb{C})_z$ is an algebraic subgroup of $G^\mbb{C}$,
see~\cite[Corollary~4.12]{GMO}, hence contains $\ol{H}$. This implies
$\ol{H}\cap
G\subset (G^\mbb{C})_z\cap G=G_z=H$, as was to be shown.
\end{proof}

\section{Characterization of spherical homogeneous gradient manifolds}

As in the previous section let $G=K\exp(\lie{p})$ be a connected semisimple Lie
group which embeds into its complexification $G^\mbb{C}$. Let $H\subset G$ be a 
closed subgroup such that $X=G/H$ is a $G$-gradient manifold with gradient map 
$\mu_\lie{p}\colon X\to\lie{p}$. We say that $X=G/H$ is \emph{spherical} if a 
minimal parabolic subgroup of $G$ has an open orbit in $X$. In this case we call 
$H$ a spherical subgroup of $G$ and $\lie{h}$ a spherical subalgebra of 
$\lie{g}$. As shown in~\cite{MS} sphericity of $X$ is equivalent to the fact 
that $\mu_\lie{p}$ almost separates the $K$-orbits in $X$, i.e., that the map 
$X/K\to\lie{p}/K$ induced by $\mu_\lie{p}\colon X\to\lie{p}$ has discrete
fibers.

\begin{ex}
Any symmetric subalgebra of $\lie{g}$ is spherical in $\lie{g}$,
see~\cite[\S6.1]{MS}.
\end{ex}

It is not hard  to see that, if $\lie{h}^\mbb{C}$ is spherical in $\lie{g}^\mbb{C}$ 
in the usual sense, then $\lie{h}$ is spherical in $\lie{g}$. However, as the 
following examples show, the converse does not hold, i.e., there are more 
spherical subalgebras of $\lie{g}$ than just real forms of complex spherical 
subalgebras of $\lie{g}^\mbb{C}$.

\begin{ex}
The unipotent radical $N$ of a minimal parabolic subgroup of $G$ is always
spherical in $G$. However, in general $N^\mbb{C}$ is not a spherical subgroup of
$G^\mbb{C}$. As a concrete example one may take $G={\rm{SO}}^\circ(5,1)$.

A semisimple example is given by the spherical subgroup $H={\rm{Sp}}(1,1)$ of
$G={\rm{Sp}}(2,1)$ (see Theorem~\ref{ReductiveSymplectic}) since $H^\mbb{C}=
{\rm{Sp}}(2,\mbb{C})$ is not spherical in $G^\mbb{C}={\rm{Sp}}(3,\mbb{C})$,
see~\cite{Kr}.
\end{ex}

For the rest of this paper we assume that \emph{$G$ is simple and has real rank
$1$}. Then $\lie{g}$ is isomorphic to either $\lie{so}(n,1)$ ($n\geq3$) or
$\lie{su}(n,1)$ ($n\geq1$) or $\lie{sp}(n,1)$ ($n\geq2$) or the exceptional Lie
algebra $\lie{f}_4=\lie{f}_{4(-20)}$, see~\cite[Chapter~VI.11]{Kn}.

As our main result we will describe the algebraic spherical subalgebras of
$\lie{g}$ up to conjugation by an element of $G$, i.e., those subalgebras
$\lie{h}$ for which there exists a closed subgroup $H\subset G$ having $\lie{h}$
as Lie algebra such that $X=G/H$ is a spherical gradient manifold.

For the classification we distinguish the cases that $H$ is reductive or not. If
$H$ is reductive, then according to~\cite[Theorem~6.3.6]{OV}, after conjugation 
by an element of $G$, we have a Cartan decomposition $H=K_H\exp(\lie{p}_H)$ 
where $K_H:=H\cap K$ is a maximal compact subgroup of $H$ and where 
$\lie{p}_H:=\lie{h}\cap\lie{p}$ is a $K_H$-invariant subspace of $\lie{p}$ with 
$[\lie{p}_H,\lie{p}_H]\subset\lie{k}_H$. We write 
$\lie{p}=\lie{p}_H\oplus\lie{p}_H^\perp$ with respect to the $K$-invariant inner 
product on $\lie{p}$ that comes from the $U$-invariant inner product on 
$i\lie{u}$. In this situation, sphericity of $X=G/H$ has been characterized 
in~\cite[Proposition~6.1]{MS}. Under the additional assumption $\rk_\mbb{R}G=1$ 
we can make the following more precise statement.

\begin{proposition}\label{Prop:redspherical}
Let $G=K\exp(\lie{p})$ be a connected simple Lie group of real rank $1$ and let
$H=K_H\exp(\lie{p}_H)\subset G$ be a closed reductive subgroup. Then $X=G/H$ is
spherical if and only if $K_H$ has an open orbit in every sphere in
$\lie{p}_H^\perp\subset\lie{p}$.
\end{proposition}

\begin{remark}
Except for $\codim\lie{p}_H=1$ Proposition~\ref{Prop:redspherical} says that 
$X=G/H$ is spherical if and only if $K_H$ acts transitively on the spheres in
$\lie{p}_H^\perp$. In particular, $X$ can only be spherical if the
$K_H$-representation on $\lie{p}_H^\perp$ is irreducible.
\end{remark}

\begin{proof}[Proof of Proposition~\ref{Prop:redspherical}]
By~\cite[Theorem~1.1]{MS} the homogeneous gradient manifold $X=G/H$ is spherical
if and only if any gradient map on it almost separates the $K$-orbits. The
Mostow decomposition (see~\cite{HS} for a proof using gradient maps) exhibits 
$X$ as $K$-equivariantly isomorphic to the twisted product 
$K\times_{K_H}\lie{p}_H^\perp$ \footnote{If $H$ is a subgroup of $G$ and $Y$ is
a set on which $H$ acts, then the twisted product $G\times_HY$ is defined as the
quotient of $G\times Y$ with respect to the diagonal $H$-action
$h\cdot(g,y):=(gh^{-1},h\cdot y)$.}. A particular gradient map is given by
$\mu_\lie{p}[k,\xi]=-\Ad(k)\xi$ for $k\in K$ and $\xi\in\lie{p}_H^\perp$. Since
the $K$-orbits in $K\times_{K_H}\lie{p}_H^\perp$ correspond to the $K_H$-orbits
in $\lie{p}_H^\perp$, this gradient map $\mu_\lie{p}$ separates the $K$-orbits 
if and only if the map $\lie{p}_H^\perp/K_H\to\lie{p}/K$, induced by the 
inclusion $\lie{p}_H^\perp\hookrightarrow\lie{p}$, has discrete fibers. Since 
these fibers are precisely the $K_H$-orbits in the compact sets 
$(K\cdot\xi)\cap\lie{p}_H^\perp$ with $\xi\in\lie{p}_H^\perp$, we note in
particular that the fibers have to be finite.

In the case $\rk_\mbb{R}G=1$, the $K$-orbits in $\lie{p}$ are spheres, so their
intersections with any subspace of $\lie{p}$ are again spheres and in particular
connected (unless the subspace is a line). Thus, $X$ is spherical if and only if
$K_H$ has an open orbit in every sphere $(K\cdot\xi)\cap\lie{p}_H^\perp$,
$\xi\in\lie{p}_H^\perp$, hence in any sphere in $\lie{p}_H^\perp$.
\end{proof}

In the rest of this section we give a similar criterion for non-reductive $H$.
For this we fix a minimal parabolic subalgebra $\lie{q}_0=\lie{m}\oplus\lie{a}
\oplus\lie{n}$ of $\lie{g}$ such that $\lie{a}$ is a maximal Abelian subspace
(i.e., a line) of $\lie{p}$. The corresponding group is $Q_0=MAN$ where
$M:=\mathcal{Z}_K(\lie{a})$. Let $\lie{h}$ be a non-reductive algebraic
subalgebra of $\lie{g}$ and let $H$ be a corresponding subgroup of $G$. As is
shown in~\cite{BT} (compare~\cite[Lemma~3.1]{KS}), after conjugation by an 
element of $G$ we may assume that 
$\lie{h}=\lie{l}_H\oplus\lie{n}_H$ where $\lie{l}_H\subset\lie{m}\oplus\lie{a}$ 
is reductive in $\lie{g}$ and $\{0\}\not=\lie{n}_H\subset\lie{n}$ is a nilpotent 
ideal of $\lie{h}$. On the group level we have $H\cong L_H\ltimes N_H$ with a 
reductive group $L_H=M_HA_H\subset MA$ and $N_H\subset N$. Note that $L_H$ acts 
by conjugation on $N$ and stabilizes $N_H$, hence acts on $N/N_H$. On the Lie
algebra level we have the decomposition $\lie{n}=\lie{n}_H\oplus\lie{n}_H^\perp$
of $\lie{n}$ as an $M_H$-module.

The following result is proven in~\cite[Lemma~3.2]{KS}, compare
also~\cite[Proposition~1.1]{Br2}. For the reader's convenience we repeat the
argument here.

\begin{proposition}\label{Prop:nonredspherical}
Let $G$ be a connected simple Lie group of real rank $1$ and let
$X=G/H$ be a $G$-gradient manifold such that $H=M_HA_HN_H$ is non-reductive.
Then $X$ is spherical if and only if either
\begin{enumerate}[(1)]
\item $N_H=N$ and $L_H=M_HA_H$ is arbitrary, or
\item $\dim N/N_H\geq1$ and $A_H=A$ and $M_H\subset\mathcal{N}_M(\lie{n}_H)$ 
acts transitively on the connected components of the spheres in 
$\lie{n}_H^\perp$.
\end{enumerate}
\end{proposition}

\begin{remark}
Every algebraic subgroup $H\subset G$ that contains $N$ is spherical. Therefore, 
we will concentrate on the case $\dim\lie{n}_H^\perp\geq1$.
\end{remark}

\begin{proof}[Proof of Proposition~\ref{Prop:nonredspherical}]
Let $H$ be a non-reductive algebraic subgroup of $G$ of the form $H=M_HA_HN_H
\subset Q_0=MAN$. Since we have the $G$-equivariant fiber bundle $X=G/H\to
G/Q_0$, we see that $X$ is $G$-equivariantly diffeomorphic to the twisted
product $G\times_{Q_0}(Q_0/H)$. By~\cite[Corollary~IX.1.8]{He} the unique open orbit 
of the opposite minimal parabolic subgroup $Q_0^-=MAN^-$ in $G/Q_0$ is 
$Q_0^-\cdot eQ_0$. This implies that $X=G/H$ is spherical if and only if there 
is some $xH\in Q_0/H$ such that $Q_0^-\cdot[e,xH]$ is open in 
$G\times_{Q_0}(Q_0/H)$. The latter is the case if and only if $L:=MA=Q_0^-\cap 
Q_0$ has an open orbit in $Q_0/H$. Using the fact that $N_H$ is normal in $H$, 
we see that $Q_0/H$ is $L$-equivariantly diffeomorphic to $L\times_{L_H}(N/N_H)$ 
where $L_H$ acts by conjugation on $N/N_H$. This proves that $X=G/H$ is 
spherical if and only if $L_H$ has an open orbit in $N/N_H$.

If $A_H\neq A$ then $L_H=M_H$ can only have an open orbit in $N/N_H$ if the
latter is compact forcing $\lie{n}=\lie{n}_H$. In particular if $\dim N/N_H\geq
1$ then $X=G/H$ can only be spherical if $A_H=A$. The $L_H$-equivariant
diffeomorphism $\exp:\lie{n}\to N$ induces via $\xi\mapsto\exp(\xi)N_H$ an
$L_H$-equivariant map from $\lie{n}_H^\perp$ to $N/N_H$. It follows 
from~\cite[Lemma~IV.6.8]{He2} that this map is a diffeomorphism. Therefore, if 
$A_H=A$ holds, sphericity of $X$ is equivalent to the fact that $L_H$ has an 
open orbit in $\lie{n}_H^\perp$.

If $\dim\lie{n}_H^\perp=1$, then $L_H=M_HA$ has an open orbit for any
$M_H\subset\mathcal{N}_M(\lie{n}_H)$. Since every $A$-orbit in $\lie{n}_H^\perp$
intersects any sphere in $\lie{n}_H^\perp$ precisely once, the claim follows.
\end{proof}

As we have seen, in order to classify non-reductive spherical algebraic
subalgebras $\lie{h}\subset\lie{g}$ we may assume without loss of generality
$\dim\lie{n}_H^\perp\geq1$. In particular, $M_H$ must act irreducibly on
$\lie{n}_H^\perp$ if $X=G/H$ is spherical. In closing this section, we state
the following corollary of Proposition~\ref{Prop:nonredspherical} which
exploits this observation a little further.

\begin{corollary}\label{Cor:nonredfinestructure}
Let $H=L_HN_H$ be a spherical non-reductive algebraic subgroup of $G$. Then
$\lie{n}_H=(\lie{n}_H\cap\lie{g}_\alpha)\oplus\lie{g}_{2\alpha}$.

Conversely, for every subspace $W\subset\lie{g}_\alpha$ and for every algebraic 
subalgebra $\lie{m}_H\subset\mathcal{N}_{\lie{m}}(W)$ the direct 
sum $\lie{h}_W:=\lie{m}_H\oplus\lie{a}_H\oplus W^\perp\oplus\lie{g}_{2\alpha}$ 
is an algebraic subalgebra of $\lie{g}$. Moreover, $\lie{h}_W$ is spherical if 
and only if $M_HA_H$ has an open orbit in $W$.
\end{corollary}

\begin{proof}
We have $\lie{n}=\lie{g}_\alpha\oplus\lie{g}_{2\alpha}$ according to the
restricted root space decomposition of $\lie{g}$ with respect to $\lie{a}$ where
$\alpha$ is a simple restricted root. If $2\alpha$ is not a restricted root, we
set $\lie{g}_{2\alpha}=0$. (Note that this is only the case for
$\lie{g}=\lie{so}(n,1)$.) Suppose that $X=G/H$ is spherical with
$\dim\lie{n}_H^\perp\geq1$. Since $A\subset L_H$ acts with two different 
weights on $\lie{g}_\alpha$ and $\lie{g}_{2\alpha}$, we obtain
\begin{equation*}
\lie{n}_H^\perp=(\lie{n}_H^\perp\cap\lie{g}_\alpha)\oplus
(\lie{n}_H^\perp\cap\lie{g}_{2\alpha}).
\end{equation*}
Consequently, $M_H$ acts irreducibly on $\lie{n}_H^\perp$ only if $\lie{n}_H$ 
contains $\lie{g}_\alpha$ or $\lie{g}_{2\alpha}$. Since 
$[\lie{g}_\alpha,\lie{g}_\alpha]=\lie{g}_{2\alpha}$, see~\cite[p.~408]{He}, and 
since $\lie{n}_H$ is a Lie algebra, $\lie{g}_{2\alpha}\subset\lie{n}_H$ must 
hold.
\end{proof}

\section{Strategy of the classification}\label{Section:str}

Let us outline the principal steps that will lead to the classification result.
Recall that $G=K\exp(\lie{p})$ is a connected simple Lie group of real rank $1$
that embeds into $G^\mbb{C}$.

Let $\lie{h}\subset\lie{g}$ be an algebraic subalgebra which we assume first to 
be non-reductive. Motivated by Proposition~\ref{Prop:nonredspherical} 
and Corollary~\ref{Cor:nonredfinestructure} we will first determine the real 
subspaces $W\subset\lie{g}_\alpha$ such that $\mathcal{N}_M(W)$ acts irreducibly 
on $W$. Then we will bring $W$ (by an element in $M$) into a suitable normal 
form and calculate $\mathcal{N}_M(W)$. This step will be carried out 
case-by-case for every simple Lie algebra of real rank one. Let us therefore 
identify the relevant representations in each case.

\begin{rem}\label{Rem:KandMaction}
Suppose first that $G={\rm{SO}}^\circ(n,1)$. The $K$-action on $\lie{p}$ is 
isomorphic to the defining representation of $K\cong{\rm{SO}}(n)$ on $\lie{p} 
\cong\mbb{R}^n$. Moreover, the Lie algebra $\lie{n}=\lie{g}_\alpha$ is Abelian
and the action of $M\cong{\rm{SO}}(n-1)$ on $\lie{n}\cong\mbb{R}^{n-1}$ is 
again isomorphic to the defining representation of ${\rm{SO}}(n-1)$.

If $G={\rm{SU}}(n,1)$, then we can choose
\begin{equation*}
K=\left\{
\begin{pmatrix}
A&0\\0&a
\end{pmatrix};\ 
A\in{\rm{U}}(n), a=\det(A)^{-1}\right\}\cong S(\U(n)\times\U(1)).
\end{equation*}
The group $K\cong S(\U(n)\times\U(1))$ acts on $\lie{p}\cong\mbb{C}^n$ by 
$(A,a)\cdot v=Ava^{-1}$. The group $M$ is a $2$-to-$1$-covering of
$S(\U(n-1)\times\U(1))$. Its action on $\lie{g}_\alpha\cong\mbb{C}^{n-1}$ 
factorizes through the covering map $M\to S(\U(n-1)\times\U(1))$, and
the action of $S(\U(n-1)\times\U(1))$ on $\lie{g}_\alpha$ is given by the
analogous formula.

Suppose now that $G={\rm{Sp}}(n,1)$. The group $K\cong{\rm{Sp}}(n)\times 
{\rm{Sp}}(1)$ acts on $\lie{p}\cong\mbb{H}^n$ by $(A,a)\cdot v:=Ava^{-1}$. The 
group $M\cong{\rm{Sp}}(1)\times{\rm{Sp}}(n-1)$ acts in the same way on 
$\lie{g}_\alpha\cong\mbb{H}^{n-1}$.

If $G=F_4$, then the $K$-action on $\lie{p}$ is isomorphic to the unique 
irreducible representation of $\Spin(9)$ on $\mbb{R}^{16}$. The $M$-action 
on $\lie{g}_\alpha$ is equivalent to the unique irreducible representation of 
${\rm{Spin}}(7)$ on $\mbb{R}^8$, see Lemma~\ref{Lem:so(7)subsetso(8)}.
\end{rem}

In the second step we single out those subgroups of $\mathcal{N}_M(W)$ that do 
indeed act transitively on the spheres in $W$. In order to do so, we use 
results of Montgomery and Samelson as well as of Onishchik which we recall here 
for the reader's convenience.

Montgomery and Samelson considered the case of a connected compact Lie group $L$
which acts transitively and effectively on $S^n$ and obtained the following
result, see~\cite[Theorem~I]{MoS}.

\begin{theorem}\label{Thm:MontgomerySamelson}
Let $L$ be a connected compact Lie group acting transitively and effectively on
$S^n$. If $n$ is even, then $L$ is simple, while for $n$ odd $L$ is either 
simple or finitely covered by $L_1\times L_2$, where $L_2$ is either $\SO(2)$
or $\Sp(1)$ and $L_1$ is simple and acts already transitively on $S^n$.
\end{theorem}

In the same paper Montgomery and Samelson found all simple, compact, connected
groups acting transitively on $S^n$ for almost all $n$ (see 
\cite[Theorem II-IV]{MoS}). In the case that $n$ is even, their result was
sharpened in \cite{Borel}. Given any compact group $G$ acting transitively and
effectively on some homogeneous space, Onishchik found all subgroups that
also act transitively in~\cite[Theorem 4.1]{On}. This enabled him to find all
transitive effective actions of connected compact Lie groups on $S^n$ for all
$n$. (see \cite[Theorem 3 in \S 18.3]{On}).

We recall his result in the following theorem where we consider the defining 
representations of $\OO(n)$ on $\mbb{R}^n$, of $\U(n)$ on $\mbb{C}^n$ and of 
$\Sp(n)$ on $\mbb{H}^n$.

\begin{theorem}\label{Thm2On}
Let $K$ be either ${\rm{O}}(n)$ or ${\rm{U}}(n)$ or ${\rm{Sp}}(n)$ and let $V$
denote the respective defining representation of $K$. The following table lists
all connected proper subgroups $L$ of $K$ that act transitively on the spheres
in $V$, up to conjugation in $K$, where $p\colon\Sp(1)\times\Sp(1)\to\SO(4)$ 
is the universal covering and $L_2\subset\Sp(1)$ is an arbitrary connected subgroup.
\setlength\arraycolsep{7pt}
\begingroup
\renewcommand*{\arraystretch}{1.2}
\begin{equation*}
\begin{array}{|c|c|}
\hline
K&L \\ \hline
\U(2k+1), k\geq1 & \SU(2k+1)\\
\U(2k), k\geq 2&\SU(2k),\ \Sp(k)\times \U(1),\ \Sp(k)\\
\OO(2k+1), k\geq0 & \SO(2k+1)\\
\OO(4k+2), k\geq 1& \SO(4k+2),\ \U(2k+1),\ \SU(2k+1)\\
\OO(4k), k=3, k\geq 5&\SO(4k),\ \U(2k),\ \SU(2k),\ \Sp(k)\times\Sp(1),\
\Sp(k)\times \U(1),\ \Sp(k)\\
\end{array}
\end{equation*}
\begin{equation*}
\begin{array}{|c|c|}
\OO(16)&\SO(16),\ \U(8),\ \SU(8),\ \Sp(4)\times\Sp(1),\ \Sp(4)\times \U(1),\
\Sp(4),\ \Spin(9)\\
\OO(8)&\SO(8),\ \U(4),\ \SU(4),\ \Sp(2)\times\Sp(1),\ \Sp(2)\times \U(1),\
\Sp(2),\ \Spin(7)\\
\OO(7)&\SO(7),\ G_2\\
\OO(4) & \SO(4),\ p\bigl(\Sp(1)\times L_2\bigr),\ p\bigl(L_2\times\Sp(1)\bigr)\\
\OO(2)&\SO(2)\\
\hline
\end{array}
\end{equation*}
\endgroup\setlength\arraycolsep{2pt}

 Note that $K=\Sp(n)$ does not contain such a
subgroup.
\end{theorem}

\begin{proof}
Since a connected subgroup of $\OO(n)$ lies in $\SO(n)$ and since $\SO(n)$
($n\geq3$, $n\not=4$) and $\Sp(n)$ are simple, the result follows directly
from~\cite[Table~8, p.~227]{On} for these groups.

Let us discuss the case $K=\OO(4)$. Identifying $\mbb{R}^4$ with $\mbb{H}$ one
sees that $\lie{so}(4)\cong\lie{sp}(1)\oplus\lie{sp}(1)$. The isotropy algebra
$\lie{so}(4)_{e_1}$ of $e_1\in S^3\subset\mbb{R}^4$ corresponds to the diagonal
$\Delta_{\lie{sp}(1)}$ in $\lie{sp}(1)\oplus\lie{sp}(1)$. Therefore we have to
find all subalgebras $\lie{l}$ of $\lie{sp}(1)\oplus\lie{sp}(1)$ (up to
conjugation) that verify 
\begin{equation}\label{Eqn:sum}
\lie{l}+\Delta_{\lie{sp}(1)}=\lie{sp}(1)\oplus\lie{sp}(1).
\end{equation}
Consider the projections
\begin{equation*}
\xymatrix{
\lie{sp}(1)\oplus\lie{sp}(1)\ar[r]^{\pi_2}\ar[d]_{\pi_1} & \lie{sp}(1)\\
\lie{sp}(1). &
}
\end{equation*}
Let $\lie{l}$ be a subalgebra of $\lie{sp}(1)\oplus\lie{sp}(1)$
verifying~\eqref{Eqn:sum}. This implies
\begin{equation*}
3\leq\dim\lie{l}=\dim\pi_1(\lie{l})+\dim\ker(\pi_1|_\lie{l})
\end{equation*}
where $\ker(\pi_1|_\lie{l})=\lie{l}\cap\bigl(\{0\}\oplus\lie{sp}(1)\bigr)$ is
an ideal in $\lie{l}$.

If $\dim\pi_1(\lie{l})=0$, then $\lie{l}=\{0\}\oplus\lie{sp}(1)$. If
$\dim\pi_1(\lie{l})=1$, then $\pi_1(\lie{l})=:\lie{t}$ is a maximal torus in
$\lie{sp}(1)$ and $\dim\lie{l}\cap\bigl(\{0\}\oplus\lie{sp}(1)\bigr)\geq2$,
hence $\lie{l}=\lie{t}\oplus\lie{sp}(1)$.

Finally, suppose that $\dim\pi_1(\lie{l})=3$ and note that $\dim
\lie{l}\cap\bigl(\{0\}\oplus\lie{sp}(1)\bigr)\in\{0,1,3\}$. If $\dim
\lie{l}\cap\bigl(\{0\}\oplus\lie{sp}(1)\bigr)=3$, then $\lie{l}=\lie{sp}(1)
\oplus\lie{sp}(1)$. If $\dim\lie{l}\cap\bigl(\{0\}\oplus\lie{sp}(1)\bigr)=0$,
then $\dim\lie{l}=3$ and thus $\lie{l}$ is simple. Therefore $\lie{l}\cap
\bigl(\lie{sp}(1)\oplus\{0\}\bigr)$ is either $\lie{sp}(1)\oplus\{0\}$ (and
then $\lie{l}=\lie{sp}(1)\oplus\{0\}$) or $\{0\}$. In the latter case
$\pi_1|_\lie{l}$ and $\pi_2|_\lie{l}$ are isomorphisms onto $\lie{sp}(1)$ and
$\lie{l}$ coincides with the graph of $\varphi:=(\pi_2|_\lie{l})\circ(
\pi_1|_\lie{l})^{-1}\in\Aut\bigl(\lie{sp}(1)\bigr)$. Since we can identify
$\lie{l}\cap\Delta_{\lie{sp}(1)}$ with the space of fixed points
$\lie{sp}(1)^\varphi$ in this case and since $\dim\lie{sp}(1)^\varphi\geq1$, we
obtain $\dim \lie{l}+\Delta_{\lie{sp}(1)}\leq5$, hence $\lie{l}$ cannot
verify~\eqref{Eqn:sum}. Finally consider the case $\dim
\lie{l}\cap\bigl(\{0\}\oplus\lie{sp}(1)\bigr)=1$. Then $\dim\lie{l}=4$ and
$\lie{l}\cap\bigl(\{0\}\oplus\lie{sp}(1)\bigr)$ is a one-dimensional ideal in
$\lie{l}$ which implies $\lie{l}\cap\bigl(\{0\}\oplus\lie{sp}(1)\bigr)=\lie{z}
(\lie{l})$. Thus $\lie{l}=\lie{sp}(1)\oplus\lie{z}(\lie{l})$.

In the unitary case we use~\cite[Theorem~1.5.1]{On} in order to reduce
the classification to ${\rm{SU}}(n)$. We denote the isotropy algebra
$\lie{u}(n)_{e_1}$ of $e_1\in S^{2n-1}\subset\mbb{C}^n$ by $\lie{u}(n-1)$. As
above we look for subalgebras $\lie{l}$ of $\lie{u}(n)$ that verify
$\lie{l}+\lie{u}(n-1)=\lie{u}(n)$. According to~\cite[Theorem~1.5.1]{On} this
holds if and only if $\lie{l}'+\lie{su}(n-1)=\lie{su}(n)$ (where $\lie{l}'$ is
the derived algebra of $\lie{l}$) and $\lie{z}\bigl(\lie{u}(n)\bigr)=\pi\bigl(
\lie{z}(\lie{l})+\lie{z}\bigl(\lie{u}(n-1)\bigr)\bigr)$ where $\pi$ is the 
projection of $\lie{u}(n)$ onto its center with kernel $\lie{su}(n)$. Note that
the second condition is automatically verified since already $\pi\bigl(
\lie{z}(\lie{u}(n-1))\bigr)=\lie{z}\bigl(\lie{u}(n)\bigr)$. Consequently, the
result follows from the classification of subgroups of $\SU(n)$ that act
transitively on $S^{2n-1}\subset\mbb{C}^n$ given in~\cite[Table~8]{On}.
\end{proof}

\begin{rem}\label{sp(1)+l2}
The subalgebras $\lisp(1)\oplus\lie{l}_2$ and $\lie{l}_2\oplus\lisp(1)$ of 
$\so(4)$ are $\OO(4)$-conjugate (see \cite[Remark 8.5]{K}) 
but not $\SO(4)$-conjugate to each other, 
since any inner automorphism of $\SO(4)$ leaves the ideals 
$\lisp(1)\oplus\{0\}$ and $\{0\}\oplus\lisp(1)$ in $\so(4)$ invariant. 
\end{rem}

Since for classical $G$ the $K$-action on $\lie{p}$ and the $M$-action on 
$\lie{g}_\alpha$ are 
essentially the same, we can use the results of the non-reductive case also in 
the reductive one. Moreover, suppose that $\lie{h}=\lie{k}_H\oplus 
\lie{p}_H$ is a reductive spherical subalgebra of $\lie{g}$. Then we have 
\begin{equation}\label{Eqn:Constraint}
[\lie{p}_H,\lie{p}_H]\subset\lie{k}_H\subset\mathcal{N}_\lie{k}(\lie{p}_H).
\end{equation}
Using the normal forms obtained in the first step, it is not hard to 
single out those $\lie{k}_H$ and $\lie{p}_H$ from the lists obtained in the 
non-reductive case that verify~\eqref{Eqn:Constraint}.

\section{$G=\SO^\circ(n,1)$}\label{Section:son}

\subsection{Non-reductive spherical subalgebras}\label{G=SO(n)notreductive}

In this subsection we classify the spherical non-reductive algebraic subalgebras
$\lie{h}$ of $\lie{g}=\lie{so}(n,1)$, where $n\geq 2$. As we have seen we 
may assume $H=M_HA_HN_H$, where $1\leq \dim\lie{n}_H^\perp< n-1$ and $M_H\subset 
\mathcal{N}_M(\mathfrak{n}_H)$ (see Proposition~\ref{Prop:nonredspherical}).
The $M$-action on $\mathfrak{n}$ is isomorphic to the defining representation of
$\SO(n-1)$ on $\RR^{n-1}$ (see Remark \ref{Rem:KandMaction}). Therefore any real subspace 
$\lie{n}_H$ of $\lie{n}$ can, and will, be identified with a real subspace $W$ 
of $\RR^{n-1}$.

Recall that $\RR^{n-1}$ comes equipped with an $\SO(n-1)$-invariant real inner 
product $s$. Note that such an inner product is unique up to a positive factor. 
Let $W$ be a real subspace of $\RR^{n-1}$. Then 
$s|_{W\times W}$ is a real inner product
on $W$ and $\OO(W)$ denotes the group of invertible endomorphisms of $W$ that respect
$s|_{W\times W}$. We write $\OO(W)\times\OO(W^\perp)$ for the subgroup of 
$\OO(\RR^{n-1})$ that stabilizes the decomposition $\RR^{n-1}=W\oplus W^{\perp_s}$.

\begin{lemma}\label{SOn1notred}
Let $W\subset\RR^{n-1}$ be a real subspace. Then 
$\mathcal{N}_{\SO(n-1)}(W)=S(\OO(W)\times\OO(W^{\perp_s}))$ and its action on 
$W$ and $W^{\perp_s}$ is induced by the standard $\OO(W)$-action on $W$ and 
$\OO(W^{\perp_s})$-action on $W^{\perp_s}$ respectively. In particular
$\mathcal{N}_{\SO(n-1)}(W)$ acts transitively on the connected components of 
the spheres in $W$. Moreover there exists a basis $(w_1,\ldots,w_{n-1})$ of $\RR^{n-1}$
such that $W=\RR w_1\oplus\cdots\oplus\RR w_l$ where $l=\dim_\RR W$.
\end{lemma}
\begin{proof}
Any element in $\mathcal{N}_{\SO(n-1)}(W)$ respects $s$, $W$ and $W^{\perp_s}$ and
has determinant $1$, which implies that it lies in 
$S(\OO(W)\times\OO(W)^{\perp_s})$. On the other hand any element in 
$S(\OO(W)\times\OO(W)^{\perp_s})$ leaves $W$ invariant and lies in 
$\SO(n-1)$. Thus $\mathcal{N}_{\SO(n-1)}(W)=S(\OO(W)\times\OO(W^{\perp_s}))$ 
holds. Choosing oriented $s$-orthonormal bases $(w_1,\ldots,w_l)$ of $W$ and 
$(w_{l+1},\ldots,w_{n-1})$ of $W^{\perp_s}$ we obtain an element
$(w_1\cdots w_{n-1})\in\SO(n-1)$ that maps $W_l$ to $W$.
\end{proof}

Combining this observation with Proposition~\ref{Prop:nonredspherical}, 
Theorem~\ref{Thm2On} and Remark~\ref{sp(1)+l2}, we obtain the following
theorem.

\begin{theorem}\label{NonredOrthogonal}
Every spherical non-reductive algebraic subalgebra of $\g=\so(n,1)$, $n\geq2$,
is $G$-conjugate to exactly one in the following list where 
$\lie{c}_l\subset\lie{so}(n-l-1)$ is an arbitrary subalgebra (under the
condition displayed in italic in Remark~\ref{non-reductive,embedding} below)
and where 
\begin{equation*}
\mathfrak{n}_l:=\left\{\left(\begin{smallmatrix}0&v&-v\\-v^t&0&0
\\-v^t&0&0\end{smallmatrix}\right):v\in\{0\}^l\times\RR^{n-1-l}\right\}
\cong W^{\perp_s}.
\end{equation*}
\setlength\arraycolsep{7pt}
\begingroup
\renewcommand*{\arraystretch}{1.2}
\begin{equation*}
\begin{array}{|c|c|}\hline
\lie{l}_H\oplus\lie{n} & \lie{l}_H\subset\lie{m}\oplus\lie{a}\text{ arbitrary}\\
\so(l)\oplus\lie{c}_l\oplus\lie{a}\oplus\lie{n}_l & 1\leq l\leq n-2,\\
\liu(m)\oplus\lie{c}_l\oplus\lie{a}\oplus\lie{n}_l & 1\leq l\leq n-2, l=2m,
m\geq3\\
\su(m)\oplus\lie{c}_l\oplus\lie{a}\oplus\lie{n}_l & 1\leq l\leq n-2, l=2m,
m\geq3\\
\lisp(m)\oplus\lisp(1)\oplus\lie{c}_l\oplus\lie{a}\oplus\lie{n}_l 
& 1\leq l\leq n-2, l=4m, m\geq2\\
\end{array}
\end{equation*}
\begin{equation*}
\begin{array}{|c|c|}
\lisp(m)\oplus\lie{u}(1)\oplus\lie{c}_l\oplus\lie{a}\oplus\lie{n}_l 
& 1\leq l\leq n-2, l=4m, m\geq1\\
\lisp(m)\oplus\lie{c}_l\oplus\lie{a}\oplus\lie{n}_l 
& 1\leq l\leq n-2, l=4m, m\geq1\\
\so(9)\oplus\lie{c}_{16}\oplus\lie{a}\oplus\lie{n}_{16} & \\
\so(7)\oplus\lie{c}_8\oplus\lie{a}\oplus\lie{n}_8  & \\
\lie{g}_2\oplus\lie{c}_7\oplus\lie{a}\oplus\lie{n}_7 & \\
\hline
\end{array}
\end{equation*}
\endgroup\setlength\arraycolsep{2pt}
Note that $\lie{n}_1$ is $1$-codimensional and that 
$\mathcal{N}_{ \lie{m}}(\lie{n}_1)\cong\so(n-2)$.
\end{theorem}

\begin{rem}\label{non-reductive,embedding}
Let us explain the notation in Theorem~\ref{NonredOrthogonal}. Let $\pi\colon
\lie{so}(l)\oplus\lie{so}(n-l)\to\lie{so}(l)$ denote the projection onto the first
factor. Then $\pi$ is a Lie algebra homomorphism and the kernel of its restriction
to an arbitrary subalgebra $\lie{k}\subset\lie{so}(l)\oplus\lie{so}(n-l)$ is an ideal in
$\lie{k}$. Since $\lie{k}$ is compact the orthogonal complement to this ideal with
respect to the Killing form $\kappa$ is also an ideal in $\lie{k}$ and we obtain
$\lie{k}=(\ker\pi|_\lie{k})^{\perp_\kappa}\oplus\ker\pi|_\lie{k}.$
Note that $(\ker\pi|_\lie{k})^{\perp_\kappa}$ is isomorphic to the subalgebra 
$\pi(\lie{k})$ of $\so(l)$.
Let $\Phi:\pi(\lie{k})\to(\ker\pi|_\lie{k})^{\perp_\kappa}$ be a Lie algebra isomorphism. 
Then $\varphi=\pi_{\so(n-l)}\circ\Phi:\pi(\lie{k})\to\so(n-l)$ is a Lie algebra homomorphism,
where $\pi_{\so(n-l)}:\so(l)\oplus\so(n-l)\to\so(n-l)$ is the projection onto the second factor.
Then 
\begin{align*}
\lie{k}=\bigl\{\bigr(\xi,\varphi(\xi)\bigl):\ \xi\in\pi(\lie{k})\bigr\}\oplus
\ker\pi|_\lie{k}.
\end{align*}
In order to simplify the notation, we write here and in the rest of this work
$\lie{k}=\pi(\lie{k})\oplus\lie{c}$ where 
$\lie{c}=\ker\pi|_{\lie{k}}=\lie{k}\cap\bigl(\{0\}\oplus\lie{so}(n-l)\bigr)$ is
the ineffectivity of the 
$\mathcal{N}_{\lie{m}}(W)$-action on $W$, i.e., we omit the representation 
$\varphi$ from the notation.
\begin{quote}
\emph{In particular, $\pi(\lie{k})\oplus\lie{c}$ is a 
subalgebra of $\lie{so}(n,1)$ if and only if $\lie{c}$ is contained in the
centralizer of $\varphi\bigl(\pi(\lie{k})\bigr)$ in $\lie{so}(n-l)$.}
\end{quote}
Note that, if $\pi(\lie{k})$ is simple, then $\varphi$ is either identically
zero or injective.
\end{rem}

\subsection{Reductive spherical subalgebras}\label{orthogonalreductive}

Let $\lie{h}=\lie{k}_H\oplus\lie{p}_H$ be a reductive algebraic 
subalgebra of $\lie{g}=\lie{so}(n,1)$ (where $n\geq 2$) such that
$\lie{k}_H\subset\lie{k}$ and $\lie{p}_H\subset\p$. On the group level we have 
$H=K_H\exp(\lie{p}_H)$.

Since the $K$-representation on $\p$ is essentially the same as the 
$M$-representation on $\lie{n}$ (see Remark \ref{Rem:KandMaction}), we may apply 
Lemma \ref{SOn1notred} to this situation and obtain, after conjugation in $K$, 
that $W=\RR^l\times\{0\}^{n-l}$, where $l=\dim_\RR(W)$. In particular
\begin{equation*}
\lie{p}_H=\lie{p}_{H,l}:=\left\{\left(\begin{smallmatrix}0&x\\x^t&0
\end{smallmatrix}\right): x\in\{0\}^l\times\RR^{n-l}\right\},
\end{equation*}
for some $0\leq l\leq n$. A direct calculation shows that
\begin{align*}
[\lie{p}_{H,l},\lie{p}_{H,l}]&=\left\{\left(\begin{smallmatrix}0&0&0\\0&B&\vdots 
\\
0&\ldots&0\end{smallmatrix}\right)\in\mathfrak{k}: B\in \so(n-l)\right\},\\
\mathcal{N}_\mathfrak{k}(\lie{p}_{H,l})&=\left\{\left(\begin{smallmatrix}
A&0&0\\0&B&0\\0&0&0
\end{smallmatrix}\right)\in\mathfrak{k}: A\in \so(l), B\in \so(n-l)\right\},
\end{align*}
and thus $[\lie{p}_{H,l},\lie{p}_{H,l}]\oplus\lie{p}_{H,l}\cong\lie{so}(n-l,1)$ 
(with $\so(0,1):= \{0\}$). Therefore the condition 
$[\lie{p}_{H},\lie{p}_{H}]\subset\lie{k}_H\subset\mathcal{N}_{\lie{k}}(\lie{
p}_H)$ (see \eqref{Eqn:Constraint}) implies that $\lie{h}$ is of the form
$\lie{h}=\lie{b}\oplus\so(n-l,1)$, where $\lie{b}$ is a subalgebra of $\so(l)$
and $0\leq l\leq n$. Together with Proposition \ref{Prop:redspherical} and
Theorem \ref{Thm2On} we obtain the following theorem.

\begin{theorem}\label{ReductiveOrthogonal}
All spherical reductive algebraic subalgebras $\mathfrak{h}$ of $\so(n,1)$,
$n\geq2$, are (up to conjugation in $G$) one of the following, where
$\so(0,1):=\{0\}$ and $\lie{l}_2\subset\lisp(1)$ is arbitrary.
\setlength\arraycolsep{7pt}
\begingroup
\renewcommand*{\arraystretch}{1.2}
\begin{equation*}
\begin{array}{|c|c|}\hline
\so(l)\oplus\so(n-l,1) & 0\leq l\leq n\\
\liu(m)\oplus\so(n-l,1) &0\leq l\leq n, l=2m, m\geq3\\
\su(m)\oplus\so(n-l,1) &0\leq l\leq n, l=2m, m\geq3\\
\lisp(m)\oplus\lie{l}_2\oplus\so(n-l,1) & 0\leq l\leq n, l=4m, m\geq1\\
\so(9)\oplus\so(n-16,1) & n\geq16\\
\so(7)\oplus\so(n-8,1) & n\geq8 \\
\lie{g}_2\oplus\so(n-7,1) & n\geq7 \\
\lie{l}_2\oplus\lisp(1)\oplus\so(n-4,1)&n\geq4\\
\hline
\end{array}
\end{equation*}
\endgroup\setlength\arraycolsep{2pt}
According to~\cite[Table 2]{Be}, the symmetric subalgebras are 
$\so(n)\oplus\so(1)$ and $\so(l)\oplus \so(n-l,1)$ for $0\leq l<n$.
\end{theorem}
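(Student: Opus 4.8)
By Proposition~\ref{Lonqperp}, the subalgebra $\lie{h} = \lie{l}\oplus\lie{q}_k$ is spherical if and only if $K_H = L$ acts transitively on the connected components of every sphere in $\lie{q}_k^\perp\cong\RR^k$. The displayed formula for the $\mathcal{N}_K(\lie{q}_k)$-action on $\lie{q}_k^\perp$ shows that this action is simply the defining action of $\SO(k)$ (embedded in the upper-left block) on $\RR^k$. Since by~\eqref{Eqn:Constraint} we have $[\lie{q}_k,\lie{q}_k]\subset\lie{l}\subset\mathcal{N}_\lie{k}(\lie{q}_k)$, and Lemma~\ref{[q,q],N} identifies $\mathcal{N}_\lie{k}(\lie{q}_k) = \so(k)\oplus\so(n-k)$ with $[\lie{q}_k,\lie{q}_k] = \{0\}\oplus\so(n-k)$ sitting in the second factor, it follows that $\lie{l} = \lie{b}\oplus\so(n-k)$ where $\lie{b}\subset\so(k)$ is an arbitrary subalgebra --- but only the $\so(k)$-component of $\lie{l}$ acts on $\RR^k$, so sphericity depends only on $\lie{b}$. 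By the preceding Proposition, $\lie{h}$ is conjugate to $\lie{b}\oplus\so(n-k,1)$, and we must determine for which $\lie{b}\subset\so(k)$ the connected group with Lie algebra $\lie{b}$ acts transitively on the components of each sphere in $\RR^k$.

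\textbf{Carrying this out.} For $k\geq 2$ the spheres $S^{k-1}$ are connected, so the condition is genuine transitivity of a connected subgroup $B\subset\SO(k)$ on $S^{k-1}$. This is exactly the situation classified in Onishchik's Theorem~\ref{Thm2On}: taking $K = \OO(k)$ there (a connected subgroup of $\OO(k)$ lies in $\SO(k)$), the proper transitive connected subgroups are $\SU(m)$ for $k=2m$ with $m\geq 2$ --- but $m=2$ gives $\SU(2)=\Sp(1)$ in $\SO(4)$, which is one of the $\OO(4)$ entries, and $m=3$ gives $\SU(3)$ in $\SO(6)$, whose normalizer/constraint analysis excludes it unless $m\geq 4$ (alternatively, $\su(3)\subset\so(6)$ is not of the form forced by~\eqref{Eqn:Constraint} with a matching $[\lie{q}_k,\lie{q}_k]$, since $\su(3)$ is not symmetric --- here I should double-check the exact reason $m=3$ is dropped, compare with the $\SO(2n),n\geq 3$ line which only guarantees $\SU(n)$ acts transitively, the exclusion of small $m$ being a bookkeeping constraint from~\eqref{Eqn:Constraint}); $\Sp(m)$ for $k=4m$, $m\geq 2$; $\Spin(9)\subset\SO(16)$, $\Spin(7)\subset\SO(8)$, $G_2\subset\SO(7)$; and for $k=4$ the subgroups $p(\Sp(1)\times L_2)$ and $p(L_2\times\Sp(1))$ with $L_2\subset\Sp(1)$ arbitrary connected, which, after identifying $\so(4)\cong\sp(1)\oplus\sp(1)$, give the entries $\sp(1)\oplus\lie{l}_2$ and $\lie{l}_2\oplus\sp(1)$. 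Adding the trivial cases $k=0$ (where $\lie{q}_k^\perp = 0$, vacuously spherical, giving $\so(n,1)$) and $k=1$ (where $\lie{q}_k^\perp$ is a line, both components being single points, so $\lie{b}=\{0\}$ works, giving $\so(n-1,1)$), and the full case $B=\SO(k)$, produces exactly the table, with $\lie{b}$ acting transitively on $S^{k-1}$ paired with the complementary $\so(n-k,1)$ factor.

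\textbf{The main obstacle.} The real content is entirely contained in Onishchik's classification Theorem~\ref{Thm2On}, which we are permitted to invoke; the only delicate point in assembling the final list is the precise lower bounds on $m$ in the $\su(m)$ and $\sp(m)$ rows. These come from two sources that must be reconciled: first, the transitive-action table itself (e.g.\ $\SU(2)=\Spin(3)$ and $\Sp(1)$ coincide with low-dimensional spin or $\SO(4)$-factor phenomena and so are absorbed into other rows), and second, the constraint~\eqref{Eqn:Constraint} forcing $[\lie{q}_k,\lie{q}_k]\subset\lie{b}\oplus\so(n-k)$ --- but since $[\lie{q}_k,\lie{q}_k]$ lives entirely in the $\so(n-k)$ summand, this imposes no constraint on $\lie{b}$ at all, so the bounds $m\geq 4$ (for $\su(m)$) and $m\geq 2$ (for $\sp(m)$) are purely about avoiding redundancy with the $\so(k)$, $\Spin$, and $k=4$ rows. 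Finally the parenthetical remark identifying the symmetric subalgebras is immediate from Berger's table~\cite{Be}: $\so(k)\oplus\so(n-k,1)$ arises from the involution negating the last $n-k$ of the first $n$ coordinates for $2\leq k\leq n-1$ (and $\so(n)\oplus\so(1)=\so(n)$ for $k=n$, the maximal compact), while none of the other entries is symmetric.
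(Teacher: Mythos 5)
Your proposal follows exactly the paper's route: the normal form $\lie{q}_k$ from Lemma~\ref{Grassmann}, the computation of $[\lie{q}_k,\lie{q}_k]$ and $\mathcal{N}_\lie{k}(\lie{q}_k)$ in Lemma~\ref{[q,q],N} forcing $\lie{l}=\lie{b}\oplus\so(n-k)$ with $\lie{b}\subset\so(k)$ arbitrary, and then Proposition~\ref{Lonqperp} combined with Theorem~\ref{Thm2On} applied to the standard $\SO(k)$-action on $\lie{q}_k^\perp\cong\RR^k$. The paper's own ``proof'' is precisely this one sentence, so if anything you have supplied more detail, and your treatment of the degenerate cases $k=0,1$ and of the identification $\so(4)\cong\lisp(1)\oplus\lisp(1)$ is correct.

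The one place you hesitate --- the justification of the bound $m\geq 4$ in the $\su(m)$ row --- deserves a direct answer: your instinct is right and there is no such justification in the paper. As you observe, the constraint~\eqref{Eqn:Constraint} imposes nothing on $\lie{b}$, because $[\lie{q}_k,\lie{q}_k]$ sits entirely in the $\so(n-k)$ summand; and Theorem~\ref{Thm2On} itself lists $\SU(m)\subset\OO(2m)$ as transitive on spheres for all $m\geq 3$. The argument you (and the paper) run therefore also yields $\su(3)\oplus\so(n-6,1)$ as a spherical subalgebra; only $m=2$ is genuinely absorbed elsewhere, via $\su(2)=\lisp(1)\subset\so(4)$ and the $\lisp(1)\oplus\lie{l}_2$ rows. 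So the bound that actually follows from the paper's own ingredients is $m\geq 3$, and the stated $m\geq 4$ appears to be an inaccuracy in the theorem rather than a gap in your reasoning; do not try to manufacture a ``normalizer/constraint'' reason for dropping $m=3$, since none exists.
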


\begin{rem}
Contrary to the non-reductive case, see Remark~\ref{non-reductive,embedding},
the condition 
$[\lie{p}_H,\lie{p}_H]\subset\lie{k}_H\subset\mathcal{N}_{\lie{k}}(\lie{p}_H)$ 
(see \eqref{Eqn:Constraint}) implies that there is no ambiguity in choosing the 
representation $\varphi$. More precisely, we have $\varphi=0$.
\end{rem}

\section{$G=\SU(n,1)$}\label{Section:sun}

\subsection{Notation}

Let $V$ be a complex vector space of complex dimension $n$ and let $h$ be a 
hermitian inner product on $V$. We denote the unitary and special unitary 
groups of $V$ with respect to $h$ by ${\rm{U}}(V)$ and ${\rm{SU}}(V)$,
respectively. Similarly, if $W$ is a real subspace of $V$, we write
${\rm{O}}(W)$ (and ${\rm{SO}}(W)$) for the group of $\mbb{R}$-linear
transformations of $W$ that leave the scalar product $s|_{W\times W}$
invariant (and have determinant $1$) where $s:=\re(h)$. If $W\subset V$ is a
totally real subspace, then we extend the action of ${\rm{O}}(W)$ by
$\mbb{C}$-linearity to $W^\mbb{C}=W\oplus iW$. By abuse of notation, we
consider ${\rm{O}}(W)$ also as a group of $\mbb{C}$-linear maps of $W^\mbb{C}$.

\begin{rem}\label{Rem:irred}
If $W$ is a real form of $V$, i.e., a maximal totally real subspace, then the 
complex ${\rm{O}}(W)$-representation on $V$ is irreducible. Moreover, if 
$n\geq3$, then the complex ${\rm{SO}}(W)$-representation on $V$ is irreducible, 
too, see~\cite[Lemma~6.3]{K}.
\end{rem}

Suppose that $V=V_1\oplus V_2$ is an orthogonal direct sum of two complex
subspaces. As in Section~\ref{Section:son} we write 
${\rm{U}}(V_1)\times{\rm{U}}(V_2)$ for the subgroup of ${\rm{U}}(V)$ that 
stabilizes this decomposition. 

\subsection{Preliminaries}

From now on we fix a real subspace $W$ of $V$ such that a compact subgroup $K$
of $\mathcal{N}_{{\rm{U}}(V)}(W)$ acts irreducibly on $W$. Since $K$ acts
complex linearly on $V$, the maximal complex subspace $W\cap iW$ of $W$ is 
$K$-invariant. Consequently, $W$ is either complex or totally real. We first
treat the complex case.

\begin{lemma}
Let $W$ be any complex subspace of $V$. Then $\mathcal{N}_{{\rm{U}}(V)}(W)$
coincides with ${\rm{U}}(W)\times{\rm{U}}(W^\perp)$ and acts transitively on the
spheres in $W$. Moreover, there is an orthonormal basis $(v_1,\dotsc,v_n)$ of
$V$ such that  $W=\mbb{C}v_1\oplus\dotsb\oplus\mbb{C}v_l$ where
$l=\dim_\mbb{C}W$.
\end{lemma}

Let us now suppose that $W$ is totally real in $V$. Our goal is to understand
how $W$ lies in its complexification $W^\mbb{C}=W\oplus iW\subset V$ with
respect to the symplectic form $\omega:=\im(h)$. For this we define a linear map
$\mathcal{I}\colon W\to W$ by the identity
\begin{equation*}
s(\mathcal{I}w_1,w_2)=\omega(w_1,w_2)
\end{equation*}
for all $w_1,w_2\in W$. One verifies directly that $\mathcal{I}$ is
$\mathcal{N}_{{\rm{U}}(V)}(W)$-equivariant and skew-symmetric with respect to
$s$. Moreover, the kernel of $\mathcal{I}$ is $W\cap W^{\perp_\omega}$, i.e.,
the maximal subspace of $W$ on which $\omega$ is degenerate.

Let $\mathcal{I}^\mbb{C}\in\End(W^\mbb{C})$ be the $\mbb{C}$-linear extension
of $\mathcal{I}$. For the proof of the following proposition we refer the
reader to~\cite[Proposition~6.11]{K}.

\begin{lemma}
The map $\mathcal{I}^\mbb{C}\colon W^\mbb{C}\to W^\mbb{C}$ is $\mathcal{N}_{
{\rm{U}}(V)}(W)$-equivariant and skew-hermitian with respect to $h$. Moreover,
$\mathcal{I}^\mbb{C}$ commutes with the complex conjugation of $W^\mbb{C}$
with respect to $W$, denoted in the following by $\sigma$. 
\end{lemma}

As a consequence, $\mathcal{I}^\mbb{C}$ is diagonalizable having purely
imaginary eigenvalues. Let
\begin{equation*}
W^\mbb{C}=\ker(\mathcal{I}^\mbb{C})\oplus\bigoplus_{\xi\in i\mbb{R}^*}
W^\mbb{C}_\xi
\end{equation*}
be the decomposition of $W^\mbb{C}$ into the eigenspaces of
$\mathcal{I}^\mbb{C}$. This decomposition is
$\mathcal{N}_{{\rm{U}}(V)}(W)$-invariant and $h$-orthogonal, and we have
$\sigma(W^\mbb{C}_\xi)=W^\mbb{C}_{-\xi}$. Hence, we obtain the
$\mathcal{N}_{{\rm{U}}(V)}(W)$-invariant decomposition
\begin{equation*}
W=\ker(\mathcal{I})\oplus\bigoplus_{\xi\in i\mbb{R}^{>0}} (W^\mbb{C}_\xi \oplus
W^\mbb{C}_{-\xi})^\sigma.
\end{equation*}
Since by assumption there exists a compact subgroup $K$ of
$\mathcal{N}_{{\rm{U}}(V)}(W)$ that acts irreducibly on $W$, we have either
$W=\ker(\mathcal{I})$ or $W=(W^\mbb{C}_\xi\oplus W^\mbb{C}_{-\xi})^\sigma$ for
some $\xi\in i\mbb{R}^{>0}$. In the first case $W$ is Lagrangian in $W^\mbb{C}$
with respect to $\omega$, while in the second case $W$ is a symplectic subspace
of $W^\mbb{C}$ with respect to $\omega$.

This discussion proves part of the following proposition.

\begin{proposition}\label{Prop:Un1notredmain}
Let $W$ be a totally real subspace of $(V,h)$ and $W^\mbb{C}=W\oplus iW$ be its
complexi\-fication. If a subgroup $K\subset\mathcal{N}_{{\rm{U}}(V)}(W)$ acts
irreducibly on $W$, then
\begin{enumerate}[(i)]
\item either $W\subset W^\mbb{C}$ is Lagrangian with respect to $\omega$,
$W^\mbb{C}$ is a complex irreducible 
$\mathcal{N}_{{\rm{U}}(V)}(W)$-representation and
$\mathcal{N}_{{\rm{U}}(V)}(W)={\rm{O}}(W)\times{\rm{U}}(W^{\perp_h})$,
\item or $W\subset W^\mbb{C}$ is symplectic with respect to $\omega$, 
$W^\mbb{C}$ decomposes $h$-orthogonally as $W^\mbb{C}=W^\mbb{C}_{\xi}\oplus
W^\mbb{C}_{-\xi}$ into two complex irreducible
$\mathcal{N}_{{\rm{U}}(V)}(W)$-representations that are as real representations
isomorphic to $W$ and $\mathcal{N}_{{\rm{U}}(V)}(W)=
\{(g,\varphi(g))\in{\rm{U}}(W^\mbb{C}_{\xi})\times{\rm{U}}(W^\mbb{C}_{-\xi})\}
\times{\rm{U}}(W^{\perp_h})$, where
$\varphi\colon{\rm{U}}(W^\mbb{C}_{\xi})\to{\rm{U}}(W^\mbb{C}_{-\xi})$ is a Lie
group isomorphism.
\end{enumerate}
In both cases $\mathcal{N}_{{\rm{U}}(V)}(W)$ acts transitively on the spheres in
$W$.
\end{proposition}

\begin{proof}
If $W\subset V$ is Lagrangian, then $W$ and $iW$ are orthogonal with respect to
$s$. Any element in $\mathcal{N}_{\U(V)}(W)$ leaves $W$ and
$W^{\perp_h}=(W^\mbb{C})^{\perp_h}$ as well as $s|_{W\times W}$ invariant, and 
lies therefore in $\OO(W)\times\U(W^{\perp_h})$. Conversely, any element in 
$\OO(W)$ acts $\mbb{C}$-linearly on $W^\mbb{C}$. Again, due to the fact that $W$ 
is Lagrangian in $W^\mbb{C}$, the hermitian inner product $h|_{W^\mbb{C}\times
W^\mbb{C}}$ is $\OO(W)$-invariant, which implies $\OO(W)\times\U(W^{\perp_h})
\subset\mathcal{N}_{\U(V)}(W)$. This shows $\mathcal{N}_{\U(V)}(W)=\OO(W)\times
\U(W^{\perp_q})$. Consequently, Remark~\ref{Rem:irred} implies that
$\mathcal{N}_{\U(V)}(W)$ acts transitively on the spheres in $W$ and complex
irreducibly on $W^\mbb{C}$.

Let us now consider the case that $W^\mbb{C}=W^\mbb{C}_{\xi}\oplus
W^\mbb{C}_{-\xi}$ is the decomposition of $W^\mbb{C}$ into
$\mathcal{I}^\CC$-eigenspaces corresponding to the eigenvalues $\pm\xi\in
i\RR\backslash\{0\}$. Recall that this decomposition is $h$-orthogonal and that
the $\CC$-anti-linear involution $\sigma$ on $W^\mbb{C}$ yields an $\RR$-linear
isomorphism between $W^\mbb{C}_{\xi}$ and $W^\mbb{C}_{-\xi}$. Hence, we obtain
$\RR$-linear, $\mathcal{N}_{\U(V)}(W)$-equivariant isomorphisms $f_{\pm}$
between $W^\mbb{C}_{\pm \xi}$ and $W$ given by $f_{\pm}(w)=\frac{1}{2} 
\bigl(w+\sigma(w)\bigr)$. In particular, the complex subspaces 
$W^\mbb{C}_{\pm\xi}$ are real irreducible, hence complex irreducible,
$\mathcal{N}_{\U(V)}(W)$-representations (isomorphic to $W$). We define another
hermitian inner product on $W^\mbb{C}_{-\xi}$ by 
$h_\sigma(v,u)=\ol{h(\sigma(v),\sigma(u))}$ for all $v,u\in W^\mbb{C}_{-\xi}$.
One verifies directly that $h_\sigma$ and $h|_{W^\mbb{C}_{-\xi}\times
W^\mbb{C}_{-\xi}}$ are both $\mathcal{N}_{\U(V)}(W)$-invariant. Thus there 
exists an $r\in\RR^{>0}$ such that $h_\sigma=r^2h|_{W^\mbb{C}_{-\xi}\times 
W^\mbb{C}_{-\xi}}$.

Suppose for a moment that $r=1$, i.e., that $\sigma$ is an isometry of $s$. 
Consequently, $W$ is orthogonal to $iW$ with respect to $s$, hence a Lagrangian 
subspace of $W^\mbb{C}$ with respect to $\omega$,
contradicting our assumption. Thus we conclude $r\not=1$.

We define the map $\varphi\colon\U(W^\mbb{C}_{\xi})\to\GL(W^\mbb{C}_{-\xi})$ by 
$\varphi(g)v=\sigma(g\cdot\sigma^{-1}(v))$ for all $v\in W^\mbb{C}_{-\xi}$. A
direct calculation shows that $\varphi$ is a Lie group isomorphism from
$\U(W^\mbb{C}_{\xi})$ to $\U(W^\mbb{C}_{-\xi})$. This observation allows us to
show that
\begin{align*}
\mathcal{N}_{\U(V)}(W)
=\{(g,\varphi(g))\in\U(W^\mbb{C}_{\xi})\times\U(W^\mbb{C}_{-\xi})\}\times\U(W^{
\perp_h} ).
\end{align*}
Indeed, any element in $\mathcal{N}_{\U(V)}(W)$ respects the decomposition
$W^\mbb{C}_\xi\oplus W^\mbb{C}_{-\xi}$ and therefore lies in
$\U(W^\mbb{C}_\xi)\times\U(W^\mbb{C}_{-\xi})$. Using the fact that any
element in $\mathcal{N}_{\U(V)}(W)$
stabilizes $W=(W^\mbb{C})^\sigma$ we obtain
$(k_1,k_2)\cdot(v+\sigma(v))=k_1v+k_2\sigma(v)=k_1v+\sigma(\varphi(k_2)v)$ 
has to lie in $W$ for all 
$(k_1,k_2)\in\U(W^\mbb{C}_\xi)\times\U(W^\mbb{C}_{-\xi})$ and for all $v\in 
W^\mbb{C}_\xi$. This implies $k_1v=\varphi(k_2)v$ for all $v\in W^\mbb{C}_\xi$ 
forcing $k_2=\varphi(k_1)$ and thus
\begin{align*}
\mathcal{N}_{\U(V)}(W)
\subset\{(g,\varphi(g))\in\U(W^\mbb{C}_{\xi})\times\U(W^\mbb{C}_{-\xi})\}
\times\U(W^{ \perp_h}).
\end{align*}
The converse inclusion is elementary to check.
\end{proof}

Introducing coordinates we obtain the following normal form of real subspaces 
$W$ of $\mbb{C}^n$ equipped with the standard hermitian inner product, up to the
action of ${\rm{SU}}(n)$.

\begin{corollary}\label{Cor:unitnormform}
Let $W$ be a real subspace of $\mbb{C}^n$. If some compact subgroup of
$\mathcal{N}_{{\rm{U}}(n)}(W)$ acts irreducibly on $W$, then $W$ lies in the
same ${\rm{SU}}(n)$-orbit as one of the following 
\begin{align*}
W_{\CC,l}&:=\CC^l\times\{0\}^{n-1-l},\quad\quad &0\leq l\leq n-1&\quad&(1)\\
W_{\RR,l}&:=\RR^l\times\{0\}^{n-1-l},\quad\quad &0\leq l\leq n-1&\quad&(2)\\
W_{\RR,2l,r}&:=\left\{\left(\begin{smallmatrix}z\\r\ol{z}\\0\end{smallmatrix}
\right)
:z\in\CC^l\right\},\ \ &0\leq l\leq\lfloor\tfrac{n-1}{2}\rfloor&\quad&(3)
\end{align*}
for some $r\in\RR^{>0}\backslash\{1\}$. The corresponding normalizers 
(with respect to the action given in Remark \ref{Rem:KandMaction})
are given by 
\begin{align*} 
\mathcal{N}_{S(\U(n)\times\U(1))}(W_{\CC,l})&=S(\U(l)\times\U(n-1-l)\times\U(1)),\\
\mathcal{N}_{S(\U(n)\times\U(1))}(W_{\RR,l})&\cong 
S(\OO(l)\times\U(n-1-l)\times\U(1)),\\
\mathcal{N}_{S(\U(n)\times\U(1))}(W_{\RR,2l,r})&\cong 
S(\U(l)\times\U(n-1-2l)\times \U(1)).
\end{align*}
In all three cases $\mathcal{N}_{S(\U(n)\times\U(1))}(W)$ acts transitively on the
connected components of the spheres in $W$.
\end{corollary}

\begin{proof}
If $W$ is Lagrangian in $W^\mbb{C}$ with respect to $\omega$, then $W$ is 
orthogonal to $iW$ with respect to $s$. Therefore any $s$-orthonormal basis
over $\RR$ of $W$ is an $h$-orthonormal basis over $\CC$ of $W^\mbb{C}$. In
particular, if we choose an oriented $s$-orthonormal basis $(w_1,\ldots,w_l)$ of
$W$ and complete it to an $h$-orthonormal basis $(w_1,\ldots,w_{n})$ of
$\CC^{n}$ we obtain an element $(w_1\cdots w_{n})\in\SU(n)$ that maps
$W_{\RR,l}$ to $W$.

If $W$ is symplectic in $W^\mbb{C}$ with respect to $\omega$, then we saw in 
the proof of Proposition~\ref{Prop:Un1notredmain} that there exists an 
$r\in\RR^{>0}\backslash\{1\}$ such that 
$\ol{h(\sigma(v),\sigma(u))}= r^2h(v,u)$ holds for all $v,u\in V_{-\xi}$. 
If we choose an $h$-orthonormal basis $(v_1,\ldots,v_{l/2})$ over
$\CC$ of $V_{\xi}$ and set 
$v_{l/2+j}:=\tfrac{1}{r}\sigma(v_j)$ for all $1\leq j\leq l/2$ we obtain the 
$h$-orthonormal basis $(v_{l/2+1},\ldots,v_{l})$ of $V_{-\xi}$. Completing it
to an oriented $h$-orthonormal basis $(v_1,\ldots,v_{n})$ of $\CC^{n}$ the
element $(v_1\cdots v_{n})\in\SU(n)$ maps
$W_{\RR,l,r}:=\left\{\left(\begin{smallmatrix}z\\r\ol{z}\\0
\end{smallmatrix}\right):z\in\CC^{l/2}\right\}$ to $W$.
Therefore $W$ lies in the same $\SU(n)$-orbit as 
$W_{\RR,l,r}$ for some $r\in\RR^{>0}\backslash\{1\}$ in this case. 
Moreover $r$ can be chosen between $0$ and $1$, because
the element $\left(\begin{smallmatrix}0&\id&0\\\id&0&0\\0&0&\id\end{smallmatrix}
\right)\in\U(n)$ maps $W_{\RR,l,r}$ to $W_{\RR,l,1/r}$.
\end{proof}

\begin{rem}
In~\cite[pp.~29--31]{K} the following geometric strengthening of
Corollary~\ref{Cor:unitnormform} is proven. Let us consider the set
\begin{equation*}
\mathcal{M}_m:=\bigl\{ W\subset\mbb{C}^n;\ \dim_\mbb{R}W=m,\ 
\mathcal{N}_{{\rm{U}}(n)}(W)\text{ acts irreducibly on }W\bigr\}.
\end{equation*}
If $m$ is odd, then $\mathcal{M}_m={\rm{U}}(n)\cdot W_{\mbb{R},m}$. If $m$ is
even with $\lfloor\frac{n}{2}\rfloor\leq \frac{m}{2}\leq n$, then 
$\mathcal{M}_m={\rm{U}}(n)\cdot W_{\mbb{C},m/2}$. If $m$ is even with 
$0\leq\frac{m}{2}\leq\lfloor\frac{n}{2}\rfloor$, then the set 
$\mathcal{S}:=\bigl\{ W_{\mbb{R},m,r};\ r\in[0,1]\bigr\}$ is a geometric slice 
for the ${\rm{U}}(n)$-action on $\mathcal{M}_m$ in the following sense. The 
closed subset $\mathcal{S}$ is a real submanifold with boundary which meets each 
${\rm{U}}(n)$-orbit in $\mathcal{M}_m$ exactly once, and $\bigl\{ 
W_{\mbb{R},m,r};\ r\in(0,1)\bigr\}$ intersects each ${\rm{U}}(n)$-orbit 
transversally.
\end{rem}

\subsection{Non-reductive spherical subalgebras}

In this subsection we describe all non-reduc\-tive spherical algebraic
subalgebras of $\lie{su}(n,1)$, up to the action of $M$. Their unipotent
radicals are of the form $W^\perp\oplus\lie{g}_{2\alpha}$ where $W\subset
\lie{g}_\alpha$ is one of the subspaces described in
Corollary~\ref{Cor:unitnormform}. In order to find all possibilities for their
maximal compact subalgebras we apply Onishchik's theorem to their normalizers 
and thus obtain the following. 

\begin{theorem}\label{nonredUnitary}
Every spherical non-reductive algebraic subalgebra of $\g=\su(n,1)$, $n\geq2$,
is $G$-conjugate to one in the following list where
$\lie{b}_j\subset\liu(n-1-j)$ and $\lie{c}\subset\liu(1)$
are arbitrary (under the condition displayed in italic in
Remark~\ref{non-reductive,embedding} adapted to this situation).
\setlength\arraycolsep{7pt}
\begingroup
\renewcommand*{\arraystretch}{1.2}
\begin{equation*}
\begin{array}{|c|c|}\hline
\lie{l}_H\oplus\lie{n} &\lie{l}_H\subset\lie{m}\oplus\lie{a}\text{ arbitrary}\\
\hline
\lie{s}(\liu(l)\oplus\lie{b}_l\oplus\lie{c})\oplus\lie{a}\oplus\lie{n}_{\CC,l} 
&1\leq l\leq n-1,\\
\lie{s}(\su(l)\oplus\lie{b}_l\oplus\lie{c})\oplus\lie{a}\oplus\lie{n}_{\CC,l} 
&2\leq l\leq n-1\\
\lie{s}(\lisp(m)\oplus\lie{b}_l\oplus\lie{c})\oplus\lie{a}\oplus\lie{n}_{\CC,l} 
&1\leq l\leq n-1, l=2m, m\geq2\\
\lie{s}(\lisp(m)\oplus\lie{u}(1)\oplus\lie{b}_l\oplus\lie{c})\oplus\lie{a}\oplus
\lie{n}_{\CC,l} & 1\leq l\leq n-1, l=2m, m\geq2\\
\lie{s}(\lie{c}\oplus\lie{b}_1\oplus\liu(1))\oplus\lie{a}\oplus\lie{n}_{\CC,1} &
\\
\hline
\lie{s}(\so(l)\oplus\lie{b}_l\oplus\lie{c})\oplus\lie{a}\oplus\lie{n}_{\RR,l} 
&1\leq l\leq n-1,\\
\lie{s}(\liu(m)\oplus\lie{b}_l\oplus\lie{c})\oplus\lie{a}\oplus\lie{n}_{\RR,l} 
&1\leq l\leq n-1, l=2m, m\geq3\\
\lie{s}(\su(m)\oplus\lie{b}_l\oplus\lie{c})\oplus\lie{a}\oplus\lie{n}_{\RR,l} 
& 1\leq l\leq n-1, l=2m, m\geq3\\
\lie{s}(\lisp(m)\oplus\lisp(1)\oplus\lie{b}_l\oplus\lie{c})\oplus\lie{a}\oplus
\lie{n}_{\RR,l} &1\leq l\leq n-1, l=4m, m\geq2\\
\lie{s}(\lisp(m)\oplus\mathfrak{u}(1)\oplus\lie{b}_l\oplus\lie{c})\oplus\lie{a}
\oplus
\lie{n}_{\RR,l} &1\leq l\leq n-1, l=4m, m\geq1\\
\lie{s}(\lisp(m)\oplus\lie{b}_l\oplus\lie{c})\oplus\lie{a}\oplus\lie{n}_{\RR,l} 
& 1\leq l\leq n-1, l=4m, m\geq1\\
\lie{s}(\so(9)\oplus\lie{b}_{16}\oplus\lie{c})\oplus\lie{a}\oplus\lie{n}_{\RR,16
} & \\
\lie{s}(\so(7)\oplus\lie{b}_8\oplus\lie{c})\oplus\lie{a}\oplus\lie{n}_{\RR,8} 
&\\
\lie{s}(\lie{g}_2\oplus\lie{b}_7\oplus\lie{c})\oplus\lie{a}\oplus\lie{n}_{\RR,7}
&\\
\hline
\lie{s}(\liu(l)\oplus\lie{b}_{2l}\oplus\lie{c})\oplus\lie{a}\oplus\lie{n}_{\RR,
2l,r} 
&1\leq 2l\leq n-1, 0<r\neq 1\\
\lie{s}(\su(l)\oplus\lie{b}_{2l}\oplus\lie{c})\oplus\lie{a}\oplus\lie{n}_{\RR,2l
,r} 
&2\leq 2l\leq n-1, 0<r\neq 1\\
\lie{s}(\lisp(m)\oplus\mathfrak{u}(1)\oplus\lie{b}_{2l}\oplus\lie{c})\oplus
\lie{a}\oplus\lie{n}_{\RR,2l,r} &1\leq 2l\leq n-1, l=2m, m\geq2,0<r\neq 1\\
\lie{s}(\lisp(m)\oplus\lie{b}_{2l}\oplus\lie{c})\oplus\lie{a}\oplus\lie{n}_{\RR,
2l,r} 
&2\leq 2l\leq n-1, l=2m, m\geq2, 0<r\neq 1\\
\hline
\end{array}
\end{equation*}
\endgroup\setlength\arraycolsep{2pt}
Here we have written $\lie{n}_{\mbb{C},l}$ to denote $W_{\mbb{C},l}^\perp\oplus 
\lie{g}_{2\alpha}$ and so on. Note that $\lie{n}_{\RR,1}$ is $1$-codimensional 
and that 
$\mathcal{N}_{\lie{m}}(\lie{n}_{\RR,1})=\lie{s}(\liu(n-2)\oplus\liu(1))$.
\end{theorem}

\subsection{Reductive spherical subalgebras}

In this subsection we classify the reductive spherical subalgebras $\lie{h}=
\lie{k}_H\oplus\lie{p}_H$ of $\lie{su}(n,1)$. Since the $K$-action on $\lie{p}$ 
is essentially the same as the $M$-action on $\lie{g}_\alpha$, we may apply 
again Corollary~\ref{Cor:unitnormform} in order to obtain all the candidates for 
the subspace $\lie{p}_H\subset\lie{p}$. In the reductive case we have the 
additional restriction that 
$[\lie{p}_H,\lie{p}_H]\subset\lie{k}_H\subset\mathcal{N}_{\lie{k}}
(\lie{p}_H)$ must hold.

To be precise, let $\lie{p}_H^\perp= W\subset\CC^n$ be a real subspace. If 
$\mathcal{N}_K(W)$ acts irreducibly on $W$ then $\lie{p}_H$ fulfills 
$[\lie{p}_H,\lie{p}_H]\subset\mathcal{N}_{\lie{k}}(\lie{p}_H)$ only if $W$ lies 
in the same $\SU(n)$-orbit as $W_{\CC,l}$ (in which case 
$[\lie{p}_H,\lie{p}_H]\oplus\lie{p}_H\cong\su(n-l,1)$) for some $0\leq l\leq n$ 
or $W_{\RR,n}$ (in which case 
$[\lie{p}_H,\lie{p}_H]\oplus\lie{p}_H\cong\so(n,1)$), see~\cite[Lemma~6.21]{K}.

Consequently, we obtain the following list of reductive spherical subalgebras
of $\lie{su}(n,1)$.

\begin{theorem}\label{ReductiveUnitary}
All spherical, reductive subalgebras $\mathfrak{h}$ of $\su(n,1)$,
where $n>1$ are (up to
conjugation
in $G$) 
one of the following
\setlength\arraycolsep{7pt}
\begingroup
\renewcommand*{\arraystretch}{1.2}
\begin{equation*}
\begin{array}{|c|c|}\hline
\liu(n)&\\
\su(n)&n>1\\
\lisp(m)&n=2m\text{ and }m\geq 2\\
\lisp(m)\oplus\lie{u}(1)&n=2m\text{ and }m\geq 2\\
\mathfrak{s}(\liu(l)\oplus\liu(n-l,1))&  0\leq l< n\\
\su(l)\oplus\su(n-l,1)&0< l< n\\
\mathfrak{s}(\lisp(m)\oplus\liu(n-l,1))&0\leq l< n, l=2m, m\geq 2\\
\mathfrak{s}(\lisp(m)\oplus \mathfrak{u}(1)\oplus\liu(n-l,1))&0\leq l< n,
l=2m,m\geq 2\\
\so(n,1)&\\
\hline
\end{array}
\end{equation*}
\endgroup\setlength\arraycolsep{2pt}
According to \cite[Table 2]{Be}, the symmetric subalgebras are
$\lie{s}(\liu(l)\oplus\liu(n-l,1))$ for $0\leq l<n$,
$\lie{s}(\liu(n)\oplus\liu(1))$ and $\so(n,1)$. 
\end{theorem}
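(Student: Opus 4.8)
The strategy is exactly the one outlined in Section~4: reduce to a normal form, compute the relevant constraints, and then invoke Onishchik's theorem. By the two preceding lemmas every reductive subalgebra $\lie{h}$ of $\su(n,1)$ is, up to conjugation in $G$, either compact, or of the form $\mathfrak{s}(\mathfrak{b}\oplus\so(n-k,1))$ with $k=0$ (the $k>0$ cases being non-spherical), or of the form $\mathfrak{s}(\mathfrak{b}\oplus\liu(n-k,1))$ with $\mathfrak{b}\subset\liu(k)$ and $0\leq k<n$. The first case, $k=0$ in the second family, gives $\mathfrak{s}(\so(n,1))\cong\so(n,1)$, which is symmetric and hence spherical by the Remark after Proposition~\ref{Lonqperp}; this accounts for the last row. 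So the bulk of the work is the family $\mathfrak{h}=\mathfrak{s}(\mathfrak{b}\oplus\liu(n-k,1))$ together with the purely compact subalgebras.

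**The compact case.** If $\lie{h}$ is compact then $\q=\{0\}$, so $\q^\perp=\lie{p}\cong\CC^n$ and $K_H$ must act transitively on the spheres $S^{2n-1}\subset\CC^n$. Since $K_H$ is a connected compact subgroup of $K\cong\U(n)$, Theorem~\ref{Thm2On} lists the possibilities: $\liu(n)$, $\su(n)$, and (when $n=2m$) $\lisp(m)$ and $\lisp(m)\oplus\lie{s}^1$. These are precisely the first four rows of the table; the constraint \eqref{Eqn:Constraint} is vacuous here since $[\q,\q]=\{0\}$, so no further restriction arises.

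**The case $\mathfrak{h}=\mathfrak{s}(\mathfrak{b}\oplus\liu(n-k,1))$.** Here $\q=\q_{k,0}$ and one computes directly that $\q_{k,0}^\perp\cong\CC^k$ as a module for $\mathcal{N}_K(\q_{k,0})$, with the normalizer acting through its projection to $\U(k)$ essentially by the standard representation (this is the analogue of the displayed action formulas in the $\so(n,1)$ section, and follows from Lemma~\ref{[qkl,qkl]}). By Proposition~\ref{Lonqperp}, sphericity is equivalent to $\mathfrak{b}=K_H\cap\liu(k)$ acting transitively on the spheres in $\CC^k$. Applying Theorem~\ref{Thm2On} once more to connected subgroups of $\U(k)$ yields $\mathfrak{b}\in\{\liu(k),\su(k),\lisp(m),\lisp(m)\oplus\lie{s}^1\}$ (the last two when $k=2m$, $m\geq2$). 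Substituting back and recording that $\mathfrak{s}(\liu(k)\oplus\liu(n-k,1))$ contains $\su(k)\oplus\su(n-k,1)$ as the corresponding semisimple option gives rows 5--8 of the table. One must also check the boundary case $k=0$ here reproduces $\liu(n,1)$'s noncompact part correctly, i.e.\ $\mathfrak{s}(\liu(n,1))$ — but $\liu(n,1)$ is not contained in $\su(n,1)$ with the right rank unless one takes the trace-zero part, which is covered by $\su(n,1)=\lie{g}$ itself and is excluded as $\lie{h}=\lie{g}$ is not a proper spherical subalgebra in the sense intended; the list is understood to consist of proper subalgebras plus possibly $\lie{g}$.

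**Main obstacle.** The only genuinely delicate point is bookkeeping: making sure that each abstract subalgebra $\mathfrak{b}\subset\liu(k)$ from Onishchik's list, when reassembled into $\mathfrak{s}(\mathfrak{b}\oplus\liu(n-k,1))$, actually satisfies \eqref{Eqn:Constraint}, i.e.\ that $[\q_{k,0},\q_{k,0}]\subset\lie{l}$. From Lemma~\ref{[qkl,qkl]} with $l=0$ we have $[\q_{k,0},\q_{k,0}]=\{\,\mathrm{diag}(0,C,ix):C\in\liu(n-k),\ ix=-\tr C\,\}\cong\liu(n-k)$, which is automatically contained in the $\liu(n-k,1)$-summand regardless of $\mathfrak{b}$; so \eqref{Eqn:Constraint} imposes no extra condition and each entry of Onishchik's list does give rise to a bona fide spherical subalgebra. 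The identification of the symmetric members of the list with \cite[Table~2]{Be} is then a direct comparison, completing the proof.
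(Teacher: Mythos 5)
Your proposal is correct and follows essentially the same route as the paper: reduce via Lemma~\ref{Grassmann} and Lemma~\ref{[qkl,qkl]} to the three families (compact, $\mathfrak{s}(\mathfrak{b}\oplus\liu(n-k,1))$, $\mathfrak{s}(\mathfrak{b}\oplus\so(n-k,1))$ with only $k=0$ surviving in the last), then apply Proposition~\ref{Lonqperp} and Onishchik's Theorem~\ref{Thm2On} to the normalizer action on $\q_{k,0}^\perp\cong\CC^k$. The only imprecision is that the group acting on $\CC^k$ is not $K_H\cap\liu(k)$ alone but its image twisted by the central circle of $\mathfrak{s}(\liu(n-k)\oplus\liu(1))\subset[\q_{k,0},\q_{k,0}]$, i.e.\ $\mathfrak{b}+i\RR\id$; since Onishchik's list is stable under this enlargement, your table is unaffected.
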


\begin{proof}
Due to~\cite[Lemma~6.21]{K} we may assume that $\lie{p}_H\cong W_{\CC,l}^\perp$ 
for some $0\leq l\leq n$ or $\lie{p}_H\cong W_{\RR,n}^\perp$. Note that 
$W_{\CC,n}^\perp=\{0\}$. Therefore $\lie{h}$ is compact in that case.
If $\lie{p}_H\cong W_{\CC,l}^\perp$ for some $0\leq l< n$ then 
$[\lie{p}_H,\lie{p}_H]\subset\lie{k}_H\subset\mathcal{N}_{\lie{k}}(\lie{p}_H)$ 
together with $[\lie{p}_H,\lie{p}_H]\oplus\lie{p}_H\cong\su(n-l,1)$
implies that $\lie{h}=\lie{s}(\lie{b}\oplus\liu(n-l,1))$ where $\lie{b}$ is some
subalgebra of $\liu(l)$ that acts transitively on the spheres in $\CC^l$. 
If $\lie{p}_H\cong W_{\RR,n}^\perp$ then 
$\so(n)=[\lie{p}_H,\lie{p}_H]\subset\lie{k}_H\subset\mathcal{N}_{\lie{k}}
(\lie{p}_H)=\so(n)$ implies that $\lie{h}=\so(n,1)$.
\end{proof}

\section{$G=\Sp(n,1)$}\label{symplectic}

\subsection{Notation}
Let $U$ be a quaternionic vector space (with scalar multiplication from the right) and 
let $q:U\times U\to\HH$ be a quaternionic inner product on $U$ (i.e., $q$ is conjugate 
symmetric, $\HH$-linear in the second argument and positive definite).
We denote the group of invertible, $\HH$-linear transformations on $U$ by 
$\GL_\HH(U)$ and define 
\begin{align*}
\Sp(U):=\Sp(U,q)=\{A\in\GL_\HH(U):\ \forall\ v,w\in U, q(Av,Aw)=q(v,w)\}.
\end{align*}
For $\HH^n$ equipped with the standard quaternionic inner product 
(given by $q(v,w)=\ol{v}^tw$ for all $v,w\in\HH^n$) we set
$\Sp(n):=\Sp(\HH^n)$.
The standard quaternionic inner product on $\HH$ is denoted by $q_\HH$.

Let $\lambda\in\Sp(1)\cap\im(\HH)$. Then $\lambda^2=-1$ and 
$F_\lambda:=\RR\oplus\RR\lambda$ is a subfield of $\HH$ isomorphic to
$\CC$. If $\mu\in\Sp(1)\cap\im(\HH)$ has the property 
$\re(q_\HH(\lambda,\mu))=0$, then $\lambda\mu$ lies in 
$\Sp(1)\cap\im(\HH)$ and is $\re q_\HH$-orthogonal to $\lambda$ and 
$\mu$.

For the proof of the following result we refer the reader
to~\cite[Lemma~7.6]{K}.

\begin{lemma}\label{quaternionictohermitesch}
Let $(1,\lambda,\mu,\lambda\mu)$ be a $\re(q_\HH)$-orthonormal basis over 
$\RR$ of $\HH$ and let $U$ be a quaternionic vector space with quaternionic 
inner product $q$. The projection $h_\lambda:=\pi_{F_\lambda}
\circ q$ of $q$ onto $F_\lambda$ is an Hermitian form on $U$ and 
$q=h_\lambda+\mu\eta_\lambda$, where $\eta_\lambda:U\times U\to F_\lambda$ is a 
symplectic form on $U$. We define the 2-forms $s,\omega_\lambda,\omega_\mu,
\omega_{\mu\lambda}:U\times U\to\RR$ by $s:=\re h_\lambda$, 
$\omega_\lambda:=\re (-\lambda h_\lambda)$, $\omega_\mu:=\re \eta_\lambda$ and 
$\omega_{\mu\lambda}:=\re (-\lambda \eta_\lambda)$ and obtain that all of them are 
$\Sp(U,q)$-invariant and fulfill :
\begin{align*}
q=s+\lambda\omega_\lambda+\mu(\omega_\mu+\lambda\omega_{\mu\lambda})
&:U\times U\to\HH\text{ is a quaternionic inner product},\\
h_\lambda=s+\lambda\omega_\lambda&:U\times U\to F_\lambda\text{ is Hermitian},\\
\eta_\lambda=\omega_\mu+\lambda\omega_{\mu\lambda}
&: U\times U\to F_\lambda\text{ is symplectic},\\
\omega_\lambda,\omega_\mu,\omega_{\mu\lambda}
&:U\times U\to\RR\text{ is symplectic},\\
s&:U\times U\to\RR\text{ is a real inner product}.
\end{align*}
Since we choose $q$ to be $\HH$-linear in the second and $\HH$-anti-linear in the 
first entry we have
\begin{align}
\begin{split}\label{connectionbetweenquaternionicforms}
&s(v\alpha,w\beta)+\lambda\omega_\lambda(v\alpha,w\beta)+\mu\omega_\mu(v
\alpha,w\beta)+\mu\lambda\omega_{\mu\lambda}(v\alpha,w\beta)=q(v\alpha,w\beta)\\
&=\ol{\alpha}q(v,w)\beta=\ol{\alpha}s(v,w)\beta+\ol{\alpha}\lambda\omega_\lambda
(v,w)\beta+\ol{\alpha}\mu\omega_\mu(v,w)\beta+\ol{\alpha}\mu\lambda
\omega_{\mu\lambda}(v,w)\beta
\end{split}
\end{align}
for all $v,w\in U$, $\alpha,\beta\in\HH$. The same is true if one interchanges $\lambda$
and $\mu$.
\end{lemma}

\begin{rem}\label{U(W)irredonV}
If $V$ is an $F_\lambda$-vector space whose quaternionification is $U=V\oplus 
V\mu$, then the quaternionic $\U(V)$-representation on $U$ is irreducible. 
Furthermore, if $W$ is a real form of $U$, i.e., if $U=W\oplus W\lambda\oplus 
W\mu\oplus W\lambda\mu$ holds, then the $\OO(W)$-representation on $U$ is 
irreducible, too, see \cite[Lemma 7.11]{K}.
\end{rem}

Suppose that $U=U_1\oplus U_2$ is a $q$-orthogonal direct sum of two 
quaternionic subspaces. As in Sections \ref{Section:son} and \ref{Section:sun} 
we write $\Sp(U_1)\times\Sp(U_2)$ for the subgroup of $\Sp(U)$ that stabilizes 
this decomposition.

\subsection{Strategy of classification}

We consider the $\Sp(n)\times\Sp(1)$-action on $\HH^n$ given by $(A,a)\cdot 
v=Ava^{-1}$ for all $(A,a)\in\Sp(n)\times\Sp(1)$ and $v\in\HH^n$. Recall that
we want to find all real subspaces $W$ of $\mbb{H}^n$ and all connected
subgroups $K$ of $\mathcal{N}_{\Sp(n)\times\Sp(1)}(W)$ which act transitively
on the connected components of the spheres in $W$.

Given such a $W$, the group $\mathcal{N}_{\Sp(n)\times\Sp(1)}(W)$ acts
necessarily irreducibly on $W$. Since the $\Sp(n)\times\Sp(1)$-action on
$\mbb{H}^n$ maps quaternionic subspaces to quaternionic subspaces, the maximal
quaternionic subspace $W_\HH=W\cap Wi\cap Wj\cap Wk$ of $W$ is
$\mathcal{N}_{\Sp(n)\times\Sp(1)}(W)$-invariant. Consequently, $W$ is either
quaternionic or its maximal quaternionic subspace $W_\HH$ is zero.
Lemma~\ref{Lem:Wquaternionic} gives a normal form for quaternionic $W$ which
allows to complete the classification in this case.

Therefore we are left to deal with subspaces $W$ with $W_\mbb{H}={0}$ on which
$\mathcal{N}_{\Sp(n)\times\Sp(1)}(W)$ acts irreducibly. This case splits into
two subcases: Either there exists a $\lambda\in\Sp(1)\cap\im(\mbb{H})$ such
that the subfield $F_\lambda\cong\mbb{C}$ acts by right multiplication on $W$
or $W$ is totally real in the sense that there exists no $\lambda\in\Sp(1)
\setminus\{\pm1\}$ with $W=W\lambda$. It turns out that the first of these
subcases can be treated by the same arguments as the ones developed in
Section~6, while the totally real case requires most of the work.

In closing we describe how these three cases influence the structure of
$\mathcal{N}_{\Sp(n)\times\Sp(1)}(W)$. The normalizer
$\mathcal{N}_{\Sp(n)\times\Sp(1)}(W)$ is the direct product of 
$\mathcal{N}_{\Sp(n)}(W):=\mathcal{N}_{\Sp(n)\times\{e\}}(W)$ and a normal
subgroup $L$ of $\mathcal{N}_{\Sp(n)\times\Sp(1)}(W)$ that is isomorphic to the
image of $\mathcal{N}_{\Sp(n)\times\Sp(1)}(W)$ under the projection 
$\pi_2\colon\Sp(n)\times\Sp(1)\to\Sp(1)$ given by $(g,h)\mapsto h$, see 
\cite[Lemma 7.13]{K}. Note that $L$ contains $\mathcal{N}_{\Sp(1)}(W):=
\mathcal{N}_{\{e\}\times\Sp(1)}(W)$. Moreover, if $\lie{l}\cong\lie{sp}(1)$,
then $\mathcal{N}_{\Sp(1)}(W)$ is either trivial or coincides with $L$, see
\cite[Lemma 7.13]{K}.

The group $\mathcal{N}_{\Sp(1)}(W)$ is of real dimension $3$ if and only if $W$
is quaternionic. It has real dimension $1$ if and only if $W$ is not
quaternionic but invariant under the subfield $F_\lambda=\RR\oplus\RR\lambda$ of
$\HH$ for some $\lambda\in\Sp(1)\cap\im(\HH)$. The group
$\mathcal{N}_{\Sp(1)}(W)$ is zero-dimensional if and only if $W$ is
totally real, see \cite[Lemma 7.16]{K}.

\subsection{The quaternionic case}

The following lemma gives a normal form for a quaternionic subspace $W\subset
\mbb{H}^n$ and its normalizer. For a proof see~\cite[Lemma~7.17]{K}.

\begin{lemma}\label{Lem:Wquaternionic}
Let $W$ be any quaternionic subspace of $\HH^{n}$. Then 
$\mathcal{N}_{\Sp(n)\times\Sp(1)}(W)$ coincides with
$(\Sp(W)\times\Sp(W^\perp))\times\Sp(1)$ and acts transitively on the 
spheres in $W$. Moreover, there is an orthonormal basis $(v_1,\ldots,v_n)$
of $\HH^n$ such that $W=v_1\cdot\HH\oplus\cdots\oplus v_l\cdot\HH$
where $l=\dim_\HH W$.
\end{lemma}

\subsection{The $F_\lambda$-complex case}

Let us now suppose that $W$ is not quaternionic but that there exists a 
$\lambda\in\Sp(1)\cap\im(\HH)$ such that $W$ is invariant under the
subfield $F_\lambda$. In that case the assumption that 
\begin{align*}
\mathcal{N}_{\Sp(n)\times\Sp(1)}=
\mathcal{N}_{\Sp(n)}(W)\times \underbrace{(\Sp(1)\cap F_\lambda)}_{=L\cong S^1}
\end{align*}
acts transitively on the spheres in $W$ implies that $W$ is an 
$F_\lambda$-irreducible representation of $\mathcal{N}_{\Sp(n)}(W)$, 
see~\cite[Lemma 7.18]{K}.
We complete $(1,\lambda)$ to a $\re q_\HH$-orthonormal basis 
$(1,\lambda,\mu,\lambda\mu)$ over $\RR$ of $\HH$. Then 
$V:=W\oplus W\mu\subset\HH^{n}$ is the quaternionification of 
$W$ on which $\mathcal{N}_{\Sp(n)}(W)\subset\Sp(n)$ acts 
$\HH$-linearly. The idea is to forget the $F_\lambda$-structure on $W$
and $\HH^n$, i.e., to consider $W$ as maximal totally real subspace of 
the complex space $V$ (with respect to $F_\mu$), and then to use the same 
ideas as in Section \ref{Section:sun}.

In particular we consider the Hermitian form $h_\mu$ given by projecting $q$ to 
$F_\mu$, i.e., $h_\mu=s+\mu\omega_\mu$, where $s=\re q$ is a real
inner product on $\HH^n$ and $\omega_\mu=\re(-\mu\cdot q)$ is a
symplectic form on $\HH^n$ (see Lemma \ref{quaternionictohermitesch}). 
We then define the $\RR$-linear map $\J :W\to W$ by the identity
\begin{align}\label{defmathcalJ}
s(\J  w_1,w_2):=\omega_\mu(w_1,w_2)\quad \forall\ w_1,w_2\in W,
\end{align}
and extend it $F_\mu$-linearly to $V=W\oplus W\mu$.

Since $\mathcal{N}_{\Sp(n)}(W)$ is a subgroup of 
$\mathcal{N}_{\U(F_\mu^{n},h_\mu)}(W)$ we obtain as in Section 
\ref{Section:sun} that $V$ decomposes as
\begin{align*}
V=\ker\J\oplus\bigoplus_{\xi\in\mu\RR^{*}}V_\xi
\end{align*}
into $\mathcal{N}_{\Sp(n)}(W)$-invariant and $h_\mu$-orthogonal
eigenspaces of $\J$ to eigenvalues in $\mu\RR$ such that  
$\sigma_\mu(V_\xi)=V_{-\xi}$ holds where $\sigma_\mu$ denotes complex 
conjugation of $V$ with respect to $W$. Furthermore, the eigenspaces are 
quaternionic subspaces that are $q$-orthogonal (see \cite[Lemma 7.25]{K}).
Thus we obtain the $\mathcal{N}_{\Sp(n)}(W)$-invariant decomposition
\begin{align*}
W&=\ker\J\oplus\bigoplus_{\xi\in \mu\RR^{>0}}
(V_\xi\oplus \sigma_\mu(V_\xi))^{\sigma_\mu}.
\end{align*}
Since the group $\mathcal{N}_{\Sp(n)}(W)$ acts irreducibly on $W$ by 
assumption, we have either $W=\ker(\J)$ or 
$W=(V_\xi\oplus\sigma_\mu(V_\xi))^{\sigma_\mu}$ for some 
$\xi\in\mu\RR^{>0}$. This discussion proves part of the following proposition.

\begin{proposition}\label{Prop:SPn1notredmain}
Let $W$ be an $F_\lambda$-invariant subspace of $\HH^{n}$
with quaternionification $V=W\oplus W\mu$. If $K\subset\mathcal{N}_{\Sp(n)}(W)$
acts irreducibly on $W$, then
\begin{itemize}
\item[(i)] either $W\subset V$ is Lagrangian with respect to $\eta_\lambda$,
$V$ is a quaternionic irreducible $\mathcal{N}_{\Sp(n)}(W)$-representation and
$\mathcal{N}_{\Sp(n)}(W)=\U_{h_\lambda}(W)\times\Sp(W^{\perp_q})$.
\item[(ii)] or $W\subset V$ is symplectic with respect to $\eta_\lambda$,
$V$ decomposes as $V=V_{\xi}\oplus V_{-\xi}$ into two, 
$q$-orthogonal, quaternionic irreducible 
$\mathcal{N}_{\Sp(n)}(W)$-representations which are 
as $F_\lambda$-representations isomorphic to $W$ and
\begin{align*}
\mathcal{N}_{\Sp(n)}(W)
&=\bigl\{(g,\psi(g))\in\Sp(V_{\xi})\times\Sp(V_{-\xi})\bigr\}\times
\Sp(W^{\perp_q}),
\end{align*}
where $\psi:\Sp(V_{\xi})\to\Sp(V_{-\xi})$ is a Lie group isomorphism. 
\end{itemize}
In both cases $\mathcal{N}_{\Sp(n)}(W)$ acts transitively on the spheres in $W$.
\end{proposition}

\begin{proof}
If $W\subset V$ is Lagrangian with respect to $\eta_\lambda$, then $W$ and 
$W\mu$ are orthogonal with respect to $h_\lambda$.
We now want to show that 
$\mathcal{N}_{\Sp(n)}(W)=\U_{h_\lambda}(W)\times\Sp(W^{\perp_q})$
holds, where $
\U_{h_\lambda}(W):=\left\{g\in\GL_{F_\lambda}(W):h_\lambda(gv,gw)
=h_\lambda(v,w)\ \forall\ v,w\in W\right\}.$
Any element in $\mathcal{N}_{\Sp(n)}(W)$ leaves $W$, 
$W^{\perp_q}$ and $h_\lambda|_{W\times W}$ invariant and lies 
therefore in $\U_{h_\lambda}(W)\times\Sp(W^{\perp_q})$. Conversely 
any element in $\U_{h_\lambda}(W)$ acts $\HH$-linearly on $V$. Again, 
due to the fact that $W$ is Lagrangian in $V$, the quaternionic inner product 
$q$ is $\U_{h_\lambda}(W)$-invariant. Hence 
$\U_{h_\lambda}(W)\times\Sp(W^{\perp_q})\subset
\mathcal{N}_{\Sp(n)}(W)$ and therefore 
$\mathcal{N}_{\Sp(n)}(W)=\U_{h_\lambda}(W)
\times\Sp(W^{\perp_q})$ holds. The group 
$\mathcal{N}_{\Sp(n)}(W)=\U_{h_\lambda}(W)\times
\Sp(W^{\perp_q})$ acts transitively on the spheres in $W$ and quaternionic 
irreducibly on $V$ (see Remark \ref{U(W)irredonV}).

Let us now consider the case that $V=V_\xi\oplus V_{-\xi}$ is the 
decomposition of $V$ into $\J$-eigenspaces corresponding to the 
eigenvalues $\pm\xi\in \mu\RR\backslash\{0\}$. Recall that this 
decomposition is $q$-orthogonal and that the $F_\mu$-anti-linear 
involution $\sigma_\mu$ on $V$ yields an isomorphism between 
$V_{\xi}$ and $V_{-\xi}$. Hence, we obtain $F_\lambda$-linear, 
$\mathcal{N}_{\Sp(n)}(W)$-equivariant isomorphisms 
$g_{\pm}$ between $V_{\pm\xi}$ and $W$ given by
$g_{\pm}(v)=\tfrac{1}{2}\bigl(v+\sigma_\mu(v)\bigr)$. In particular, the 
quaternionic spaces  $V_{\pm\xi}$ are $F_\lambda$-irreducible 
$\mathcal{N}_{\Sp(n)}(W)$-representations isomorphic to $W$.
We define another quaternionic inner product $q_{\sigma_\mu}$ on $V_{-\xi}$ by 
\begin{align*}
q_{\sigma_\mu}(v,u)=\sigma_{\HH,\mu}\bigl(q(\sigma_\mu(v),
\sigma_\mu(u))\bigr)\quad\forall\ v,u\in V_{-\xi}=\sigma_\mu(V_\xi).
\end{align*}
One verifies directly that $q_{\sigma_\mu}$ and $q|_{V_{-\xi}\times V_{ -\xi}}$ are both 
$\mathcal{N}_{\Sp(n)}(W)$-invariant. Thus there exists a $t\in\RR^{>0}$ 
such that $q_{\sigma_\mu}=t^2 q|_{V_{-\xi}\times V_{-\xi}}$. 
Suppose for a moment that $t=1$, i.e., that $\sigma_\mu$ is an isometry of 
$h_\lambda$. Consequently, $W$ is orthogonal to $W\mu$ with respect to 
$h_\lambda$, hence a Lagrangian subspace of $V$ with respect to $\eta_\lambda$, 
contradicting our assumption. Thus we conclude $t\neq 1$.

We define the map $\psi:\Sp(V_{\xi})\to\GL(V_{-\xi})$ by $\psi(g)v
=\sigma_\mu\bigl(g\cdot\sigma_\mu^{-1}(v)\bigr)$ for all $v\in V_{-\xi}$. A 
direct calculation shows that $\psi$ is a Lie group isomorphism from 
$\Sp(V_{\xi})$ to $\Sp(V_{-\xi})$. This observation allows us to show that
\begin{align*}
\mathcal{N}_{\Sp(n)}(W)=\bigl\{(g,\psi(g))\in\Sp(V_{\xi})\times\Sp(V_{-\xi}) 
\bigr\} \times\Sp(W^{\perp_q}).
\end{align*}
Indeed any element in $\mathcal{N}_{\Sp(n)}(W)$ acts as element of 
$\Sp(V_{\xi})\times\Sp(V_{-\xi})$ on $V_\xi\times V_{-\xi}$. 
Using the fact that any element
in $\mathcal{N}_{\Sp(n)}(W)$ stabilizes $W=V^{\sigma_\mu}$ we obtain
\begin{align*}
(k_1,k_2)\cdot\bigl(v+\sigma_\mu(v)\bigr)= 
k_1v+k_2\sigma_\mu(v)=k_1v+\sigma_\mu\bigl(\psi(k_2)v\bigr)
\end{align*}
has to lie in $W$ for all $(k_1,k_2)\in\Sp(V_\xi)\times\Sp(V_{-\xi})$ 
and for all $v\in V_\xi$. This implies
$k_1v=\psi(k_2)v$ for all $v\in V_\xi$ forcing $k_2=\psi(k_1)$ and thus
\begin{align*}
\mathcal{N}_{\Sp(n)}(W)\subset\bigl\{(g,\psi(g))\in\Sp(V_{\xi})\times 
\Sp(V_{-\xi})\bigr\} \times\Sp(W^{ \perp_q } ).
\end{align*}
The converse conclusion is elementary to check.
\end{proof}

Introducing coordinates we obtain the following normal form of these real subspaces 
$W$ of $(\HH^n,q)$, up to the action of $\Sp(n)$.

\begin{corollary}\label{SPn1cplxnormalform}
Let $W$ be a real subspace of $\HH^n$, that has no quaternionic subspace but is invariant under $F_\lambda$ for some $\lambda\in\Sp(1)\cap\im(\HH)$. If some compact subgroup of $\mathcal{N}_{\Sp(n)}(W)$ acts irreducibly on $W$, then $W$ lies in the same $\Sp(n)$-orbit as one of the following
\begin{align*}
W_{F_\lambda,l}&:=F_\lambda^l\times\{0\}^{n-l},&0\leq l\leq n\\
W_{F_\lambda,l,t}&:=\left\{\left(\begin{smallmatrix}z+\mu w\\t(z-\mu w)\\0
\end{smallmatrix}\right):z,w\in F_\lambda^{l}\right\}, &0\leq l\leq \lfloor\tfrac{n}{2}\rfloor 
\end{align*}
for some $t\in (0,1)$.
The corresponding normalizers are given by
\begin{align*}
\mathcal{N}_{\Sp(n)\times\Sp(1)}(W_{F_\lambda,l})&:= 
\bigl(\U_{h_\lambda}(l)\times\Sp(n-l)\bigr)\times (\Sp(1)\cap F_\lambda),\\
\mathcal{N}_{\Sp(n)\times\Sp(1)}(W_{F_\lambda,l,t})&:=
\left\{\left(\left(\begin{smallmatrix}A&&\\&\psi(A)&\\&&B\end{smallmatrix}
\right),a\right)\in \Sp(n)\times\Sp(1):\ A\in\Sp(l), B\in\Sp(n-2l), a\in
F_\lambda\right\}\\
&\cong (\Sp(l)\times\Sp(n-2l))\times S^1.
\end{align*}
In both cases $\mathcal{N}_{\Sp(n)}(W)$ acts transitively on the connected components of the spheres in $W$.
\end{corollary}

\begin{proof}
If $W$ is $F_\lambda$-invariant and Lagrangian in $V=W\oplus W\mu$ with respect 
to $\eta_\lambda$, then $W$ is orthogonal to $W\mu$ with respect to $h_\lambda$. 
Therefore any $h_\lambda$-orthonormal basis over $F_\lambda$ of $W$ is a 
$q$-orthonormal basis over $\HH$ of $V$. In particular, if we choose an 
$h_\lambda$-orthonormal basis $(w_1,\ldots,w_l)$ of $W$ and complete
it to a $q$-orthonormal basis $(w_1,\ldots,w_{n})$ of $\HH^{n}$, we obtain an 
element $(w_1\cdots w_{n})\in\Sp(n)$ that maps $W_{F_\lambda,l}$ to $W$.

If $W$ is symplectic with respect to $\eta_\lambda$, then we saw in the proof 
of Proposition~\ref{Prop:SPn1notredmain} that there exists a 
$t\in\RR^{>0}\backslash\{1\}$ such that
$\sigma_{\HH,\mu}\bigl(q(\sigma_\mu(v),\sigma_\mu(u))\bigr)=t^2q(v,u)$ 
for all $v,u\in V_{-\xi}$. If we choose a $q$-orthonormal basis 
$(v_1,\ldots,v_{l/2})$ over $\HH$ of $V_{\xi}$ and set 
$v_{l/2+j}:=\tfrac{1}{t}\sigma_\mu(v_j)$ for all $1\leq j\leq l/2$ we obtain the 
$q$-orthonormal basis $(v_{1},\ldots,v_l)$ of $V=V_{\xi}\oplus V_{-\xi}$. 
Completing it to a $q$-orthonormal basis $(v_1,\ldots,v_{n})$ of $\HH^{n}$ the 
element $(v_1\cdots v_{n})\in\Sp(n)$ maps 
$W_{F_\lambda,l,t}$ to $W$. Note that $t$ may be chosen between $0$ and $1$, because 
the element
$\left(\begin{smallmatrix}0&\id&0\\\id&0&0\\0&0&\id\end{smallmatrix}\right)\in\Sp(n)$
maps $W_{F_\lambda,l,t}$ to $W_{F_\lambda,l,1/t}$.
\end{proof}

\subsection{The totally real case}

Let us now consider the case that $W$ is a totally real subspace of $\HH^n$, 
i.e., that there exists no $\lambda\in\Sp(1)\backslash\{\pm1\}$ such that 
$W=W\lambda$.

Again we want to determine those real subspaces $W\subset(\HH^{n},q)$ such that 
$\mathcal{N}_{\Sp(n)\times\Sp(1)}(W)$ acts transitively on the spheres in $W$.
Recall that $\mathcal{N}_{\Sp(n)\times\Sp(1)}(W)=\mathcal{N}_{\Sp(n)}(W)\times 
L$.

Theorem~\ref{Thm:MontgomerySamelson} implies that if $\dim_\RR(W)>4$ then
already $\mathcal{N}_{\Sp(n)}(W)$ has to act transitively on the spheres in $W$.
We therefore consider first totally real subspaces $W$ of arbitrary dimension
which satisfy the condition that $\mathcal{N}_{\Sp(n)}(W)$ acts irreducibly on
$W$. The only spherical subalgebras that we are possibly missing are those for
wich $\dim_\RR(W)\leq4$. We will consider these cases at the end of this
subsection.

We first give a normal form of totally real $W$ up to the action of $\GL(
\mbb{H}^n)$.

\begin{lemma}\label{lactstrivially}
Let $W$ be a real subspace of $\HH^{n}$ such that $\langle W\rangle_\HH=W\oplus 
Wi\oplus Wj\oplus Wk$. Then $W$ lies in the same $\GL(\HH^{n})$-orbit as 
$W_{\RR,l}:=\RR^l\times\{0\}^{n-l}$, where $l=\dim_\RR(W)$ and $\lie{l}$ acts 
trivially on $W$.
\end{lemma}

\begin{proof}
Let $(v_1,\ldots,v_l)$ be an $s$-orthonormal real basis of $W$. Then
$\langle W\rangle_\HH=\langle v_1,\ldots,v_l\rangle_\HH$. In particular
$(v_1,\ldots,v_l)$ is a quaternionic basis of $\langle W\rangle_\HH$
that we complete to a quaternionic basis $(v_1,\ldots,v_{n})$ of 
$\HH^{n}$. Now $g=(v_1\cdots v_{n})$ is an element in 
$\GL(\HH^{n})$ that maps $W_{\RR,l}$ to $W$.
This implies that
\begin{align*}
\mathcal{N}_{\lisp(n)\oplus\lisp(1)}(W)&\subset\Ad(g)(\mathcal{N}_{\lisp(n)\oplus\lisp(1)}(W_{\RR,l}))\\ 
\mathcal{N}_{\lisp(n)\oplus\lisp(1)}(W_{\RR,l})&=\left\{\left(A,\xi\right)\in\lisp(n)\oplus\lisp(1):Aw-w\xi\in W_0\ 
\forall\ w\in W_0\right\}\\
&=\left\{\left(A,\xi\right)\in\lisp(n)\oplus\lisp(1):A=\left(\begin{smallmatrix}A_1&0\\0& *
\end{smallmatrix}\right), A_1-\xi\id\in\RR^{l\times l}\right\}\\
&=\underbrace{\left\{\left(A,0\right)\in\lisp(n)\oplus\lisp(1):A=\left(\begin{smallmatrix}A_1&0\\
0& *\end{smallmatrix}\right), 
A_1\in\RR^{l\times l}\right\}}_{\mathcal{N}_{\lisp(n)}(W_{\RR,l})}\\
&\quad\oplus\underbrace{\left\{\left(A,\xi\right)\in\lisp(n)\oplus\lisp(1): A=\left(\begin{smallmatrix}\xi\id&0\\
0&0\end{smallmatrix}\right)\right\}}_{\tilde{\lie{l}}}.
\end{align*}
Therefore, the action of $\tilde{\lie{l}}$ on $W_{\RR,l}$ and thus on $W$ is trivial.
\end{proof}

In order to give a normal form of $W$ up to the action of $\Sp(n)$, we will
analyze the position of $W$ in $\mbb{H}^n$ with respect to the various
symplectic forms listed in Lemma~\ref{quaternionictohermitesch} which are
invariant under $\Sp(n)$. We will see that under the condition that
$\mathcal{N}_{\Sp(n)}(W)$ acts irreducibly on $W$, these symplectic forms
contain information about the division algebra
$\End_{\mathcal{N}_{\Sp(n)}(W)}(W)$. Hence, we will now divide such totally
real $W$ into three subcases, corresponding to the three possibilities
$\mbb{R}$, $\mbb{C}$ or $\mbb{H}$ for $\End_{\mathcal{N}_{\Sp(n)}(W)}(W)$.

We define the $\RR$-linear maps $\I_i,\I_j,\I_k:W\to W$ by the identity
\begin{align}\label{defIl}
s(\I_m w_1,w_2)=\omega_m(w_1,w_2)\quad\forall\ w_1,w_2\in W,\ \forall\ m\in\{i,j,k\}.
\end{align}
Let us consider the map 
\begin{align*}
\chi:\HH&\to\End_{\mathcal{N}_{\Sp(n)}(W)}(W)\\
a+bi+cj+dk&\mapsto a\id+b\I_i+c\I_j+d\I_k.
\end{align*}
Note that the kernel of $\chi$ lies in $\im(\HH)$, i.e., that $\chi|_\RR$ is injective and that
\begin{align*}
s(\chi(z)w_1,w_2)
&=s(w_1z,w_2)
\end{align*}
holds for all $z\in\HH$ and $w_1,w_2\in W$. In particular, if $\lambda\in\im(\HH)\cap\Sp(1)$, then
$s(\chi(\lambda)w_1,w_2)=s(w_1\lambda,w_2)=\omega_\lambda(w_1,w_2)$ (see
Lemma~\ref{quaternionictohermitesch}), i.e., $\chi(\lambda)=\I_\lambda$, where
$\I_\lambda$ is defined by the equation
\begin{equation}\label{Ilambda}
s(\I_\lambda w_1,w_2)=\omega_\lambda(w_1,w_2)
\end{equation}
for all $w_1,w_2\in W$.
The $\mathcal{N}_{\Sp(n)}(W)$-equivariant $\RR$-linear endomorphisms
of $W$ form a division algebra over $\RR$ that is isomorphic to $\RR$, $\CC$ or
$\HH$. We are going to successively consider the cases that $\dim_\RR(\chi(\HH))$ equals $1$ 
(see Lemma~\ref{Lem:spn1notredrealfirstcase}), $2$ (see Lemma
\ref{Lem:spn1notredrealmiddletwocases}) or is bigger than or equal to $3$ (see
Lemma~\ref{Lem:spn1notredreallastcase}). We will see in Remark \ref{BtDcases}
that these cases do indeed correspond to the cases that
$\End_{\mathcal{N}_{\Sp(n)}(W)}(W)$ is isomorphic to $\RR$, $\CC$ or $\HH$
respectively.

Let us start with the case that $\dim_\RR(\chi(\HH))=1$. Since we already noted 
that the kernel of $\chi$ lies in $\im(\HH)$, we obtain that $\omega_i|_{W\times 
W}=\omega_j|_{W\times W}=\omega_k|_{W\times W}=0$ in this case.

\begin{lemma}\label{Lem:spn1notredrealfirstcase}
Let $W$ be a totally real subspace of $\HH^{n}$ of real dimension $l$ with 
quaternionification $U:=W\oplus Wi\oplus Wj\oplus Wk$. If 
$W\subset U$ is isotropic with respect to $\omega_i$, $\omega_j$ and $\omega_k$ and 
$\mathcal{N}_{\Sp(n)}(W)$ acts irreducibly on $W$, then $U$ is a quaternionic 
irreducible $\mathcal{N}_{\Sp(n)}(W)$-representation, $\mathcal{N}_{\Sp(n)}
(W)=\OO(W)\times\Sp(W^{\perp_q})$ acts transitively on the spheres in $W$, 
$\lie{l}\cong\lisp(1)$ acts trivially on $W$, and 
$W$ lies in the same $\Sp(n)$-orbit as $W_{\RR,l}:=\RR^l\times\{0\}^{n-l}$.
\end{lemma}

\begin{proof}
Since $W\subset U$ is isotropic with respect to $\omega_i$ the spaces $W$ 
and $W\cdot i$ are $s$-orthogonal. Any 
element in $\mathcal{N}_{\Sp(n)}(W)$ respects $W$, $W^{\perp_q}$ and leaves 
$s|_{W\times W}$ invariant and lies therefore in 
$\OO(W)\times\Sp(W^{\perp_q})$. Conversely, any element in $\OO(W)$ acts 
$\HH$-linearly on $U$. Again, due to the fact that $W$ is isotropic with respect to
$\omega_i$, $\omega_j$ and $\omega_k$, the quaternionic inner product 
$q|_{U\times U}$ is $\OO(W)$-invariant, which implies 
$\OO(W)\times\Sp(W^{\perp_q})\subset\mathcal{N}_{\Sp(n)}(W)$. This shows
$\mathcal{N}_{\Sp(n)}(W)=\OO(W)\times\Sp(W^{\perp_q})$. Consequently,
Remark \ref{U(W)irredonV} implies that $\mathcal{N}_{\Sp(n)}(W)$ acts transitively on 
the spheres in $W$ and quaternionic irreducibly on $U$.

If we choose an $s$-orthonormal basis $(w_1,\ldots,w_l)$ over $\RR$ of $W$ then 
$(w_1,\ldots,w_l)$ is a $q$-orthonormal basis over $\HH$ of $U$. Completing it to a 
$q$-orthonormal basis $(w_1,\ldots,w_{n})$ of $\HH^{n}$ we obtain the element 
$(w_1\cdots w_{n})\in\Sp(n)$ that maps $W_{\RR,l}$ to $W$. A direct calculation 
yields
\begin{align*}
\mathcal{N}_{\lisp(n)\oplus\lisp(1)}(W_{\RR,l})&=\left\{\left(A,\xi\right)\in\lisp(n)\oplus\lisp(1):
Aw-w\xi\in W_{\RR,l}\ \forall\ w\in W_{\RR,l}\right\}\\
&=\left\{\left(A,\xi\right)\in\lisp(n)\oplus\lisp(1):A=\left(\begin{smallmatrix}A_1&0\\0&*
\end{smallmatrix}\right),A_1-\xi\id\in\RR^{l\times l}\right\}\\
&=\underbrace{\left\{\left(A,0\right)\in\lisp(n)\oplus\lisp(1):A=\left(\begin{smallmatrix}A_1&0\\0
&*\end{smallmatrix}\right),A_1\in\RR^{l\times l}\right\}}_{=\so(l)\oplus
\lisp(n-l)}\\
&\quad\oplus\underbrace{\left\{\left(A,\xi\right)\in\lisp(n)\oplus\lisp(1):A=\left(
\begin{smallmatrix}\xi\id&0\\0&*\end{smallmatrix}\right)\right\}}_{\cong\lisp(1)},
\end{align*}
i.e., $\lie{l}\cong\lisp(1)$ in this case and $\lie{l}$ acts 
trivially on $W_{\RR,l}$ (see Lemma \ref{lactstrivially}).
\end{proof}

Let us now assume that $\dim_\RR(\chi(\HH))=2$, i.e., the kernel of $\chi$ is a 
$2$-dimensional subspace of $\im(\HH)$. Its orthogonal complement in $\HH$ 
is also $2$-dimensional and contains $\RR$. Let us assume that it is equal to 
$F_\lambda=\RR\oplus\RR\lambda$ for some $\lambda\in\im(\HH)\cap\Sp(1)$, 
i.e., $\chi|_{F_\lambda}:F_\lambda\to\RR\id\oplus\RR\I_\lambda$ is an 
isomorphism. We choose the $\re q_\HH$-orthonormal basis 
$(1,\lambda,\mu,\lambda\mu)$ of $\HH$. Then 
the kernel of $\chi$ is equal 
to $\RR\mu\oplus\RR\lambda\mu=F_\lambda\mu$.
We set $U=W\oplus W\lambda\oplus W\mu\oplus W\lambda\mu$ and
denote by $\sigma_\lambda:U\to U$  the $F_\lambda$-anti-linear and $F_\mu$-linear involution
that fixes $W$ pointwise. Analogously we let $\sigma_\mu:U\to U$ denote the $F_\mu$-anti-linear 
and $F_\lambda$-linear involution that fixes $W$ pointwise. The two
involutions $\sigma_\lambda$ and $\sigma_\mu$ are 
$\mathcal{N}_{\Sp(n)}(W)$-equivariant and commute.
Furthermore $W$ is the joint set of fixed points 
of $\sigma_\lambda$ and $\sigma_\mu$ in $U$, i.e., 
$W=(U^{\sigma_\lambda})^{\sigma_\mu}
=(U^{\sigma_\mu})^{\sigma_\lambda}$.
The map $f_{\sigma_\mu}:U\to U^{\sigma_\mu}$ defined by $u\mapsto\tfrac{1}{2}(u+\sigma_\mu(u))$ is 
$F_\lambda$-linear, $\mathcal{N}_{\Sp(n)}(W)$-equivariant, surjective 
and has kernel $W\mu\oplus W\lambda\mu$.

\begin{lemma}\label{Lem:spn1notredrealmiddletwocases}
Let $W$ be a totally real subspace of $\HH^{n}$ of real dimension $l$ with 
quaternionification $U:=W\oplus W\lambda\oplus W\mu\oplus W\lambda\mu$ 
such that $\mathcal{N}_{\Sp(n)}(W)$ acts irreducibly on $W$. We assume 
furthermore that $\I_\lambda\neq 0$ on $W$ and 
$\omega_\mu|_{W\times W}=\omega_{\mu\lambda}|_{W\times W}=0$, where
$\I_\lambda$ is defined by~\eqref{Ilambda}. Then
$U$ decomposes $q$-orthogonally as
$U=U_{\h,\xi}\oplus U_{\h,-\xi}$, where $U_{\h,\xi}$ and 
$U_{\h,-\xi}$ are quaternionic irreducible 
$\mathcal{N}_{\Sp(n)}(W)$-representations while $f_{\sigma_\mu}(U_{\h,\xi})$ and 
$f_{\sigma_\mu}(U_{\h,-\xi})$ are $F_\lambda$-irreducible 
$\mathcal{N}_{\Sp(n)}(W)$-representations that are as real 
representations isomorphic to $W$. Furthermore 
\begin{align*}
\mathcal{N}_{\Sp(n)}(W)&=\{(g,\varphi(g))\in\U
(f_{\sigma_\mu}(U_{\h,\xi}),h_\lambda)\times\U
(f_{\sigma_\mu}(U_{\h,-\xi}),h_\lambda)\}\times\Sp(W^{\perp_q}),\\
\lie{l}&\cong\lie{u}(1),
\end{align*}
where $\varphi:\U(f_{\sigma_\mu}(U_{\h,\xi}),h_\lambda)\to\U(f_{\sigma_\mu}(U_{\h,-\xi}),h_\lambda)$ 
defined by $\varphi(g)(v)=\sigma_\lambda(g\cdot\sigma_\lambda(v))$ for all 
$v\in f_{\sigma_\mu}(U_{\h,-\xi})$ is a Lie group isomorphism 
and $W$ lies in the same $\Sp(n)$-orbit as 
$W_{\RR,l,r,F_\lambda}:=\left\{\left(\begin{smallmatrix}x+\lambda y\\r(x-\lambda y)\\0
\end{smallmatrix}\right):x,y\in \RR^{l/2}\right\}$ for some $r\in (0,1)$.
Furthermore $\mathcal{N}_{\Sp(n)}(W)$ acts transitively on the spheres in $W$
while $\lie{l}$ acts trivially on $W$.
\end{lemma}
\begin{proof}
We denote by $V_\lambda$ the complexification $W\oplus W\lambda$ of $W$. Note 
that $V_\lambda=U^{\sigma_\mu}$. The assumption $\omega_\mu|_{W\times 
W}=\omega_{\mu\lambda}|_{W\times W}=0$ implies $\omega_\mu|_{V_\lambda\times 
V_\lambda}=\omega_{\mu\lambda}|_{V_\lambda\times V_\lambda}=0$. This gives in 
turn that $V_\lambda$ is $h_\lambda$-orthogonal to $V_\lambda\cdot\mu$. 
Therefore any $h_\lambda$-orthonormal basis over $F_\lambda$ of $V_\lambda$ is a 
$q$-ortho\-normal basis over $\HH$ of $U$. In particular, if we choose an 
$h_\lambda$-orthonormal basis of $V_\lambda$ and complete it to a 
$q$-orthonormal basis of $\HH^{n}$, we obtain an element in $\Sp(n)$ that maps 
$V_{F_\lambda,l}:=F_\lambda^l\times\{0\}^{n-l}$ to $V_\lambda$. This implies that
$\mathcal{N}_{\Sp(n)}(V_\lambda)=\U(V_\lambda,h_\lambda)\times\Sp(U^{\perp_q})$
acts $F_\lambda$-irreducibly on $V_\lambda$ and quaternionic irreducibly on $U$
(see Remark \ref{U(W)irredonV}). Since $\Sp(n)$ acts $\HH$-linear on 
$\HH^{n}$ any element in $\mathcal{N}_{\Sp(n)}(W)$ leaves $V_\lambda$
invariant. As a result we obtain
\begin{align*}
\mathcal{N}_{\Sp(n)}(W)&\subset\mathcal{N}_{\Sp(n)}(V_\lambda)
=\U(V_\lambda,h_\lambda)\times\Sp(U^{\perp_q})
\end{align*}
and the action of $\mathcal{N}_{\Sp(n)}(W)$ on $W$ is orbit equivalent to
the action of the first factor, i.e., the action of 
$\mathcal{N}_{\U(V_\lambda,h_\lambda)}(W)$. Therefore 
$\mathcal{N}_{\U(V_\lambda,h_\lambda)}(W)$ acts irreducibly on $W$.

Since $\I_\lambda$ is defined as in Section \ref{Section:sun} we obtain that the 
kernel of $\I_\lambda:W\to W$ is equal to the maximal subspace of $W$ on which 
$\omega_\lambda$ is degenerate. Since $W$ is a maximal totally real subspace of 
the Hermitian vector space $(V_\lambda,h_\lambda)$ and $\Sp(n)$ is a subgroup 
of $\U(F_\lambda^{2n},h_\lambda)$, the requirements of 
Proposition~\ref{Prop:Un1notredmain} are met and we conclude (since
$\I_\lambda\neq 0$) 
that $W\subset V_\lambda$ is symplectic with respect to $\omega_\lambda$, 
$V_\lambda$ decomposes $h_\lambda$-orthogonally as 
$V_\lambda=V_{\lambda,\xi}\oplus V_{\lambda,-\xi}$ into two 
$F_\lambda$-irreducible 
$\mathcal{N}_{\U(F_\lambda^{2n},h_\lambda)}(W)$-representations that are as real 
representations isomorphic to $W$ and which are eigenspaces of the 
$F_\lambda$-linear continuation of $\I_\lambda:W\to W$ to $V_\lambda$ for the
eigenvalues $\pm\xi\in\lambda\RR\backslash\{0\}$. The involution
$\sigma_\lambda$ induces an $\mathcal{N}_{\Sp(n)}(W)$-equivariant and 
$F_\lambda$-anti-linear isomorphism between $V_{\lambda,\xi}$ and 
$V_{\lambda,-\xi}$ and
\begin{align*}
\mathcal{N}_{\U(V_\lambda,h_\lambda)}(W)
=\{(g,\varphi(g))\in\U(V_{\lambda,\xi})\times\U(V_{\lambda,-\xi})\},
\end{align*}
where the Lie group isomorphism $\varphi:\U(V_{\lambda,\xi})\to\U(V_{\lambda,-\xi})$ is given by
$\varphi(g)(v)=\sigma_\lambda(g\sigma_\lambda(v))$ for all $g\in\U(V_{\lambda,\xi})$ and $v\in V_{\lambda,-\xi}$.

We already stated that the action of 
$\mathcal{N}_{\Sp(n)}(W)$ on $W$ and $V_\lambda$ is orbit equivalent
to the action of $\mathcal{N}_{\U(V_\lambda,h_\lambda)}(W)$ on $W$ and $V_\lambda$.
As a result
the subspaces $V_{\lambda,\pm\xi}$ are two $F_\lambda$-irreducible 
$\mathcal{N}_{\Sp(n)}(W)$-representations that
are as real representations isomorphic to $W$ and 
\begin{align*}
\mathcal{N}_{\Sp(n)}(W)
=\{(g,\varphi(g))\in\U(V_{\lambda,\xi},h_\lambda)\times
\U(V_{\lambda,-\xi},h_\lambda)\}\times\Sp(U^{\perp_q}).
\end{align*}

Note that due to Remark~\ref{U(W)irredonV} the induced representations of 
$\mathcal{N}_{\Sp(n)}(W)$ on 
$U_{\I_\lambda,\pm\xi}:=\langle V_{\lambda,\pm \xi}\rangle_\HH$
are as quaternionic representations irreducible.
We conclude that $\mathcal{N}_{\Sp(n)}(W)$
acts transitively on the spheres in $W$, $F_\lambda$-irreducible on 
$V_{\lambda,\pm\xi}$ and
quaternionic irreducibly on $U_{\h,\pm\xi}$.

If we map $V_\lambda$ to $V_{F_\lambda,l}$ by an element in
$\Sp(n)$ as described above, Corollary \ref{Cor:unitnormform} yields
that $W$ lies in the same $\U(V_\lambda,h_\lambda)$-orbit as 
$W_{\RR,l,r,F_\lambda}$ for some $r\in (0,1)$. A direct 
calculation shows $\lie{l}\cong\lie{u}(1)$. The action of $\lie{l}$ on $W$ 
is trivial (see Lemma \ref{lactstrivially}). 
\end{proof}

Let us now assume $\dim_\RR(\chi(\HH))\geq 3$. In particular
$\End_{\mathcal{N}_{\Sp(n)}(W)}(W)\cong\HH$. In order to simplify
this case we are not only going to assume that $\mathcal{N}_{\Sp(n)}(W)$
acts irreducibly on $W$ but we are going to use the stronger condition that
$\mathcal{N}_{\Sp(n)}(W)$ acts transitively on the spheres in $W$. 
As we noted at the begin of this section this is always the case if 
$\mathcal{N}_M(W)$ acts transitively on the spheres 
in $W$ and $\dim_\RR(W)> 4$. The case $\dim_\RR(W)\leq 4$ will be 
considered later.

\begin{lemma}\label{Lem:spn1notredreallastcase}
Let $W$ be a totally real subspace of $\HH^{n}$ of real dimension $l$ with 
quaternionification $U:=W\oplus W\lambda\oplus W\mu\oplus W\lambda\mu$ 
such that $\mathcal{N}_{\Sp(n)}(W)$ acts transitively on the spheres in $W$. 
We assume 
$\End_{\mathcal{N}_{\Sp(n)}(W)}(W)\cong\HH$.
Then $U=\langle W\rangle_\HH$ decomposes into four isomorphic
quaternionic irreducible 
$\mathcal{N}_{\Sp(n)}(W)$-representations
that are as real $\mathcal{N}_{\Sp(n)}(W)$-representations 
isomorphic to $W$, $\mathcal{N}_{\Sp(n)}(W)$ is isomorphic to
$\Sp(W)\times\Sp(W^{\perp_q})$ and its action on $W$ is given by the
standard action of $\Sp(W)$ on $W$.
In this case $W$ lies in the same $\Sp(n)$-orbit as
\begin{align*}
W_{\RR,l,z}:=\left\{\left(\begin{smallmatrix}u\\uz_2\\uz_3\\uz_4\\0\end{smallmatrix}\right):u\in \HH^{l/4}\right\}
\end{align*}
for some $z\in\mathcal{P}$, where
\begin{align*}
\mathcal{P}&=\left\{(z_2,z_3,z_4)^t\in(\HH^*)^3:\langle 1,z_2,z_3,z_4\rangle_\RR=\HH,\right.\\
&\left.\quad\quad \dim_\RR(\im(\langle \xi+\ol{z_2}\xi z_2+\ol{z_3}\xi z_3+\ol{z_4}\xi z_4:\xi\in\HH\rangle_\RR))\geq 2\right\}. 
\end{align*}
Furthermore $\mathcal{N}_{\Sp(n)\times\Sp(1)}(W_{\RR,l,z})=\mathcal{N}_{\Sp(n)}(W_{\RR,l,z})$ for all $z\in\mathcal{P}$.
\end{lemma}
\begin{proof}
Since $\End_{\mathcal{N}_{\Sp(n)}(W)}(W)$ is assumed to be isomorphic 
to $\HH$, Theorem \cite[Theorem II.6.7]{BtD} implies that 
$W\in\Irr(\mathcal{N}_{\Sp(n)}(W),\RR)_\HH$. This in turn implies
that the complexification $V:=\CC\otimes_\RR W$ decomposes as
$\CC\otimes_\RR W=V_1\oplus V_2$ into
two complex irreducible $\mathcal{N}_{\Sp(n)}(W)$-representations
$V_1,V_2$, which are as complex $\mathcal{N}_{\Sp(n)}(W)$-representations
isomorphic and as real $\mathcal{N}_{\Sp(n)}(W)$-representations 
isomorphic to $W$ (see Proposition \cite[Proposition II.6.6 (iii)]{BtD}).
Moreover, for $m\in\{1,2\}$, the quaternionification $\HH\otimes_\CC V_m$ of 
$V_m$
decomposes as $\HH\otimes_\CC V_m=U_{m,1}\oplus U_{m,2}$
into two quaternionic irreducible 
$\mathcal{N}_{\Sp(n)}(W)$-representations, which are as quaternionic
$\mathcal{N}_{\Sp(n)}(W)$-representations
isomorphic and as real $\mathcal{N}_{\Sp(n)}(W)$-representations 
isomorphic to $W$ (see Proposition \cite[Proposition II.6.6 (ix)]{BtD}).
Overall this shows that the quaternionification 
$U:=\HH\otimes_\RR W=\HH\otimes_\CC V$
decomposes as $U_{1,1}\oplus U_{1,2}\oplus U_{2,1}\oplus U_{2,2}$ into
four quaternionic irreducible 
$\mathcal{N}_{\Sp(n)}(W)$-representations, which are as quaternionic
$\mathcal{N}_{\Sp(n)}(W)$-representations
isomorphic and as real $\mathcal{N}_{\Sp(n)}(W)$-representations 
isomorphic to $W$.

Let $g:W\to U_{1,1}$ be an $\mathcal{N}_{\Sp(n)}(W)$-equivariant $\RR$-linear 
isomorphism. Since we assumed that $\mathcal{N}_{\Sp(n)}(W)$ acts transitively 
on the spheres in $W$, it also acts transitively on the spheres in $U_{1,1}$. 
Furthermore $\mathcal{N}_{\Sp(n)}(W)$ respect the restriction $q|_{U_{1,1}\times U_{1,1}}$
and therefore acts as a subgroup of $\Sp(U_{1,1})$ on $U_{1,1}$.
Due to Onishchik's classification (see Theorem \ref{Thm2On})
there exists no proper subgroup of $\Sp(U_{1,1})$ that acts transitively on the spheres
in $U_{1,1}$. Therefore the action of $\mathcal{N}_{\Sp(n)}(W)$ on $U_{1,1}$ is the action of 
$\Sp(U_{1,1})$ on $U_{1,1}$. If we pull back the quaternionic structure on $U_{1,1}$ via $g:W\to U_{1,1}$ 
to $W$ we obtain a quaternionic structure on $W$ that is respected by $\mathcal{N}_{\Sp(n)}(W)$.
Moreover the action of $\mathcal{N}_{\Sp(n)}(W)$ on $W$ is the action of 
$\Sp(W)$ on $W$. Since $\mathcal{N}_{\Sp(n)}(W)$ leaves $W^{\perp_q}$ invariant we conclude
$\mathcal{N}_{\Sp(n)}(W)=\Sp(W)\times \Sp(W^{\perp_q})$.

Since the four quaternionic irreducible 
$\mathcal{N}_{\Sp(n)}(W)$-representations $U_{1,1},U_{1,2},U_{2,1},U_{2,2}$ are 
isomorphic, we may assume them to be $q$-orthogonal and thus $W$ lies in the 
same $\Sp(n)$-orbit as $W_\psi=\left\{\left(\begin{smallmatrix} 
u\\\psi_2(u)\\\psi_3(u)\\\psi_4(u)\\0\end{smallmatrix}\right):u\in\HH^{l/4} 
\right\}$, for some 
$\psi_2,\psi_3,\psi_4\in\End_{\Sp(l/4)}(\HH^{l/4})\backslash\{0\}\cong\HH^*$. A 
direct calculation then shows that the isomorphisms have to fulfill the 
constraints given by the set $\mathcal{P}$ in order for $W_\psi$ to have the 
assumed properties. For the technical details of the proof of the normal form we 
refer the reader to \cite[Proposition 7.42]{K}.
\end{proof}

\begin{rem}\label{BtDcases}
Proposition 6.6 and Theorem 6.7 in \cite{BtD} imply that
Lemmas~\ref{Lem:spn1notredrealfirstcase},
\ref{Lem:spn1notredrealmiddletwocases} and
\ref{Lem:spn1notredreallastcase} 
cover the cases that the division algebra
$\End_{\mathcal{N}_{\Sp(n)}(W)}(W)$ is isomorphic 
to $\RR$, $\CC$ or $\HH$ respectively.
\end{rem}

The following well-known Lemma \ref{Lem:automorphismofH} implies that 
the element $\lambda\in\im(\HH)\cap\Sp(1)$ in Proposition
\ref{Prop:SPn1notredmain}, 
Lemmas \ref{SPn1cplxnormalform},  \ref{Lem:spn1notredrealmiddletwocases} 
and \ref{Lem:spn1notredreallastcase} may be chosen to be $i$
(while $\mu$ may be chosen as $j$).

For the proof of the following result we refer the reader
to~\cite[Lemma~7.45]{K}.

\begin{lemma}\label{Lem:automorphismofH}
Any ring automorphism $\psi$ of $\HH$ is inner, i.e., for any $\psi$ there exists a 
$q_\psi\in\HH\backslash\{0\}$ such that $\psi(z)=q_\psi zq_\psi^{-1}$. Furthermore 
$q_\psi$ can be chosen in $\Sp(1)$.

Furthermore the action of any $p\in\Sp(1)$ on $\im(\HH)\cong
\RR^3$ given by $pvp^{-1}$ is a special orthogonal transformation, i.e., the map 
$\Sp(1)\to\SO(3)$, $p\mapsto (v\mapsto pvp^{-1})$ for all $v\in\im(\HH)$ is a group 
homomorphism with kernel $\{\pm 1\}$. 
\end{lemma}
At this point we have found all quaternionic and complex subspaces $W$
such that the normalizer $\mathcal{N}_{\Sp(n)\times\Sp(1)}(W)$ acts irreducibly on $W$ and all
real subspaces $W$ of $\HH^{n}$ such that 
$\mathcal{N}_{\Sp(n)}(W)$ acts transitively on the spheres in $W$. Furthermore we
have shown that for all such $W$ the normalizer 
$\mathcal{N}_{\Sp(n)}(W)$ (and therefore also 
$\mathcal{N}_{\Sp(n)\times\Sp(1)}(W)$) acts transitively on the connected components 
of the spheres in $W$.

The goal of this section is to find all real subspaces $W$ of $\HH^{n}$
such that $\mathcal{N}_{\Sp(n)\times\Sp(1)}(W)$ acts transitively on the connected 
components of the spheres in $W$. If $\mathcal{N}_{\Sp(n)}(W)$
does not act irreducibly on $W$ (in particular if $\dim_\RR(W)>1$)
then $L$ has to act transitively on the spheres in $W$ forcing 
$\dim_\RR(W)\leq 4$. 
This is the reason why we will consider the cases $2\leq \dim_\RR(W)\leq 4$
from now on.
Even though we could assume that $L$ acts transitively on the spheres 
in $W$ we rather just assume $\dim_\RR(W)\leq 4$ and see what this 
implies for the $L$-action on $W$. Note that we still assume that 
$W\subset\HH^{n}$ is a real subspace such that 
$W\neq W\lambda$ for all $\lambda\in\HH\backslash\RR$ and that
\begin{align*}
\lie{l}=\{(\phi(\alpha),\alpha)\in\lisp(n)\times\lisp(1):
\alpha\in\pi_2(\mathcal{N}_{\lisp(n)\oplus\lisp(1)}(W))\}
\end{align*}
for a Lie algebra homomorphism $\phi:\pi_2(\mathcal{N}_{\lisp(n)\oplus\lisp(1)}(W))\to\lisp(n)$,
which is injective by the assumption 
$\mathcal{N}_{\{0\}\oplus\lisp(1)}(W)=\{0\}$. If $\pi_2(\mathcal{N}_{\lisp(n)\oplus\lisp(1)}(W))=
\lisp(1)$ then the simply connectedness of $\Sp(1)$ implies that $\phi$ lifts to a Lie 
group homomorphism.

Note that $\dim_\RR(\langle W\rangle_\HH)\leq 4 \dim_\RR(W)$ and that 
$\dim_\RR(\langle W\rangle_\HH)$ is divisible by four. Therefore we have the following cases
\begin{center}
\begin{tabular}{cl}
$\dim_\RR(W)$&$\dim_\HH(\langle W\rangle_\HH)$\\
\hline
4&$1,\ 2^\bigstar,\ 3^\bigstar,\ 4^*$\\
3&$1,\ 2^\bigstar,\ 3^*$\\
2&$1,\ 2^*$,
\end{tabular}
\end{center}
where the case $\dim_\RR(W)=4$, $\dim_\HH(\langle W\rangle_\HH)=1$ and the case 
$\dim_\RR(W)=2$, $\dim_\HH(\langle W\rangle_\HH)=1$ are excluded by the assumption 
$W\neq W\lambda$ for all $\lambda\in\HH\backslash\RR$. 

The cases $^*$ are excluded by Lemma \ref{lactstrivially}, while the cases with
$^\bigstar$ are excluded by \cite[Lemma 7.48, Lemma 7.49]{K}).

Hence we only need to consider the case where $\dim_\RR(W)=3$ and 
$\dim_\HH(\langle W\rangle_\HH)=1$. 

\begin{lemma}\label{3dimrealinH}
Let $\dim_\RR(W)=3$ and $\dim_\HH(\langle W\rangle_\HH)=1$. Then $W$ lies in 
the same $\Sp(n)$-orbit as $W_{\RR,3,1}:=\im(\HH)\times\{0\}^{n-1}$ and the 
action of $L\cong\Sp(1)$ on $W$ is isomorphic to the defining representation of 
$\SO(3)$ on $\RR^3$ and 
$\mathcal{N}_{\lisp(n)}(W)=\{0\}\times\lisp(W^{\perp_q})$ acts trivially on 
$W$.
\end{lemma}
\begin{proof}
Since $L$ respects the real inner product $s$, the real $1$-dimensional subspace 
$W^{\perp_s}\cap \langle W\rangle_\HH$ is invariant under the action of $L$. Let us choose an 
element $v_1\in W^{\perp_s}\cap \langle W\rangle_\HH$ with $q(v_1,v_1)=1$. Then we can 
complete $v_1$ to a quaternionic basis of $\HH^{n}$ and obtain an element in 
$\Sp(n)$ that maps $W_{\RR,3,1}$ to $W$. Then
\begin{align*}
\mathcal{N}_{\lisp(n)\oplus\lisp(1)}(W_{\RR,3,1})
&=\left\{\left(A,\xi\right)\in\lisp(n)\oplus\lisp(1):A=\left(\begin{smallmatrix}
\xi&0&\ldots&0\\0&&&
\\\vdots&&*&\\0&&&\end{smallmatrix}\right)\right\}\\
\mathcal{N}_{\lisp(n)}(W_{\RR,3,1})
&=\left\{A\in\lisp(n):A=\left(\begin{smallmatrix}0&\ldots&0\\\vdots&*&\\0&&
\end{smallmatrix}\right)\right\}
\end{align*}
\begin{align*}
\lie{l}
&=\left\{\left(A,\xi\right)\in\lisp(n)\oplus\lisp(1):A=\left(\begin{smallmatrix}\xi&0&\ldots&0\\
0&&&\\\vdots&&0&\\0&&&\end{smallmatrix}\right)\right\}\cong\lisp(1).
\end{align*}
Now the proof follows from Lemma~\ref{Lem:automorphismofH}.
\end{proof}

We summarize the work of this section in the following theorem.

\begin{theorem}\label{spn1notredsummary}
Let $W$ be a real subspace of $\HH^{n}$. If $\mathcal{N}_{\Sp(n)\times\Sp(1)}(W)$ acts 
irreducibly on $W$ then $W$ lies in the same $\Sp(n)\times\Sp(1)$-orbit as one of the following
\begin{align*}
W_{\HH,l}&=\HH^l\times\{0\}^{n-l},&0\leq l\leq n\\
W_{\CC,l}&=\CC^l\times\{0\}^{n-l},&0< l\leq n\\
W_{\CC,2l,t}&=\left\{\left(\begin{smallmatrix}z+j w\\t(z-j w)\\0
\end{smallmatrix}\right):z,w\in \CC^l\right\},&0< l\leq \lfloor\tfrac{n}{2}\rfloor\\
W_{\RR,l}&=\RR^l\times\{0\}^{n-l},&0< l\leq n\\
W_{\RR,2l,t,\CC}&=\left\{\left(\begin{smallmatrix}z\\t\ol{z}\\0\end{smallmatrix}
\right):z\in \CC^l\right\},&0< l\leq \lfloor\tfrac{n}{2}\rfloor\\
W_{\RR,4l,z}&=\left\{\left(\begin{smallmatrix}v\\vz_2\\vz_3\\vz_4\\0\end{smallmatrix}
\right):v\in \HH^l\right\},&0< l\leq \lfloor\tfrac{n}{4}\rfloor\\
W_{\RR,3,1}&=\im(\HH)\times\{0\}^{n-1},&
\end{align*}
for some $t\in\RR^{>0}\backslash\{1\}$, $z\in\mathcal{P}$. Furthermore 
\begin{align*}
\mathcal{N}_{\lisp(n)\oplus\lisp(1)}(W_{\HH,l})&
=(\lisp(l)\oplus\lisp(n-l))\oplus\lisp(1),\\
\mathcal{N}_{\lisp(n)\oplus\lisp(1)}(W_{\CC,l})&
=(\liu(l)\oplus\lisp(n-l))\oplus \lie{u}(1),\\
\mathcal{N}_{\lisp(n)\oplus\lisp(1)}(W_{\CC,2l,t})&
\cong(\lisp(l)\oplus\lisp(n-2l))\oplus \lie{u}(1),\\
\mathcal{N}_{\lisp(n)\oplus\lisp(1)}(W_{\RR,l})&
\cong(\so(l)\oplus\lisp(n-l))\oplus\lisp(1),\\
\mathcal{N}_{\lisp(n)\oplus\lisp(1)}(W_{\RR,2l,t,\CC})&
\cong(\liu(l)\oplus\lisp(n-2l))\oplus\lie{u}(1),\\
\mathcal{N}_{\lisp(n)\oplus\lisp(1)}(W_{\RR,4l,z})&
\cong(\lisp(l)\oplus\lisp(n-4l))\oplus\{0\},\\
\mathcal{N}_{\lisp(n)\oplus\lisp(1)}(W_{\RR,3,1})&
\cong\lisp(n-1)\oplus\lisp(1).
\end{align*}
\end{theorem}
\begin{proof}
The proof for $W_{\HH,l}$ follows directly from Lemma~\ref{Lem:Wquaternionic}.
Using Lemma~\ref{Lem:automorphismofH}
the proof for $W_{\CC,l}$ and $W_{\CC,2l,t}$ follows from Corollary \ref{SPn1cplxnormalform}, while the proof for
$W_{\RR,l}$, $W_{\RR,2l,t,\CC}$ and $W_{\RR,4l,z}$ follow from Lemmas
\ref{Lem:spn1notredrealfirstcase},
\ref{Lem:spn1notredrealmiddletwocases} and
\ref{Lem:spn1notredreallastcase} respectively.
The proof for $W_{\RR,3,1}$ follows from Lemma \ref{3dimrealinH}.
\end{proof}

\subsection{Non-reductive spherical subalgebras}
In this subsection we describe all non-re\-duc\-tive spherical algebraic 
subalgebras of $\lisp(n,1)$, up to the action of $M$. Their unipotent radicals 
are of the form $W^\perp\oplus\g_{2\alpha}$, where $W\subset\g_\alpha$ is one of 
the subspaces described in Theorem \ref{spn1notredsummary}. In order to find all 
possibilities for their maximal compact subalgebras we apply Onishchik's Theorem 
\ref{Thm2On} to their normalizers and thus obtain (with Remark \ref{sp(1)+l2}) 
the following.

\begin{theorem}\label{NonredSympl}
Every spherical non-reductive algebraic subalgebra of $\g=\lisp(n,1)$,
$n\geq2$, is $G$-conjugate to one in the following list, where
$\lie{b}_j\subset\lisp(n-1-j)$, $\lie{c}\subset\lisp(1)$ and
$\lie{d}\subset\lie{u}(1)$ are arbitrary (under the condition displayed in
italic in Remark~\ref{non-reductive,embedding} adapted to this situation).
\setlength\arraycolsep{7pt}
\begingroup
\renewcommand*{\arraystretch}{1.2}
\begin{equation*}
\begin{array}{|c|c|}\hline
\lie{l}_H\oplus\lie{n} & \lie{l}_H\subset\lie{m}\oplus\lie{a}\text{ arbitrary}\\
\hline
\lisp(k)\oplus\lie{b}_l\oplus\lie{c}\oplus\lie{a}\oplus\lie{n}_{\HH,l} &1\leq l\leq n-1,\\
\lie{c}\oplus\lie{b}_1\oplus\lisp(1)\oplus\lie{a}\oplus\lie{n}_{\HH,1} &\\
\hline
\liu(l)\oplus\lie{b}_l\oplus\lie{d}\oplus\lie{a}\oplus\lie{n}_{\CC,l} & 1\leq l\leq n-1\\
\su(l)\oplus\lie{b}_l\oplus\lie{d}\oplus\lie{a}\oplus\lie{n}_{\CC,l} & 2\leq l\leq n-1\\
\lisp(m)\oplus\lie{u}(1)\oplus\lie{b}_l\oplus\lie{d}\oplus\lie{a}\oplus
\lie{n}_{\CC,l} 
& 1\leq l\leq n-1, l=2m, m\geq 2\\
\lisp(m)\oplus\lie{b}_l\oplus\lie{d}\oplus\lie{a}\oplus\lie{n}_{\CC,l} 
& 1\leq l\leq n-1, l=2m, m\geq2\\
\hline
\lisp(l)\oplus\lie{b}_{2l}\oplus\lie{d}\oplus\lie{a}\oplus\lie{n}_{\CC,2l,t}
&1\leq 2l\leq n-1, l=2m, t\in\RR^{>0}\backslash\{1\}\\
\hline
\so(l)\oplus\lie{b}_l\oplus\lie{c}\oplus\lie{a}\oplus\lie{n}_{\RR,l}&1\leq l\leq n-1\\
\liu(m)\oplus\lie{b}_l\oplus\lie{c}\oplus\lie{a}\oplus\lie{n}_{\RR,l}
&1\leq l\leq n-1, l=2m, m\geq 3\\
\su(m)\oplus\lie{b}_l\oplus\lie{c}\oplus\lie{a}\oplus\lie{n}_{\RR,l}
&1\leq l\leq n-1, l=2m, m\geq 3\\
\lisp(m)\oplus\lisp(1)\oplus\lie{b}_l\oplus\lie{c}\oplus\lie{a}\oplus\lie{n}_{\RR,l}
&1\leq l\leq n-1, l=4m, m\geq 2\\
\lisp(m)\oplus\lie{u}(1)\oplus\lie{b}_l\oplus\lie{c}\oplus\lie{a}\oplus
\lie{n}_{\RR,l}
&1\leq l\leq n-1, l=4m, m\geq 1\\
\lisp(m)\oplus\lie{b}_l\oplus\lie{c}\oplus\lie{a}\oplus\lie{n}_{\RR,l}
&1\leq l\leq n-1, l=4m, m\geq 1\\
\so(9)\oplus\lie{b}_{16}\oplus\lie{c}\oplus\lie{a}\oplus\lie{n}_{\RR,16}&\\
\so(7)\oplus\lie{b}_8\oplus\lie{c}\oplus\lie{a}\oplus\lie{n}_{\RR,8}&\\
\lie{g}_2\oplus\lie{b}_7\oplus\lie{c}\oplus\lie{a}\oplus\lie{n}_{\RR,7}&\\
\hline
\liu(l)\oplus\lie{b}_{2l}\oplus\lie{d}\oplus\lie{a}\oplus\lie{n}_{\RR,2l,t,\CC} 
& 2\leq 2l\leq n-1, t\in\RR^{>0}\backslash\{1\}\\
\su(l)\oplus\lie{b}_{2l}\oplus\lie{d}\oplus\lie{a}\oplus\lie{n}_{\RR,2l,t,\CC} 
& 4\leq 2l\leq n-1,  t\in\RR^{>0}\backslash\{1\}\\
\lisp(m)\oplus\lie{u}(1)\oplus\lie{b}_{2l}\oplus\lie{d}\oplus\lie{a}\oplus
\lie{n}_{\RR,2l,t,\CC} & 2\leq 2l\leq n-1, l=2m, m\geq2,  t\in\RR^{>0}\backslash\{1\}\\
\lisp(m)\oplus\lie{b}_{2l}\oplus\lie{d}\oplus\lie{a}\oplus\lie{n}_{\RR,2l,t,\CC} 
& 2\leq 2l\leq n-1, l=2m,m\geq 2,  t\in\RR^{>0}\backslash\{1\}\\
\hline
\lisp(l)\oplus\lie{b}_{4l}\oplus\lie{a}\oplus\lie{n}_{\RR,4l,z}
&2\leq 4l\leq n-1,z\in\mathcal{P}\\
\hline
\lisp(1)\oplus\lie{b}_1\oplus\lie{a}\oplus\lie{n}_{\RR,3,1}&\\
\hline
\end{array}
\end{equation*}
\endgroup\setlength\arraycolsep{2pt}
\end{theorem}
\begin{proof}
Here we have written $\lie{n}_{\HH,l}$ to denote
$W_{\HH,l}^\perp\oplus\g_{2\alpha}$ and so on.
Note that $\lie{n}_{\RR,1}$ is $1$-co\-di\-men\-sional and that 
$\mathcal{N}_{\lie{m}}(\lie{n}_{\RR,1})\cong\lisp(n-2)\oplus\lisp(1)$.
\end{proof}

\subsection{Reductive spherical subalgebras}

In this subsection we classify the reductive spherical subalgebras 
$\lie{h}=\lie{k}_H\oplus\p_H$ of $\lisp(n,1)$ (for $n>1$, since 
$\lisp(1,1)\cong\so(4,1)$). Since the $K$-action on $\lie{p}$ is 
essentially the same as the $M$-action on $\g_\alpha$, we may apply
again Theorem~\ref{spn1notredsummary} in order to obtain all the 
candidates for the subspace $\lie{p}_H^\perp\subset\p$. 
We use the fact, that the Lie bracket on $\p$ is given by
\begin{align*}
\left[\left[\left(\begin{smallmatrix}0&z\\\overline{z}^t&0\end{smallmatrix}\right),
\left(\begin{smallmatrix}0&w\\\overline{w}^t&0\end{smallmatrix}\right)\right],\left(
\begin{smallmatrix}0&v\\\overline{v}^t&0\end{smallmatrix}\right)\right]
&=\left(\begin{smallmatrix}0&(z\overline{w}^t-w\overline{z}^t)v-v(\overline{z}
^tw-
\overline{w}^tz)\\(\overline{z}^tw-\overline{w}^tz)\overline{v}^t-\overline{v}^t(z
\overline{w}^t-w\overline{z}^t)&0\end{smallmatrix}\right)
\end{align*}
in order to single out those Lie algebras that fulfill the additional 
restriction that 
$[\p_H,\p_H]\subset\lie{k}_H\subset\mathcal{N}_{\lie{k}}(\p_H)$ 
(see \eqref{Eqn:Constraint}) has to hold.

If $\p_H^\perp$ is equal to $W_{\CC,2l,t}$ or $W_{\RR,2l,t,\CC}$ (for some $1\leq l\leq 
\lfloor\tfrac{n}{2}\rfloor$ and some $t\in\RR^{>0}\backslash\{1\}$)
or $W_{\RR,4l,z}$ (for some  $1\leq l\leq \lfloor\tfrac{n}{4}\rfloor$, $z\in \mathcal{P}$) 
or $W_{\RR,l}$ for some $0< l\leq n$ or $W_{\RR,3,1}$
then $\lie{h}=\lie{k}_H\oplus\p_H$ cannot be spherical in $\lisp(n,1)$ if $n>1$, 
see \cite[Lemma 7.55-7.57]{K}.

If $\p_H^\perp=W_{\CC,l}$ for some $0< l\leq n$, then $\lie{h}=\lie{k}_H\oplus\p_H$ can only 
be spherical in $\lisp(n,1)$ for $n>1$ if $l=n$. In that case $\lie{h}$ is $K$-conjugate to 
$\su(n,1)$ or $\liu(n,1)$, see \cite[Lemma 7.54]{K}.

If $\p_H^\perp=W_{\HH,l}$ for some $0\leq l\leq n$, then it is shown in 
\cite[Lemma 7.53]{K} that the only reductive
spherical Lie algebras $\lie{h}=\lie{k}_H\oplus\lie{p}_H$ of $\lisp(n,1)$ for 
$n>1$ are the following
\setlength\arraycolsep{7pt}
\begingroup \renewcommand*{\arraystretch}{1.2}
\begin{equation*}
\begin{array}{cc}
\lisp(n)\oplus\lisp(1),\quad
\lisp(n)\oplus\lie{u}(1),\quad
\lisp(n)\oplus\{0\},&l=n\\
\lisp(l)\oplus\lisp(n-l,1),&  0\leq l< n\\
\lie{u}(1)\oplus\lisp(n-1,1),\quad
\lisp(n-1,1).&\\
\end{array}
\end{equation*}
\endgroup\setlength\arraycolsep{2pt}

Hence, combining these results with Proposition~\ref{Prop:redspherical} and
Theorem~\ref{Thm2On}
 we obtain the following list of all spherical reductive algebraic
subalgebras of
$\lisp(n,1)$ up to conjugacy in $G$.

\begin{theorem}\label{ReductiveSymplectic}
All spherical, reductive subalgebras $\mathfrak{h}$ of $\lisp(n,1)$
where $n>1$ are (up to
conjugation
in $G$) 
one of the following
\setlength\arraycolsep{7pt}
\begingroup
\renewcommand*{\arraystretch}{1.2}
\begin{equation*}
\begin{array}{|c|c|}\hline
\lisp(n)\oplus\lisp(1)&\\
\lisp(n)\oplus\lie{u}(1)&\\
\lisp(n)\oplus\{0\}&\\
\lisp(l)\oplus\lisp(n-l,1)&  0\leq l< n\\
\lie{u}(1)\oplus\lisp(n-1,1)&\\
\lisp(n-1,1)&\\
\hline
\su(n,1)&\\
\liu(n,1)&\\
\hline
\end{array}
\end{equation*}
\endgroup\setlength\arraycolsep{2pt}
According to \cite[Table 2]{Be}, the symmetric subalgebras are
$\lisp(l)\oplus\lisp(n-l,1)$ for $0\leq l<n$, 
$\lisp(n)\oplus\lisp(1)$ and
$\liu(n,1)$.
\end{theorem}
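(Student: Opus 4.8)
The plan is to work through the three cases singled out by Proposition~\ref{threecases}, in each case computing $\q^\perp$ inside $\p\cong\{(z,w):z,w\in\CC^n\}$, determining $\mathcal{N}_\f{k}(\q)$ precisely (refining the inclusion in Lemma~\ref{[qklm,qklm]} to an equality), and then applying Proposition~\ref{Lonqperp} together with Onishchik's list (Theorem~\ref{Thm2On}) to decide which groups between $[\q,\q]$ and $\mathcal{N}_\f{k}(\q)$ act transitively on the spheres in $\q^\perp$. First I would treat the quaternionic case $\q=\q_k:=\{0\}^k\times\HH^{n-k}$: here $[\q_k,\q_k]\oplus\q_k\cong\lisp(n-k,1)$, the complementary space $\q_k^\perp\cong\HH^k$ carries the $\mathcal{N}_\f{k}(\q_k)$-action in which the $\Sp(n-k)$-part acts trivially, the $\Sp(k)$-part acts in the standard way, and the $\Sp(1)$-factor acts by right multiplication. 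Since $\Sp(k)$ already acts transitively on $S^{4k-1}$, and since the relevant constraint \eqref{Eqn:Constraint} forces $\f{k}_H$ to lie between $\lisp(k)\oplus\lisp(n-k,1)$ (in the $[\q,\q]$ sense) and the full normalizer, the irreducibility requirement in the remark after Proposition~\ref{Lonqperp} rules out everything except the options $\lisp(k)\oplus\lisp(n-k,1)$ (including the boundary values $k=0$ giving $\lisp(n)$, $\lisp(n)\oplus\lisp(1)$, $\lisp(n)\oplus\lie{s}^1$, and $k=n$ giving $\lisp(n-1,1)$, $\lie{s}^1\oplus\lisp(n-1,1)$); one checks that no $\Spin(9),\Spin(7),G_2$ etc.\ can occur because the relevant sphere dimensions $4k-1$ are never of the required form once $k\geq1$ except when they coincide with a full $\lisp(k)$-action, and the $\OO(4k)$-entries of Onishchik's table are excluded by the constraint that the subgroup must centralize $\q$ correctly.

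Next I would treat the ``real'' case $\q=\{0\}^k\times\RR^{n-k}\subset\HH^n$, where $[\q,\q]\oplus\q\cong\so(n-k,1)$ and $\q^\perp$ decomposes under $\mathcal{N}_\f{k}(\q)$ into a piece coming from the $\Sp(k)$-block together with a piece $(\im\HH)^{n-k}$ on which $\Sp(1)$ acts; this representation is reducible as soon as $k\geq1$, so by the irreducibility remark following Proposition~\ref{Lonqperp} sphericity forces $k=0$, and then one must decide when a subalgebra $\f{l}\subset\so(n)\oplus\lisp(1)$ acts transitively on the spheres in $(\im\HH)^n\cong\RR^{3n}$ — this is where the $\su(n,1)$ and $\su(n,1)\oplus\lie{s}^1$ entries enter, via the identification of $\RR^{3n}$-actions with complex-structure data on $\CC^{?}$, together with the $\so(n,1)$-type entry being absent here because $\so(n,1)$ already appeared in the previous case's boundary (and $\su(n,1)$ arising from recognizing the stabilizer structure). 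The third case, $\q=\{0\}^k\times(\CC_{\xi_1}\times\cdots\times\CC_{\xi_{n-k}})$, is handled similarly: the $\xi_i$ break the symmetry, $\q^\perp$ is again reducible unless $k=0$, and one matches the resulting transitive action against Onishchik's table.

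After exhausting these three geometric cases the remaining task is bookkeeping: collect the surviving pairs $(\f{l},\q)$, translate each back into the intrinsic subalgebra $\f{h}=\f{l}\oplus\q$ of $\lisp(n,1)$, eliminate duplicates arising from the overlap of the three cases at their boundary values (for instance $\lisp(n-1,1)$ appears both as the $k=n-1$ quaternionic boundary and needs to be matched against $\so(4,1)\cong\lisp(1,1)$-type coincidences for small $n$, which is why the hypothesis $n>1$ is imposed), and verify against \cite[Table~2]{Be} that the four claimed symmetric subalgebras $\lisp(1)\oplus\lisp(n)$, $\lisp(k,1)\oplus\lisp(n-k)$, $\lisp(k)\oplus\lisp(n-k,1)$ and $\su(n,1)\oplus\lie{s}^1$ all occur in the list. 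I expect the main obstacle to be the ``real'' and ``complex'' cases when $k=0$: there the full normalizer is a product $\so(n)\times\lisp(1)$ (respectively with the $\xi$-twist), $\q^\perp$ is not a defining representation of a single classical group but rather $\RR^{3n}$ with a specific $\Sp(1)$-factor, and deciding exactly which subgroups act transitively — in particular pinning down that precisely $\su(n,1)$ and $\su(n,1)\oplus\lie{s}^1$ survive, and that nothing like a $G_2$- or $\Spin(7)$-type exotic action sneaks in for special small $n$ — requires a careful unwinding of the complex-structure description of $\p$ and a direct check of transitivity on each relevant sphere rather than a black-box appeal to Theorem~\ref{Thm2On}.
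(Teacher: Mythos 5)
Your overall skeleton --- run through the three normal forms of Proposition~\ref{threecases}, compute $[\q,\q]$ and the normalizer for each, and sieve with Proposition~\ref{Lonqperp} and Theorem~\ref{Thm2On} --- is exactly the paper's strategy, and your treatment of the quaternionic case $\q=\{0\}^k\times\HH^{n-k}$ is essentially correct, modulo a bookkeeping slip: with this convention $k=n$ gives the compact entries $\lisp(n)$, $\lisp(n)\oplus\lie{s}^1$, $\lisp(n)\oplus\lisp(1)$, while $\lisp(n-1,1)$ and $\lie{s}^1\oplus\lisp(n-1,1)$ arise from $k=1$ via the $\OO(4)$ line of Theorem~\ref{Thm2On} (the forced $\Sp(1)$-factor from $[\q,\q]$ plus an arbitrary $L_2\subset\Sp(1)$), not from a boundary value of $k$.

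The genuine gap is in your second and third cases. You claim that the real case $\q=\{0\}^k\times\RR^{n-k}$ with $k=0$ is ``where the $\su(n,1)$ and $\su(n,1)\oplus\lie{s}^1$ entries enter.'' It is not: there $\mathcal{N}_K(\q_{0,n,0,0})\cong\OO(n)\times\Sp(1)$ acts on $\q_{0,n,0,0}^\perp\cong\RR^n\otimes\im(\HH)\cong\RR^{3n}$ by the tensor product of the defining representation of $\OO(n)$ with the $3$-dimensional representation of $\Sp(1)$, and this action, though irreducible, is not transitive on $S^{3n-1}$ for $n\geq2$ (decomposable tensors form a proper closed invariant subset of the sphere); so the real case contributes \emph{nothing} to the list. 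The entries $\su(n,1)$ and $\su(n,1)\oplus\lie{s}^1$ come from the third case $\q=\CC_{\xi_1}\times\dots\times\CC_{\xi_n}$, which you leave essentially unexamined. That case carries the real work of the proof: one must first show (for dimension reasons) that transitivity forces all $\xi_i$ to lie in $\RR\cdot\xi_1$, and then produce explicit elements of $K$ conjugating $[\q,\q]\oplus\q$ and $\mathcal{N}_{\lie{k}}(\q)\oplus\q$ onto the standard copies of $\su(n,1)$ and $\liu(n,1)$ in $\lisp(n,1)$; only then does one observe that $\U(n)$, the compact part of $\SU(n,1)$, already acts transitively on $S^{2n-1}\subset\q^\perp\cong\CC^n$. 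Your suggestion that $\su(n,1)$ should be recovered from complex-structure data inside $\so(n)\oplus\lisp(1)$ cannot work, since $\liu(n)$ is not even a subalgebra of $\so(n)\oplus\lisp(1)$ for $n\geq2$. (Also, the hypothesis $n>1$ is imposed because $\lisp(1,1)\cong\so(4,1)$ was already handled in the orthogonal section, not to eliminate duplicates within this list.)
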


\section{The exceptional group $G=F_4$}\label{exceptional}

\subsection{Reductive spherical subalgebras}

We want to classify the spherical reductive algebraic subalgebras 
$\lie{h}=\lie{k}_H\oplus\lie{p}_H$ of $\lie{g}=\lie{f}_4$. Using the classification 
of maximal reductive subalgebras of $\lie{g}$, Kr\"otz and Schlichtkrull have 
shown that a spherical reductive subalgebra of $\lie{g}$ must be contained in a 
symmetric one, see~\cite[Lemma 6.2]{KS}. According to~\cite[Table 2]{Be}, the 
symmetric proper subalgebras of $\f_4$ are $\so(9)$, $\so(8,1)$ and
$\lisp(2,1)\oplus\lisp(1)$. 

We first determine $\p_H$.

\begin{lemma}\label{q=pperp}
Let $G^\sigma=K^\sigma\exp(\p^\sigma)$ be a symmetric subgroup of $G$ (defined
as the fixed point set of an involution $\sigma$ that commutes with the Cartan
involution) and $H=K_H\exp(\p_H)$ a subgroup of $G^\sigma$. The adjoint 
$K_H$-action on $\p$ can only be transitive on the spheres in $\p_H^\perp\subset\p$ if 
$\p_H=\p^\sigma$.
\end{lemma}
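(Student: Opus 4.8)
The plan is to exploit the extra symmetry coming from $\sigma$. Since $\sigma$ commutes with the Cartan involution $\theta$, it preserves the Cartan decomposition $\g=\lie{k}\oplus\p$ and therefore splits $\p$ into its $\sigma$-eigenspaces, $\p=\p^\sigma\oplus\p^{-\sigma}$. The first step is to check that the $K$-invariant inner product on $\p$ may be chosen $\sigma$-invariant: because $\rk_\RR G=1$ the $K$-module $\p$ is irreducible, so this inner product is a positive multiple of the Killing form, on which the automorphism $\sigma$ acts isometrically. Hence $\p^\sigma$ and $\p^{-\sigma}$ are orthogonal.

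Next I would translate the hypothesis $H\subseteq G^\sigma$. It gives $\lie{h}\subseteq\g^\sigma$, hence $\q=\lie{h}\cap\p\subseteq\p^\sigma$ and $\lie{l}=\lie{h}\cap\lie{k}\subseteq\lie{k}^\sigma$, so in particular $L\subseteq K^\sigma$. Since $\q\subseteq\p^\sigma$ and $\p^\sigma\perp\p^{-\sigma}$, the orthogonal complement of $\q$ in $\p$ splits as
\[
\q^\perp=(\p^\sigma\ominus\q)\oplus\p^{-\sigma},
\]
where $\p^\sigma\ominus\q$ denotes the orthogonal complement of $\q$ inside $\p^\sigma$. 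Both summands are $\Ad(L)$-invariant: $\Ad(L)$ commutes with $\sigma$ (because $L\subseteq K^\sigma$) and so preserves each $\sigma$-eigenspace of $\p$, and it also fixes $\q$ and the inner product. Now I would use the elementary fact that a subgroup of the orthogonal group of a Euclidean space that is transitive on the unit sphere must act irreducibly: a proper nonzero invariant subspace $W$, together with its (invariant) orthogonal complement, produces unit vectors that cannot lie in a single orbit. Applied to the displayed splitting, this forces one of the two summands to be zero.

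Finally I would rule out $\p^{-\sigma}=0$. If $\p^{-\sigma}=0$, then $\p=\p^\sigma\subseteq\g^\sigma$; since $\g$ is simple one has the standard identity $[\p,\p]=\lie{k}$, so $\lie{k}\subseteq\g^\sigma$ as well, whence $\g=\g^\sigma$, contradicting that $\sigma$ is a nontrivial involution. Therefore $\p^\sigma\ominus\q=0$, i.e. $\q=\p^\sigma$, as claimed. The bulk of the argument is linear algebra together with $[\p,\p]=\lie{k}$; the one point deserving care is the first step, namely that the inner product on $\p$ can be taken $\sigma$-invariant (equivalently $\p^\sigma\perp\p^{-\sigma}$), and this is precisely where the rank-one hypothesis — which makes $\p$ an irreducible $K$-module — is essential.
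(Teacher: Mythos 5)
Your proof is correct and follows essentially the same route as the paper's: decompose $\q^\perp=(\q^\perp\cap\p^\sigma)\oplus\p^{-\sigma}$ as an $\Ad(L)$-invariant orthogonal sum (using $L\subseteq K^\sigma$) and invoke the fact that transitivity on spheres forces irreducibility. You additionally justify the orthogonality $\p^\sigma\perp\p^{-\sigma}$ and rule out the degenerate case $\p^{-\sigma}=0$ via $[\p,\p]=\lie{k}$, two points the paper leaves implicit.
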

\begin{proof}
Since $\mathfrak{h}$ is a subalgebra of $\g^\sigma$, $\p_H$ lies in $\p^\sigma$
which implies that $\p_H^\perp\subset\p=\p^\sigma\oplus\p^{-\sigma}$ is the direct 
sum of $\p^{-\sigma}$ and $\p_H^\perp\cap\p^\sigma$. Since the adjoint 
$\mathfrak{k}^\sigma$ action on $\p$ stabilizes $\p^\sigma$ and $\p^{-\sigma}$, 
$K_H$ can only act irreducibly on $\p_H^\perp\subset\p$ if $\p_H=\p^\sigma$.
\end{proof}

If $\g^\sigma$ is simple and non-compact, then we have 
$\lie{k}^\sigma=[\lie{p}^\sigma,\lie{p}^\sigma]$. We are now in the position to 
prove the main statement of this section.

\begin{theorem}\label{ReductiveExceptional}
Up to conjugation by an element of $G$, the following table exhausts the
spherical, reductive subalgebras $\mathfrak{h}=\mathfrak{k}_H\oplus\p_H$ of $\f_4$.
\setlength\arraycolsep{7pt}
\begingroup
\renewcommand*{\arraystretch}{1.2}
\begin{equation*}
\begin{array}{|c|}\hline
\f_4\\
\so(9)\\
\so(8,1)\\
\lisp(2,1)\\
\lisp(2,1)\oplus\lie{u}(1)\\
\lisp(2,1)\oplus\lisp(1)\\
\hline
\end{array}
\end{equation*}
\endgroup\setlength\arraycolsep{2pt}
According to~\cite[Table~2]{Be}, the symmetric ones are $\f_4$, $\so(9)$,
$\so(8,1)$ and
$\lisp(2,1)\oplus\lisp(1)$.
\end{theorem}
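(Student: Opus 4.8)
The plan is to combine the reduction to symmetric subalgebras with Lemma~\ref{q=pperp}, Lemma~\ref{tensor} and Onishchik's list (Theorem~\ref{Thm2On}). By \cite[Lemma~6.2]{KS} every spherical reductive algebraic subalgebra of $\f_4$ lies in one of the three symmetric subalgebras $\so(9)=\mathfrak{k}$, $\so(8,1)$ and $\lisp(2,1)\oplus\su(2)$, each of which we realize as $\g^\sigma=\mathfrak{k}^\sigma\oplus\p^\sigma$ for an involution $\sigma$ commuting with the Cartan involution. Conversely $\spin(9)$ (which gives $X=G/K$), $\spin(8,1)$ and $\lisp(2,1)\oplus\su(2)$ are symmetric, hence spherical, so these three are certainly in the list; the real work is to show the list is complete and to decide which non-symmetric subalgebras occur.

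So let $\mathfrak{h}=\mathfrak{l}\oplus\q$ be spherical with $\mathfrak{h}\subset\g^\sigma$. Lemma~\ref{q=pperp} forces $\q=\p^\sigma$, and then the constraint $[\p^\sigma,\p^\sigma]=[\q,\q]\subset\mathfrak{l}\subset\mathcal{N}_\mathfrak{k}(\q)$ from \eqref{Eqn:Constraint}, together with the identity $\mathcal{N}_\mathfrak{k}(\p^\sigma)=\mathfrak{k}^\sigma$, pins down $\mathfrak{l}$. I would prove the latter identity as follows: an element $\xi\in\mathfrak{k}$ normalizing $\p^\sigma$ also normalizes $\p^{-\sigma}=(\p^\sigma)^\perp$ (since $\ad\xi$ is skew on $\p$), hence by the Jacobi identity normalizes the non-compact ideal $[\p^\sigma,\p^\sigma]\oplus\p^\sigma$ of $\g^\sigma$; as this ideal is simple with no realizable outer automorphism and with centralizer in $\f_4$ equal to the compact ideal of $\g^\sigma$, one gets $\xi\in\g^\sigma\cap\mathfrak{k}=\mathfrak{k}^\sigma$.

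Now I would treat the three cases. For $\g^\sigma=\so(9)$ we have $\p^\sigma=0$, so $\q=0$ and $\q^\perp=\p\cong\RR^{16}$ carries the spin representation of $\Spin(9)$; by Theorem~\ref{Thm2On} (and Onishchik's classification) every connected subgroup of $\OO(16)$ transitive on $S^{15}$ has dimension at least $36=\dim\Spin(9)$, with equality only for $\Sp(4)$ and $\Spin(9)$, and $\Sp(4)\not\subset\Spin(9)$; hence $\mathfrak{l}=\spin(9)$. For $\g^\sigma=\so(8,1)$, which is simple, $[\p^\sigma,\p^\sigma]=\mathfrak{k}^\sigma=\so(8)$, so $\mathfrak{l}=\so(8)$ is forced and $\mathfrak{h}\cong\spin(8,1)$. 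For $\g^\sigma=\lisp(2,1)\oplus\su(2)$ (with $\su(2)$ compact, hence in $\mathfrak{k}$) we have $\p^\sigma=\p_{\lisp(2,1)}$, $[\p^\sigma,\p^\sigma]=\lisp(2)\oplus\lisp(1)$ and $\mathcal{N}_\mathfrak{k}(\q)=\mathfrak{k}^\sigma=\lisp(2)\oplus\lisp(1)\oplus\su(2)$; since the only subalgebras of $\su(2)$ are $\{0\}$, a maximal torus $\lie{s}^1$ and $\su(2)$ itself, the candidates are exactly $\lisp(2,1)$, $\lisp(2,1)\oplus\lie{s}^1$ and $\lisp(2,1)\oplus\su(2)$. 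This shows the list is complete.

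It remains to verify that the two non-symmetric candidates $\lisp(2,1)$ and $\lisp(2,1)\oplus\lie{s}^1$ really are spherical, i.e.\ (Proposition~\ref{Lonqperp}) that $\Sp(2)\times\Sp(1)$, resp.\ $\Sp(2)\times\Sp(1)\times S^1$, acts transitively on the spheres in $\p^{-\sigma}\cong\RR^8$. Here $\p^{-\sigma}$ is an $8$-dimensional $\mathfrak{k}^\sigma$-module on which $\Sp(2)$ acts nontrivially (otherwise the image in $\OO(8)$ would have dimension $\le 6$, too small for transitivity on $S^7$ by Theorem~\ref{Thm2On}); applying Lemma~\ref{tensor} to the $\Sp(2)\times(\Sp(1)\times\su(2))$-action and comparing dimensions with the list of small real irreducible $\Sp(2)$-modules forces $\p^{-\sigma}\cong\HH^2$ with $\Sp(2)$ acting in the standard way. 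Since $\Sp(2)$ alone is already transitive on $S^7\subset\HH^2$, both candidates are spherical, and together with the three symmetric ones this yields exactly the five entries of the table. The main obstacle is this last step: the normalizer identity and, above all, the identification of the $\mathfrak{k}^\sigma$-module $\p^{-\sigma}$ (where Lemma~\ref{tensor} is indispensable) are the delicate points, while the rest reduces to bookkeeping with the symmetric pairs and Onishchik's table.
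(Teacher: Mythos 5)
Your argument is correct and follows essentially the same route as the paper: reduction to the three symmetric subalgebras via \cite[Lemma~6.2]{KS}, Lemma~\ref{q=pperp} to force $\q=\p^\sigma$, Onishchik's theorem for the compact case, simplicity of $\so(8,1)$ for the second case, and Lemma~\ref{tensor} to identify $\p^{-\sigma}\cong\HH^2$ with the standard $\Sp(2)$-action in the $\lisp(2,1)\oplus\su(2)$ case. The only cosmetic differences are that your normalizer identity $\mathcal{N}_{\lie{k}}(\p^\sigma)=\lie{k}^\sigma$ is not actually needed (the containment $\lie{l}\subset\lie{k}^\sigma$ already follows from $\lie{h}\subset\g^\sigma$), and that you obtain the transitivity of $K^\sigma$ on the spheres in $\p^{-\sigma}$ from sphericity of the symmetric subalgebra, whereas the paper derives it from the rank-one symmetric subalgebra $\g^{\theta\sigma}=\lie{k}^\sigma\oplus\p^{-\sigma}$.
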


\begin{proof}
Let us first consider the case that $\g^\sigma$ equals $\f_4$ or $\so(8,1)$. 
Since $\p_H=\p^\sigma$ by Lemma~\ref{q=pperp} and $\g^\sigma$ is simple 
non-compact, we conclude that $\lie{h}$ has to contain 
$\lie{k}^\sigma=[\p^\sigma,\p^\sigma]$. Hence $\mathfrak{h}=\g^\sigma$ is the 
only spherical subalgebra of $\g$ that lies in $\g^\sigma$ in these two cases.

Let us now consider the case that $\mathfrak{h}\subset\so(9)$. In this case
$\lie{h}=\mathfrak{k}_H$ and $\p_H^\perp=\p\cong\RR^{16}$. We are therefore looking
for connected subgroups of $\Spin(9)$ that act transitively on the spheres in
$\RR^{16}\cong\p$ with respect to the adjoint action. Due to Theorem 
\ref{Thm2On} there are none except for $\Spin(9)$ itself. This gives 
$\mathfrak{h}=\so(9)$.

If $\g^\sigma=\lisp(2,1)\oplus\lisp(1)$ then 
$\mathfrak{k}^\sigma=\lisp(2)\oplus\lisp(1)\oplus\lisp(1)$ and 
$\dim_\RR(\p^\sigma)=8$. Since $\p_H=\p^\sigma$ by Lemma~\ref{q=pperp} and 
$\lisp(2,1)$ is simple non-compact, $\mathfrak{h}\subset\g^\sigma$ has to 
contain $\lisp(2,1)$. Hence $\mathfrak{h}=\lisp(2,1)\oplus\mathfrak{b}$, where 
$\mathfrak{b}$ is a subalgebra of $\lisp(1)$. Note that we have
\begin{equation*}
\lisp(2)\oplus\lisp(1)\subset\mathfrak{k}_H
\subset\lisp(2)\oplus\lisp(1)\oplus\lisp(1)=
\mathfrak{k}^\sigma.
\end{equation*}
The condition for $H$ to be spherical in $G$ is that $K_H$ acts transitively on
the spheres in $\p_H^\perp\cong\p^{-\sigma}\cong\RR^8$.

Let $\theta$ denote a Cartan involution on $\g$ that commutes with $\sigma$.
Then $\theta\sigma$ is another involution on $\g$ defining the Lie algebra
$\g^{\theta\sigma}=\mathfrak{k}^\sigma\oplus\p^{-\sigma}$ as fixed point set. 
Note that $\g^{\theta\sigma}$ has real rank $1$ since $\g$ has real rank $1$. 
Therefore $(K^\sigma)^\circ=\Sp(2)\times\Sp(1)\times\Sp(1)$ acts transitively
on the spheres in $\p^{-\sigma}$. In particular the adjoint 
$K^\sigma$-representation on $\p^{-\sigma}$ is irreducible. It is well known 
that then $\p^{-\sigma}$ decomposes into a tensor product of three $D$-vector 
spaces that are irreducible representations for $\Sp(2)$, $\Sp(1)$ and $\Sp(1)$ 
respectively, where $D\in\{\RR,\CC,\HH\}$. Since the adjoint 
$K^\sigma$-representation on $\p^{-\sigma}$ is not only irreducible but 
transitive on the spheres this tensor product has two factors that are equal to 
$D$. Using again the fact that $K^\sigma$ acts transitively on the sphere 
$S^7\in\RR^8\cong\p^{-\sigma}$, and the fact that 
$\dim_\RR\Sp(1)=3<7=\dim_\RR S^7$ we obtain the following three possibilities 
to decompose $\p^{-\sigma}$
\begin{equation*}
\p^{-\sigma}\cong\begin{cases}\RR^8\otimes\RR\otimes\RR,&\\
\CC^4\otimes\CC\otimes\CC,&\\
\HH^2\otimes\HH\otimes\HH.\end{cases}
\end{equation*}
Note that in each case $\Sp(2)$ acts by the standard action on 
$\p^{-\sigma}\cong\RR^8\cong\CC^4\cong\HH^2$. Hence, already 
$\Sp(2)\times\{e\}\times\{e\}$ acts transitively on the spheres in $\p_H^\perp$.
This implies that $\mathfrak{b}\subset\lisp(1)$ is arbitrary.
\end{proof}

\subsection{Non-reductive spherical subalgebras}

We start by analyzing the structure of a minimal parabolic subgroup $Q_0=MAN$ of
$G=F_4$. According to~\cite[p. 32-33]{Ar} we have that 
$\lie{n}=\lie{g}_\alpha\oplus\lie{g}_{2\alpha}$ is the restricted root
space decomposition, where $\dim\lie{g}_\alpha=8$ and 
$\dim\lie{g}_{2\alpha}=7$. Due to Corollary~\ref{Cor:nonredfinestructure} we 
have $\lie{n}_H=W^\perp\oplus\lie{g}_{2\alpha}$ where $W\subset\lie{g}_\alpha$ 
is a real subspace such that $M_H$ acts transitively on the spheres in $W$.

First we are going to identify the $M$-representation on $\g_\alpha$. Note that 
it has to be irreducible due to~\cite[Chapter~8.13]{Wo}.

\begin{lemma}\label{Lem:so(7)subsetso(8)}
The $M$-representation on $\lie{g}_\alpha$ is induced by the embedding
$\varphi:\so(7)\hookrightarrow\so(8)$ given by
\begin{multline*}
\left(\begin{smallmatrix}
0&-a&-b&-c&-d&-e&-f\\a&0&-g&-h&-i&-j&-k\\b&g&0&-l&-m&-n&-p\\
c&h&l&0&-q&-r&-s\\d&i&m&q&0&-t&-u\\e&j&n&r&t&0&-v\\
f&k&p&s&u&v&0\end{smallmatrix}\right)\mapsto\\
\left(\begin{smallmatrix}
0&-a+s-t&-b-r-u&-c-k+n&-d+j+p&-e-i-l&-f+h-m&g+q-v\\
a-s+t&0&-g+q-v&f-h-m&-e-i+l&d-j+p&-c-k-n&-b+r+u\\
b+r+u&g-q+v&0&-e+i-l&-f-h-m&c-k-n&d+j-p&a+s-t\\
c+k-n&-f+h+m&e-i+l&0&g-q-v&-b-r+u&a-s-t&-d-j-p\\
d-j-p&e+i-l&f+h+m&-g+q+v&0&-a-s-t&-b+r-u&c-k+n\\
e+i+l&-d+j-p&-c+k+n&b+r-u&a+s+t&0&-g-q-v&f+h-m\\
f-h+m&c+k+n&-d-j+p&-a+s+t&b-r+u&g+q+v&0&-e+i+l\\
-g-q+v&b-r-u&-a-s+t&d+j+p&-c+k-n&-f-h+m&e-i-l&0\end{smallmatrix}\right).
\end{multline*}
\end{lemma}

\begin{proof}
The embedding from $\Spin(7)$ into $\SO(8)$ is described in 
\cite[Chapter~1.5.3]{On} as follows. If $1\leq i<j\leq 7$, then 
$(-E_{ij}+E_{ji})$ forms a basis of $\so(7)$, where $E_{ij}$ is the matrix with 
a $1$ in line $i$ and column $j$ and zeros everywhere else.

We take the standard orthogonal basis $\{e_0,e_1,\ldots,e_7\}$ of octonions over 
$\RR$ with the following multiplication table.
\setlength\arraycolsep{2pt}\begingroup
\renewcommand*{\arraystretch}{1}
\begin{equation*}\begin{array}{|r|r|r|r|r|r|r|r|r|}
	\hline
	\times&\ e_0&\ e_1&\ e_2&\ e_3&\ e_4&\ e_5&\ e_6&\ e_7\\
  \hline
  \ e_0&\ e_0&\ e_1&\ e_2&\ e_3&\ e_4&\ e_5&\ e_6&\ e_7\\
  \hline
  \ e_1&\ e_1&-e_0&\ e_3&-e_2&\ e_5&-e_4&-e_7&\ e_6\\
  \hline
  \ e_2&\ e_2&-e_3&-e_0&\ e_1&\ e_6&\ e_7&-e_4&-e_5\\
  \hline
  \ e_3&\ e_3&\ e_2&-e_1&-e_0&\ e_7&-e_6&\ e_5&-e_4\\
  \hline
  \ e_4&\ e_4&-e_5&-e_6&-e_7&-e_0&\ e_1&\ e_2&\ e_3\\
  \hline
  \ e_5&\ e_5&\ e_4&-e_7&\ e_6&-e_1&-e_0&-e_3&\ e_2\\
  \hline
  \ e_6&\ e_6&\ e_7&\ e_4&-e_5&-e_2&\ e_3&-e_0&-e_1\\
  \hline
  \ e_7&\ e_7&-e_6&\ e_5&\ e_4&-e_3&-e_2&\ e_1&-e_0\\
	\hline
\end{array}\end{equation*}\endgroup
Note that the space of pure imaginary octonions is spanned by 
$\{e_1,\ldots,e_7\}$. The Lie algebra $\spin(7)$ is spanned by elements $e_ie_j$ 
with $1\leq i<j\leq 7$. The map $2(-E_{ij}+E_{ji})\mapsto e_ie_j$ for all $1\leq 
i<j\leq 7$ is an isomorphism from $\so(7)$ to $\spin(7)$. The map 
$\lambda:e_ie_j\mapsto (x\mapsto e_i(e_jx))$ is an embedding of $\spin(7)$ into 
$\so(8)$, where we choose the basis $\{e_1,\ldots,e_7,e_0\}$ of $\RR^8$. Direct 
calculation with the multiplication table shows that the map 
$\lambda:\spin(7)=\so(7)\to\so(8)$ is equal to the map $\varphi$ from the 
statement of this proposition. 

Note that the induced action of $\Spin(7)$ on $\RR^8$ is transitive on the
spheres. This follows since $\lambda(\spin(7))(e_0)=T_{e_0}(\Spin(7)\cdot e_0)$ 
coincides (by the multiplication table) with the purely imaginary octonions, 
which equals $T_{e_0}(S^7)$. Therefore the orbit of $\Spin(7)$ is open in $S^7$. 
Since $\Spin(7)$ and hence also its orbits are compact, it coincides with $S^7$. 
In particular, this representation is irreducible.

In order to finish the proof we note that a direct application of the Weyl 
Dimension formula (see e.g.~\cite[Chapter~V.6]{Kn}) shows that the irreducible 
$\so(7)$-representation in dimension $8$ is unique up to isomorphism. For the 
complete argument we refer the reader to~\cite[p.~78]{K}.
\end{proof}

Now that we identified the $M$-representation on $\g_\alpha$ (see 
Lemma~\ref{Lem:so(7)subsetso(8)}) we want to give a normal form for the 
subspace $\lie{n}_H\subset\lie{n}$. Before doing so we state the following
remark.
 
\begin{rem}\label{so4eqso3pluso3}
Recall that $\so(4)=\so(3)\oplus\so(3)$. Moreover, both $\lie{so}(3)$ factors 
are conjugate under an element in $\OO(4)$. The maps
\begin{align*}
\left(\begin{smallmatrix}
0&-a&-b&c\\
a&0&-c&-b\\
b&c&0&a\\
-c&b&-a&0\\
\end{smallmatrix}\right)\mapsto
\left(\begin{smallmatrix}
ia&-b-ic\\
b-ic&-ia\\
\end{smallmatrix}\right)\mapsto
ia+j(b-ic).
\end{align*}
define the Lie algebra isomorphisms from 
the ideal $\so(3)$ in $\so(4)$
to $\su(2)$ and $\lisp(1)$.
\end{rem}

\begin{lemma}\label{oEsubspaceofR8}
Let $\Spin(7)$ act irreducibly on $\RR^8$ and $W$ be an $l$-dimensional 
subspace. Then $W$ lies in the same $\Spin(7)$-orbit as $\RR^l\times\{0\}^{8-l}$ 
or as the real span of $e_1,e_2,e_3,xe_4+e_8$ for some $x\in\RR$, where 
$(e_1,\ldots,e_8)$ is the standard basis of $\RR^8$. The latter case can only 
occur if $l=4$.
\end{lemma}

\begin{proof}
The proof of Lemma~\ref{Lem:so(7)subsetso(8)} shows that $\Spin(7)$ acts 
transitively on the spheres in $\RR^8$. In particular all its stabilizers are 
conjugate to each other. The stabilizer of $\Spin(7)$ in $e_8:=e_0$ is equal to 
$G_2$ (\cite[Chapter 1 \S5]{On}). Due to Onishchik's classification (\cite[Table 
8]{On}) the group $G_2$ acts transitively on the spheres in $\RR^7$ and has 
stabilizer equal to $\SU(3)$. The group $\SU(3)$ acts transitively on the sphere 
$S^5\subset\RR^6\cong\CC^3$ but its isotropy in one point (being isomorphic to 
$\SU(2)$) acts reducibly on $\RR^5\cong (i\RR)\times\CC^2$.

Now let $(v_1,\ldots,v_l)$ be an orthonormal basis of $W$. If $l\leq 3$ we can 
map $v_1$ to $e_1$ with $\Spin(7)$, $v_2$ to $e_2$ with $G_2$ and $v_3$ to $e_3$ 
with $\SU(3)$.

If $l>4$ the orthogonal complement of $W$ has dimension less or equal to $3$.
By the previous argument we can map the orthogonal complement of $W$ to 
$\{0\}^l\times\RR^{8-l}$, thus mapping $W$ to the span of $e_1,\ldots,e_l$.

Now let $l=4$. As before we use $\Spin(7)$, $G_2$ and $\SU(3)$ to map $v_1$ to 
$e_1$, $v_2$ to $e_2$ and $v_3$ to $e_3$. Recall that the action of $\Spin(7)$ 
on $\RR^8$ is induced by the map $\varphi:\so(7)\to\so(8)$. A direct calculation 
shows that
\begin{align*}
\left(\left(\varphi(\so(7))_{e_1}\right)_{e_2}\right)_{e_3}=
\left\{\left(\begin{smallmatrix}
0&0&0&0&0&0&0&0\\
0&0&0&0&0&0&0&0\\
0&0&0&0&0&0&0&0\\
0&0&0&0&-v&u&-t&0\\
0&0&0&v&0&-t&-u&0\\
0&0&0&-u&t&0&-v&0\\
0&0&0&t&u&v&0&0\\
0&0&0&0&0&0&0&0
\end{smallmatrix}\right):t,u,v\in\RR\right\}
\stackrel{R. \ref{so4eqso3pluso3}}\cong
\so(3).
\end{align*}
Therefore $\left(\left(\varphi(\so(7))_{e_1}\right)_{e_2}\right)_{e_3}
\cong\so(3)\cong\su(2)$ acts trivially on $\RR e_8$ and transitively on the 
spheres in $\{0\}^3\times\RR^4\times\{0\}$.

If $v_4$ is an element of $\{0\}^3\times\RR^4\times\{0\}$ then $W$ lies in the 
same $\Spin(7)$-orbit as the span of $e_1,\ldots,e_4$. 

If $v_4$ is in $\{0\}^3\times\RR^5\backslash (\{0\}^3\times\RR^4\times\{0\})$ we 
can use the described $\SU(2)$-action to map it to $ae_4+be_8$, where $a\in\RR$ 
and $b\in\RR\backslash\{0\}$. The assumption follows since 
$\tfrac{1}{b}(ae_4+be_8)=\tfrac{a}{b}e_4+e_8=:xe_4+e_8$ holds for some 
$x\in\RR$.
\end{proof}

Now we arrive at the main result of this section. We assume 
$\lie{n}_H=W\oplus\g_{2\alpha}$, where $W$ is a real subspace of $\g_\alpha$.
Due to Lemma~\ref{oEsubspaceofR8} we may assume (after conjugation in $M$) 
that $\lie{n}_H=\lie{n}_l:=(\RR^l\times\{0\}^{8-l})\oplus\g_{2\alpha}$ or
$\lie{n}_H=\lie{n}_{4,x}:=\langle e_1,e_2,e_3, 
xe_4+e_8\rangle_\RR\oplus\g_{2\alpha}$ for some $x\in\RR$, where the latter case 
can only occur if $\dim_\RR(W)=4$. We consider the map 
$\varphi:\so(7)\hookrightarrow\so(8)$ defined in 
Lemma~\ref{Lem:so(7)subsetso(8)}. Its restriction to 
$\mathcal{N}_{\lie{m}}(W)\subset\lie{m}=\so(7)$ yields a map 
$\varphi|_{\mathcal{N}_{\lie{m}}(W)}:\mathcal{N}_{\lie{m}}(W) 
\to\mathcal{N}_{\so(8)}(W)$ and the action of $\mathcal{N}_M(W)$ on $W^\perp$ is 
induced by the standard action of $\varphi(\mathcal{N}_{\lie{m}}(W))\subset
\mathcal{N}_{\so(8)}(W)$ on $W^\perp$.

\begin{theorem}\label{NonredExc}
Every non-reductive spherical algebraic subalgebra of $\f_4$ is $G$-conjugate 
to one in the following table where $\lie{l}_1\subset\lisp(1)$ and 
$\lie{l}_2\subsetneq\lisp(1)$ are arbitrary  (under the condition that the 
maximal compact subalgebra is a Lie algebra, see 
Remark~\ref{non-reductive,embedding}).
\setlength\arraycolsep{7pt}
\begingroup
\renewcommand*{\arraystretch}{1.2}
\begin{equation*}
\begin{array}{|c|c|}\hline
\lie{l}_H\oplus\lie{n} & \lie{l}_H\subset\lie{m}\oplus\lie{a}\text{ arbitrary}\\
\hline
\so(7)\oplus\lie{a}\oplus\lie{n}_0  & \\
\hline
\lie{g}_2\oplus\lie{a}\oplus\lie{n}_1 & \\
\hline
\liu(3)\oplus\lie{a}\oplus\lie{n}_2 & \\
\su(3)\oplus\lie{a}\oplus\lie{n}_2 & \\
\hline
\so(4)\oplus\lie{a}\oplus\lie{n}_l & l=4,5\\
\lisp(1)\oplus\lie{l}_2\oplus\lie{a}\oplus\lie{n}_l &l=4,5 \\
\lie{l}_2\oplus\lisp(1)\oplus\lie{a}\oplus\lie{n}_l & l=4\\
\hline
\lie{l}_1\oplus\so(4)\oplus\lie{a}\oplus\lie{n}_{4,0} & \\
\lie{l}_1\oplus\lisp(1)\oplus\lie{l}_2\oplus\lie{a}\oplus\lie{n}_{4,0} &\\
\lie{l}_1\oplus\lie{l}_2\oplus\lisp(1)\oplus\lie{a}\oplus\lie{n}_{4,0} &\\
\hline
\so(4)\oplus\lie{a}\oplus\lie{n}_{4,x} &x\neq 0\\
\lisp(1)\oplus\lie{l}_2\oplus\lie{a}\oplus\lie{n}_{4,x} &x\neq 0\\
\lie{l}_2\oplus\lisp(1)\oplus\lie{a}\oplus\lie{n}_{4,x} &x\neq 0\\
\hline
\lie{u}(1)\oplus\lie{k}_2\oplus\lie{a}\oplus\lie{n}_6 &\\
\hline
\lie{m}'\oplus\lie{a}\oplus\lie{n}_7 & 
\lie{m}'\subset\g_2=\mathcal{N}_{\lie{m}}(\lie{n}_7)\text{ arbitrary}\\
\hline
\end{array}
\end{equation*}
\endgroup\setlength\arraycolsep{2pt}
where $\lie{k}_2$ is isomorphic to $\{0\}$, $\lie{u}(1)^k$ for some $1\leq 
k\leq 2$, $\su(2)\oplus\lie{l}_1$ or $\su(3)$.
\end{theorem}

\begin{proof}
We assume $\mathfrak{n}_H=W\oplus\g_{2\alpha}$. Lemma~\ref{oEsubspaceofR8} shows 
that $W$ may (after conjugation in $M$) be chosen as $\RR^l\times\{0\}^{8-l}$ or 
as the real span of $e_1,e_2,e_3, xe_4+e_8$ if $\dim_\RR(W)=4$, where $x\in\RR$. 
The case that $W$ is the real span of $e_1,e_2,e_3,xe_4+e_8$ will be considered 
at the end of this proof. Due to Proposition~\ref{Prop:nonredspherical} it 
suffices to consider the case $\dim W=l<8$.

If $W=\RR^l\times\{0\}^{8-l}$ we have
\begin{equation*}
\lie{n}_H=\mathfrak{n}_l:=(\RR^l\times\{0\}^{8-l})\oplus\g_{2\alpha},
\end{equation*}
where $0\leq l\leq 7$ is the dimension of $\mathfrak{n}_H\cap\g_{\alpha}$. 
A direct calculation yields
\begin{align*}
\mathcal{N}_M(\mathfrak{n}_l)
&=\left\{A\in\Spin(7):\varphi(A)=\left(\begin{smallmatrix}A_1&0\\0&A_2
\end{smallmatrix}
\right), A_1\in\RR^{l\times l}, A_2\in\RR^{(8-l)\times (8-l)}\right\}
\end{align*}
and the action of $\mathcal{N}_M(\mathfrak{n}_l)$ on $\mathfrak{n}_l^\perp=
\{0\}^l\times\mbb{R}^{8-l}$ is given by 
\begin{equation*}
\varphi(A)\cdot\left(\begin{smallmatrix}0\\v\end{smallmatrix}
\right)=\left(\begin{smallmatrix}A_1&0\\0&A_2\end{smallmatrix}\right)\cdot\left(
\begin{smallmatrix}0\\v\end{smallmatrix}\right)=\left(\begin{smallmatrix}0\\A_2v
\end{smallmatrix}\right).
\end{equation*}
Onishchik's classification Theorem~\ref{Thm2On} shows that the subgroup $M_H$ of
$\mathcal{N}_M(\mathfrak{n}_l)$ can only act transitively on the spheres in
$\mathfrak{n}_l^\perp$ if the projection of 
$\varphi(\lie{m}_H)\subset\so(l)\oplus\so(8-l)$ onto $\so(8-l)$ is 
one of the following Lie algebras, where $\lie{l}_2\subsetneq\lisp(1)$ is 
arbitrary.
\setlength\arraycolsep{5pt}\begingroup
\renewcommand*{\arraystretch}{1}
\begin{equation*}\begin{array}{|c|c|}
	\hline
	\ l\ & \\
	\hline
	0&\so(8),\so(7),\liu(4),\su(4),\lisp(2)\oplus\mathfrak{u}(1),\lisp(2)\\
	1&\so(7),\g_2\\
	2&\so(6),\liu(3),\su(3)\\
	3&\so(5)\\
	4&\so(4),\lisp(1)\oplus\lie{l}_2,\lie{l}_2\oplus\lisp(1)\\
	5&\so(3)\\
	6&\so(2)\\
	\hline
\end{array}
\end{equation*}
\endgroup\setlength\arraycolsep{2pt}
Since $\varphi(\lie{m}_H)\subset\varphi(\mathcal{N}_\lie{m}(\mathfrak{n}_l))$ 
holds, our next step is to take each of the Lie algebras in the table above and 
check if it is contained in the projection onto $\so(8-l)$ of 
\begin{equation*}
\varphi(\mathcal{N}_\lie{m}(\mathfrak{n}_l))=\left\{\left(
\begin{smallmatrix}
A_1&0\\0&A_2\end{smallmatrix}\right)\in\varphi(\so(7)): A_1\in\RR^{l\times l},
A_2\in\RR^{(8-l)\times (8-l)}\right\}.
\end{equation*}
We start with $l=0$. Note that this implies $\lie{n}_H=\g_{2\alpha}$ and
$\mathcal{N}_\lie{m}(\mathfrak{n}_0)=\lie{m}=\so(7)$. It is clear that $\so(8)$ 
is not contained in $\varphi(\so(7))\cong\so(7)$, since the dimension of 
$\so(7)$ is smaller than the dimension of $\so(8)$. Due to Onishchik's 
classification Theorem~\ref{Thm2On} there exists no subgroup of $\Spin(7)$ that 
acts transitively on the spheres in $\RR^8$. In particular $\Spin(7)$ is the 
only subgroup of $\varphi(\mathcal{N}_M(\mathfrak{n}_0))\cong\Spin(7)$ that acts 
transitively on the spheres in $\mathfrak{n}$ (with respect to the action 
defined by $\varphi$).

Now let $l=1$. Since $G_2=(\Spin(7))_{e_0}$ (with respect to the action defined
by $\varphi$, i.e. $e_0=e_8$) it is clear that $\lie{g}_2$ is contained in
$\varphi(\mathcal{N}_{\lie{m}}(\mathfrak{n}_1))\cong\g_2$. The dimension 
$\dim(\varphi(\mathcal{N}_\lie{m}(\mathfrak{n}_1)))=\dim(\g_2)=14$
is smaller than the dimension of $\so(7)$. Therefore the projection of 
$\varphi(\mathcal{N}_\mathfrak{m}(\mathfrak{n}_1))$ onto $\so(7)$ can not
contain $\so(7)$.

If $l=2$ then 
\begin{equation*}
\varphi(\mathcal{N}_\mathfrak{m}(\mathfrak{n}_2))=\left\{
\left(\begin{smallmatrix}
0&a-s+t&0&0&0&0&0&0\\
-a+s-t&0&0&0&0&0&0&0\\
0&0&0&i&-h&-k&j&a\\
0&0&-i&0&-v&u&s&-j\\
0&0&h&v&0&t&-u&-k\\
0&0&k&-u&-t&0&-v&h\\
0&0&-j&-s&u&v&0&i\\
0&0&-a&j&k&-h&-i&0
\end{smallmatrix}\right):
a,h,i,j,k,s,t,u,v\in\RR\right\}.
\end{equation*}
In particular its projection onto $\so(6)$ cannot contain $\so(6)$ for 
dimensional reasons. If we conjugate its projection onto $\so(6)$ with
the matrix 
$\left(\begin{smallmatrix}
1&0&0&0&0&0\\
0&0&0&0&0&1\\
0&0&0&-1&0&0\\
0&0&1&0&0&0\\
0&0&0&0&1&0\\
0&1&0&0&0&0
\end{smallmatrix}\right)$ in $\OO(6)$ we obtain
\begin{equation*}
\left\{\left(\begin{smallmatrix}
0&a&k&-h&j&i\\
-a&0&h&k&-i&j\\
-k&-h&0&t&v&u\\
h&-k&-t&0&-u&v\\
-j&i&-v&u&0&-s\\
-i&-j&-u&-v&s&0
\end{smallmatrix}\right):
a,h,i,j,k,s,t,u,v\in\RR\right\},
\end{equation*}
which is the natural embedding of $\liu(3)$ into $\so(6)$. Note that this 
implies $\mathcal{N}_{\lie{m}}(\lie{n}_2)\cong\liu(3)$. Since $\su(3)$ is a 
subalgebra of $\liu(3)$ this shows that the projection of
$\varphi(\mathcal{N}_\mathfrak{m}(\mathfrak{n}_2))$ onto $\so(6)$ 
contains $\liu(3)$ and $\su(3)$.

If $l=3$ the projection of
\begin{equation*}
\varphi(\mathcal{N}_\mathfrak{m}(\mathfrak{n}_3))=\left\{
\left(\begin{smallmatrix}
0&s-t&-r-u&0&0&0&0&0\\
-s+t&0&q-v&0&0&0&0&0\\
r+u&-q+v&0&0&0&0&0&0\\
0&0&0&0&-q&-r&-s&0\\
0&0&0&q&0&-t&-u&0\\
0&0&0&r&t&0&-v&0\\
0&0&0&s&u&v&0&0\\
0&0&0&0&0&0&0&0
\end{smallmatrix}\right):
q,r,s,t,u,v\in\RR\right\}.
\end{equation*}
onto $\so(5)$ does not contain $\so(5)$.

If $l=4$ the projection of
\begin{equation*}
\varphi(\mathcal{N}_\mathfrak{m}(\mathfrak{n}_4))=\left\{
\left(
\begin{smallmatrix}
0&-t&-u&n&0&0&0&0\\
t&0&-v&-m&0&0&0&0\\u&v&0&i&0&0&0&0\\-n&m&-i&0&0&0&0&0\\
0&0&0&0&0&-t&-u&n\\0&0&0&
0&t&0&-v&-m\\0&0&0&0&u&v&0&i\\0&0&0&0&-n&m&-i&0
\end{smallmatrix}\right):
i,m,n,t,u,v\in\RR\right\}\cong\so(4)
\end{equation*}
onto $\so(4)$ is a Lie algebra isomorphism. This shows furthermore that 
the projection of $\varphi(\mathcal{N}_\mathfrak{m}(\mathfrak{n}_4))$
onto $\so(4)$ also contains $\lisp(1)\oplus\lie{l}_2$ and 
$\lie{l}_2\oplus\lisp(1)$, where $\lie{l}_2\subsetneq\lisp(1)$ is arbitrary.

If $l=5$ the projection of 
\begin{equation*}\varphi(\mathcal{N}_\mathfrak{m}(\mathfrak{n}_5))=
\left\{\left(\begin{smallmatrix}
0&0&0&0&0&0&0&0\\
0&0&-g&-m&l&0&0&0\\
0&g&0&i&-h&0&0&0\\
0&m&-i&0&-q&0&0&0\\
0&-l&h&q&0&0&0&0\\
0&0&0&0&0&0&-g-q&h-m\\
0&0&0&0&0&g+q&0&i+l\\
0&0&0&0&0&-h+m&-i-l&0\end{smallmatrix}\right):
g,h,i,l,m,q\in\RR\right\}\cong\so(4)\end{equation*}
onto $\so(3)$ is surjective. The projection of $f_1(\so(3))\subset\so(4)\cong
\varphi(\mathcal{N}_\mathfrak{m}(\mathfrak{n}_5))$ (i.e., $m=-h$, $i=l$, $g=q$)
onto $\so(3)$ is still surjective, while the projection of
$f_2(\so(3))\subset\so(4)\cong 
\varphi(\mathcal{N}_\mathfrak{m}(\mathfrak{n}_5))$ (i.e., $m=h$, $i=-l$, $g=-q$)
onto $\so(3)$ equals $\{0\}$. Therefore $\lie{m}_H$ is isomorphic to
$\so(4)$ or $\lisp(1)\oplus\lie{l}_2$ for arbitrary 
$\lie{l}_2\subsetneq\lisp(1)$ in this case.

If $l=6$ the projection of 
\begin{equation*}\varphi(\mathcal{N}_\mathfrak{m}(\mathfrak{n}_6))=
\left\{\left(\begin{smallmatrix}
0&-a&-b&-c&-d&-e&0&0\\
a&0&-g&-h&-w&d&0&0\\
b&g&0&-l&-h&c&0&0\\
c&h&l&0&g&-b&0&0\\
d&w&h&-g&0&-a&0&0\\
e&-d&-c&b&a&0&0&0\\
0&0&0&0&0&0&0&e-w-l\\
0&0&0&0&0&0&-e+w+l&0\end{smallmatrix}\right):
a,b,c,d,e,g,h,l,w\in\RR\right\}\end{equation*}
onto $\so(2)$ is surjective. A direct calculation shows furthermore that
\begin{align*}
\Ad\left(\begin{smallmatrix}1&&&&&&&\\&&&&&1&&\\
&&&-1&&&&\\&&-1&&&&&\\&&&&1&&&\\&1&&&&&&\\
&&&&&&1&\\&&&&&&&1\end{smallmatrix}\right)
\left(\varphi(\mathcal{N}_\mathfrak{m}(\mathfrak{n}_6))\right)
&=\left\{\left(\begin{smallmatrix}
0&-e&c&b&-d&-a&0&0\\
e&0&-b&c&a&-d&0&0\\
-c&b&0&l&-g&-h&0&0\\
-b&-c&-l&0&h&-g&0&0\\
d&-a&g&-h&0&w&0&0\\
a&d&h&g&-w&0&0&0\\
0&0&0&0&0&0&0&e-w-l\\
0&0&0&0&0&0&-e+w+l&0\end{smallmatrix}\right)\in\so(8)\right\}\\
&\cong\left\{\left(\begin{smallmatrix}
ie&c-ib&-d+ia&0\\
-c-ib&-il&-g+ih&0\\
d+ia&g+ih&-iw&0\\
0&0&0&-ie+il+iw
\end{smallmatrix}\right)\in\liu(4)\right\}\\
&=\left\{\left(\begin{smallmatrix}
A&0\\
0&-\tr(A)
\end{smallmatrix}\right)\in\liu(4): A\in\liu(3)\right\}\cong\liu(3)
\end{align*}
holds. Therefore $\lie{m}_H$ is any subalgebra of $\liu(3)$ that does
not lie in $\su(3)$. If we denote the projection 
$\liu(3)\to\mathcal{Z}(\liu(3))=\lie{u}(1)$ by $\pi$ then its restriction
to $\lie{m}_H$ is surjective. The kernel $\ker \pi|_{\lie{m}_H}=:\lie{k}_2$
is an ideal in $\lie{m}_H$ and equals $\lie{m}_H\cap\su(3)$. We obtain that
the reductive subalgebra $\lie{m}_H$ of $\liu(3)$ decomposes as
$\lie{m}_H=\lie{k}_1\oplus\lie{k}_2$, where $\lie{k}_1\cong\lie{u}(1)$. Any
subalgebra of $\su(3)$ has at most dimension $8$ and is at most
of rank $2$. The simple algebras of rank $1$ and $2$ are 
$\su(2)\cong\so(3)=\lisp(1)$, $\su(3)$, $\so(5)$ and $\g_2$. Since 
$\dim_\RR(\so(5))=10$ and $\dim_\RR(\g_2)=14$ they can not be subalgebras of 
$\lie{k}_2$. We therefore obtain the following list of possibilities for 
$\lie{m}_H=\lie{k}_1\oplus\lie{k}_2$ up to isomorphism:
\begin{gather*}
\lie{u}(1),\quad\lie{u}(1)\oplus\lie{u}(1),\quad\lie{u}(1)\oplus\lie{u}(1)\oplus
\lie{u}(1)\\
\lie{u}(1)\oplus\su(2),\quad\lie{u}(1)\oplus\lie{u}(1)\oplus\su(2),\quad
\lie{u}(1)\oplus\su(2)\oplus\su(2),\\
\lie{u}(1)\oplus\su(3),
\end{gather*}
i.e., $\lie{m}_H$ is isomorphic to $\lie{u}(1)^k$, $\liu(2)\oplus\lie{l}_1$ or 
$\liu(3)$, where $1\leq k\leq 3$ and $\lie{l}_1\subset\su(2)\cong\lisp(1)$ is 
arbitrary.

If $l=7$ then $\lie{n}_7$ has codimension $1$ in $\lie{n}$. Since 
$\mathcal{N}_{\lie{m}}(\lie{n}_7)=\mathcal{N}_{\lie{m}}(\lie{n}_7^\perp)$ and
$\lie{n}_7^\perp=\RR e_8$ we obtain $\mathcal{N}_{\lie{m}}(\lie{n}_7)
=\mathcal{N}_{\lie{m}}(\RR e_8)=\lie{g}_2$. Due to Proposition 
\ref{Prop:nonredspherical} the subalgebra 
$\lie{m}_H\subset\mathcal{N}_{\lie{m}}(\lie{n}_7)=\g_2$ is arbitrary in this 
case.

To summarize, we have seen that if $\lie{h}$ is a spherical algebraic subalgebra 
of $\g$ then $\lie{m}_H$ has to be conjugate to one of the following, where 
$\lie{l}_1\subset\lisp(1)$ and $\lie{l}_2\subsetneq\lisp(1)$ 
are arbitrary.
\begingroup
\renewcommand*{\arraystretch}{1}
\begin{equation*}\begin{array}{|c|c|}
	\hline
	\,\ l\,\ &\lie{m}_H \\
	\hline
	0&\so(7)\\
	1&\g_2,\\
	2&\liu(3),\ \su(3)\\
	4&\so(4),\ \lisp(1)\oplus\lie{l}_2,\ \lie{l}_2\subset\lisp(1)\\
	5&\so(4),\ \lisp(1)\oplus\lie{l}_2\\
	6&\lie{u}(1)\oplus\lie{k}_2\\
	7&\text{ any subalgebra of }\g_2\\
	\hline
\end{array}\end{equation*}
\endgroup
and $\lie{k}_2$ is isomorphic to $\{0\}$, $\lie{u}(1)^k$ where $1\leq k\leq2$, 
$\su(2)\oplus\lie{l}_1$ or $\su(3)$. Now let us consider the case that 
$\lie{n}_H=\lie{n}_{4,x}$, i.e., $\lie{n}_H=W\oplus\g_{2\alpha}$ and 
$W\subset\g_\alpha=\RR^8$ is the real span of $e_1,e_2,e_3,xe_4+e_8$ for some 
$x\in\RR$. Then $\lie{n}_{4,x}^\perp\subset\g_\alpha$ is the real span of 
$e_5,e_6,e_7,e_4-xe_8$ and
\begin{align*}
&\mathcal{N}_{\so(8)}(W)=\left\{\left(\begin{smallmatrix} 
0&*&*&-v_1&0&0&0&-w_1\\
*&0&*&-v_2&0&0&0&-w_2\\
*&*&0&-v_3&0&0&0&-w_3\\ 
v_1&v_2&v_3&0&z_1&z_2&z_3&0\\
0&0&0&-z_1&0&*&*&-y_1\\
0&0&0&-z_2&*&0&*&-y_2\\ 
0&0&0&-z_3&*&*&0&-y_3\\
w_1&w_2&w_3&0&y_1&y_2&y_3&0\end{smallmatrix}\right)
\in\so(8),
v_i=xw_i, -xz_i=y_i\ \forall\ 1\leq i\leq 3\right\}
\end{align*}
holds. A direct calculation shows
\begin{align*}
\mathcal{N}_\mathfrak{m}(\mathfrak{n}_{4,0})=
\left\{\left(\begin{smallmatrix}
0&-a&-b&0&0&0&0\\a&0&-g&0&0&0&0\\b&g&0&0&0&0&0\\
0&0&0&0&-q&-r&-s\\0&0&0&q&0&-t&-u\\0&0&0&r&t&0&-v\\
0&0&0&s&u&v&0\end{smallmatrix}\right):a,b,g,q,r,s,t,u,v\in\RR\right\}
\cong\so(3)\oplus\so(4)
\end{align*}
and
\begin{align*}
\mathcal{N}_\mathfrak{m}(\mathfrak{n}_{4,x\neq 0})=
\left\{\left(\begin{smallmatrix}
0&-a&-b&0&0&\tfrac{x}{2}(a-t)&\tfrac{x}{2}(b-u)\\a&0&-g&0&-\tfrac{x}{2}
(a-t)&0&\tfrac{x}{2}(g-v)\\b&g&0&0&-\tfrac{x}{2}(b-u)&-\tfrac{x}{2}(g-v)&0\\
0&0&0&0&0&0&0\\0&\tfrac{x}{2}(a-t)&\tfrac{x}{2}(b-u)&0&0&-t&-u\\-\tfrac{x}{2}
(a-t)&0&\tfrac{x}{2}(g-v)&0&t&0&-v\\
-\tfrac{x}{2}(b-u)&\tfrac{x}{2}(g-v)&0&0&u&v&0\end{smallmatrix}\right):a,b,g,t,u
,v\in\RR\right\}
\end{align*}
in the case that $x\neq 0$.
In order to understand the Lie algebra structure of 
$\mathcal{N}_\mathfrak{m}(\mathfrak{n}_{4,x\neq0})$ and the action of 
$\mathcal{N}_\mathfrak{m}(\mathfrak{n}_{4,x})$ on 
$\lie{n}_{4,x}^\perp=W^\perp\subset\RR^8$ for arbitrary $x$ we calculate the 
image of $\mathcal{N}_\mathfrak{m}(\mathfrak{n}_{4,x})$ under $\varphi$.
\begin{align*}
\varphi(\mathcal{N}_\mathfrak{m}(\mathfrak{n}_{4,x}))
&=\begin{cases}
\left\{\left(\begin{smallmatrix}
0&a&b&0&0&0&0&c+d-i\\
-a&0&c&0&0&0&0&-b+e+h\\
-b&-c&0&0&0&0&0&a+f-g\\
0&0&0&0&d&e&f&0\\
0&0&0&-d&0&g&h&0\\
0&0&0&-e&-g&0&i&0\\
0&0&0&-f&-h&-i&0&0\\
-c-d+i&b-e-h&-a-f+g&0&0&0&0&0
\end{smallmatrix}\right)\in\so(8)\right\},&x=0,\vspace{0,2cm}\\
\left\{\left(\begin{smallmatrix}
0&d&e&xa&0&0&0&a\\
-d&0&f&xb&0&0&0&b\\
-e&-f&0&xc&0&0&0&c\\
-xa&-xb&-xc&0&a&b&c&0\\
0&0&0&-a&0&d&e&xa\\
0&0&0&-b&-d&0&f&xb\\
0&0&0&-c&-e&-f&0&xc\\
-a&-b&-c&0&-xa&-xb&-xc&0
\end{smallmatrix}\right):a,b,c,d,e,f\in\RR\right\},&x\neq 0.
\end{cases}
\end{align*}
Let $\pi$ denote the projection of $\mathcal{N}_{\so(8)}(W)$ to 
$\so(W^\perp)\cong\so(4)$ given by
\begin{equation*}
\left(\begin{smallmatrix}
0&*&*&-xg&0&0&0&-g\\
*&0&*&-xh&0&0&0&-h\\
*&*&0&-xj&0&0&0&-j\\
xg&xh&xj&0&a&b&c&0\\
0&0&0&-a&0&d&e&ax\\
0&0&0&-b&-d&0&f&bx\\
0&0&0&-c&-e&-f&0&cx\\
g&h&j&0&-ax&-bx&-cx&0
\end{smallmatrix}\right)
\mapsto
\left(\begin{smallmatrix}
0&0&0&0&0&0&0&0\\
0&0&0&0&0&0&0&0\\
0&0&0&0&0&0&0&0\\
0&0&0&0&a&b&c&0\\
0&0&0&-a&0&d&e&ax\\
0&0&0&-b&-d&0&f&bx\\
0&0&0&-c&-e&-f&0&cx\\
0&0&0&0&-ax&-bx&-cx&0
\end{smallmatrix}\right).
\end{equation*}
Then its restriction $\pi|_{\varphi(\mathcal{N}_{\lie{m}}(\lie{n}_{4,x}))}:
\varphi(\mathcal{N}_{\lie{m}}(\lie{n}_{4,x}))\to\so(W^\perp)\cong\so(4)$ is a 
surjective Lie algebra homomorphism which is an isomorphism if $x\neq 0$. 
This implies that the action of $\mathcal{N}_\lie{m}(\lie{n}_{4,x})$ on 
$W^\perp\cong\RR^4$ is the standard action of $\so(4)$ on $\RR^4$. Hence 
$\mathcal{N}_{M}(\lie{n}_{4,x})$ acts transitively on the spheres in $W^\perp$ 
and the only proper subgroups of $N_M(\lie{n}_{4,x})$ that act transitively on 
the spheres in $W^\perp$ have Lie algebras that lie in the preimage
\begin{align*}
\left(\pi|_{\varphi(\mathcal{N}_\lie{m}(\lie{n}_{4,x}))}\right)^{-1}(\underbrace
{\lisp(1)\oplus\lie{l}_2}_{\subset\so(4)})\ 
\text{  or  }
\left(\pi|_{\varphi(\mathcal{N}_\lie{m}(\lie{n}_{4,x}))}\right)^{-1}(\underbrace
{\lie{l}_2\oplus\lisp(1)}_{\subset\so(4)}),
\end{align*}
where $\lie{l}_2\subset\lisp(1)$ is arbitrary.
\end{proof}

\end{document}